\theoremstyle{plain}
\newtheorem{theorem}{Theorem}[section]
\newtheorem{lemma}[theorem]{Lemma}
\theoremstyle{definition}
\theoremstyle{remark}
\newtheorem{remark}[theorem]{Remark}
\newcommand{\N}{\mathbb{N}}
\newcommand{\R}{\mathbb{R}}
\newcommand{\ind}[1]{\mathbbm{1}_{\left\{#1\right\}}}
\numberwithin{equation}{section}
\DeclareMathOperator{\E}{\mathbb{E}}
\renewcommand{\P}{\mathbb{P}}
\newcommand{\az}{\alpha}
\newcommand{\dd}{\mathrm{d}}
\renewcommand{\bar}[1]{\overline{#1}}
\newcommand{\egaldistr}{{\overset{(d)}{=}}}
\renewcommand{\tilde}[1]{\widetilde{#1}}
\renewcommand{\rho}{\varrho}
\renewcommand{\epsilon}{\varepsilon}
\title{Branching Brownian motion conditioned on small maximum}
\author{Xinxin Chen \quad Hui He \quad Bastien Mallein}
\date{\today}
\begin{document}

\maketitle

\begin{abstract}
We consider a standard binary branching Brownian motion on the real line. It is known that the maximal position $M_t$ among all particles alive at time $t$, shifted by $m_t = \sqrt{2} t - \frac{3}{2\sqrt{2}} \log t$ converges in law to a randomly shifted Gumbel variable. Derrida and Shi \cite{DS} conjectured the precise asymptotic behaviour of the corresponding lower deviation probability $\P(M_t \leq \sqrt{2}\alpha t)$ for $\alpha < 1$. We verify their conjecture, and describe the law of the branching Brownian motion conditioned on having a small maximum.
\end{abstract}

\section{Introduction}

We consider a one-dimensional standard binary branching Brownian motion. It is a continuous-time particle system on the real line which is constructed as follows. It starts with one individual located at the origin at time $0$ that moves according to a standard Brownian motion. After an independent exponential time of parameter $1$, the initial particle dies and gives birth to 2 children that start on the position their parent occupied at its death. These 2 children then move according to independent Brownian motions and give birth independently to their own children at rate 1. The particle system keeps evolving in this fashion for all time.

For all $t \geq 0$, we denote by $N(t)$ the collection of the individuals alive at time $t$. For any $u\in N(t)$ and $s \leq t$, let $X_u(s)$ denote the position at time $s$ of the individual $u$ or its ancestor alive at that time. The maximum of the branching Brownian motion at time $t$ is defined as
$\displaystyle
  M_t:=\max\{X_u(t): u\in N(t)\}.
$

The asymptotic behaviour of $M_t$ as $t \to \infty$ has been subjected to intense study, partly due to its link to the F-KPP reaction-diffusion equation, defined as
\begin{equation}
  \label{eqn:deffkpp}
  \partial_t u = \frac{1}{2} \Delta u -  u(1-u).
\end{equation}
Precisely, the function $(t,x) \mapsto u(x,t) = \P(M_t \leq x)$ is known to be the solution of \eqref{eqn:deffkpp} with initial condition $u(x,0) = \ind{x > 0}$.

It was proved by Bramson \cite{Bra78} that for all $z \in \R$,
\begin{equation}
  \label{eqn:bra}
  \lim_{t \to \infty} \P(M_t\leq m_t+z) = \lim_{t \to \infty} u(m_t + z, t) = w(z),
\end{equation}
where $m_t=\sqrt{2}t-\frac{3}{2\sqrt{2}}\log t$ and $w$ is the slowest travelling wave solution of the F-KPP equation, which is solution of the differential equation
\[
  \frac{1}{2} w'' + \sqrt{2} w' + w - w^2 = 0.
\]
Observe that $(t,x) \mapsto w(x-\sqrt{2}t)$ is a solution to \eqref{eqn:deffkpp}. Lalley and Sellke \cite{LS87} later provided the following representation for $w$ as
\begin{equation}
  \label{eqn:ls}
  w(z):=\E[e^{-C_0e^{-\sqrt{2}z}D_\infty}],
\end{equation}
where $C_0>0$ is a constant and $D_\infty$ is an a.s.\@ positive random variable, defined as the almost sure limit of the so-called derivative martingale, defined for all $t \geq 0$ by
\[
  D_t:=\sum_{u\in N(t)}(\sqrt{2}t-X_u(t))e^{\sqrt{2}X_u(t)-2t}.
\]

The upper large deviations of the branching Brownian motion (i.e.\@ estimating the asymptotic decay of $\P(M_t \geq \sqrt{2}\alpha t)$ for $\alpha \geq 1$) were first investigated by Chauvin and Rouault \cite{CR88,CR90}, who obtained tight estimates of $\P(M_t \geq \sqrt{2}\alpha t)$ for $\alpha \geq 1$. It is now known (cf.\@ e.g.\@ \cite[Lemma 9.7]{Bo17}) that
\begin{equation}
  \P(M_t \geq \sqrt{2}\alpha t) \sim
  \begin{cases}
    \frac{\Upsilon(\alpha)}{\sqrt{4\pi\alpha}} t^{-1/2}e^{- (\alpha^2-1)t}, &\text{if }\alpha >  1,\\[9pt]
    \frac{3C_0}{2\sqrt{2}} t^{-3/2} \log t, &\text{if }\alpha = 1,
  \end{cases}
  \text{ as } t \to \infty.
\end{equation}
Here $C_0$ is the same constant as in \eqref{eqn:ls}, and $\Upsilon$ is a non-decreasing bounded function on $(1,\infty)$ that can be rewritten as the probability for a Brownian motion with drift $\alpha -1$ to stay above a random barrier \cite[Theorem 1.2]{BBCM20+}. Similar tight estimates were recently obtained for the upper large deviations of branching random walks \cite{Bura,GH18}.

The aim of this article is to obtain precise lower deviations estimates for the maximum of the branching Brownian motion, i.e.\@ the asymptotic behaviour of the probability $\P(M_t\leq \sqrt{2}\alpha t)$ for all $\alpha<1$. The same question recently popped up in the context of branching random walks \cite{GH18,CH20}. Derrida and Shi \cite{DS16} obtained the following estimates on the exponential decay
\[
  \P(M_t\leq \sqrt{2}\alpha t)=
  \begin{cases}
    e^{-2(\sqrt{2}-1)(1-\alpha )t+o(t)},&\textrm{ for } 1-\sqrt{2}<\alpha<1,\\
    e^{-(1+\alpha^2) t+o(t)},&\textrm{ for }\alpha\leq 1-\sqrt{2},
  \end{cases}
  \quad \text{ as } t \to \infty.
\]
In particular, some transition occurs at $\alpha_c = -\gamma := 1 -\sqrt{2} \approx -0.414$, where the large deviations rate function exhibits a second order phase transition (cf.\@ Figure~\ref{fig:rateFunction}). Derrida and Shi \cite{DS} also conjectured the existence of a positive constant $C^{(1)}$ such that for all $\alpha < 1$,
\begin{equation}
  \label{eqn:dsconjecture}
  \P(M_t\leq \sqrt{2}\alpha t) \sim
  \begin{cases}
    C^{(1)}(\frac{\alpha -\alpha_c}{\sqrt{2}})^{\frac{3\gamma}{2}} t^{\frac{3\gamma}{2}} e^{-2\gamma (1-\alpha)t}, &\textrm{ if }\alpha>\alpha_c,\\
    &\\[-7pt]
    \frac{\Phi(\alpha)}{\sqrt{4\pi}}t^{-\frac{1}{2}}e^{-(1+\alpha^2)t}, &\textrm{ if }\alpha<\alpha_c,
\end{cases}
\end{equation}
where
\begin{equation}
  \label{eqn:defPhi}
  \Phi(\alpha) =: -\frac{1}{\alpha}+\sqrt{2}\int_0^\infty \dd s\int_{\R}\dd y e^{(1-\alpha^2)s+\sqrt{2}\alpha y} u(y,s)^2 \in (0,\infty).
\end{equation}
We shall prove in Section~\ref{max-} that $\Phi(\alpha)$ is finite.

\begin{figure}
\centering
\begin{tikzpicture}[yscale = 0.34]
  \draw[->] (-4,0) -- (4,0) node[above] {$\alpha$};
  \draw[->] (0,-0.6) -- (0,12) node[right] {$\Psi(\alpha)$};
  \draw[blue, thick, domain=1:3.5] plot ({\x}, {(\x*\x-1)});
  \draw (1,-0.2) node[below] {$1$} -- (1,0.2);
  \draw[blue, thick, domain=-0.414:1] plot ({\x}, {2*0.414*(1-\x)});
  \draw (-.414,-0.2) node[below] {$-\gamma$} -- (-.414,0.2);
  \draw[blue, thick, domain=-3.15:-.414] plot ({\x}, {\x*\x+1});
\end{tikzpicture}
\caption{Rate function  for the maximal displacement of the branching Brownian motion, defined as $\Psi(\alpha) = \lim_{t \to \infty} \frac{1}{t} \log \P(M_t \approx \sqrt{2}\alpha t)$ (c.f.\@ \cite{DS}). Note the second order phase transition occurring at position $-\gamma = 1 - \sqrt{2}$.}
\label{fig:rateFunction}
\end{figure}
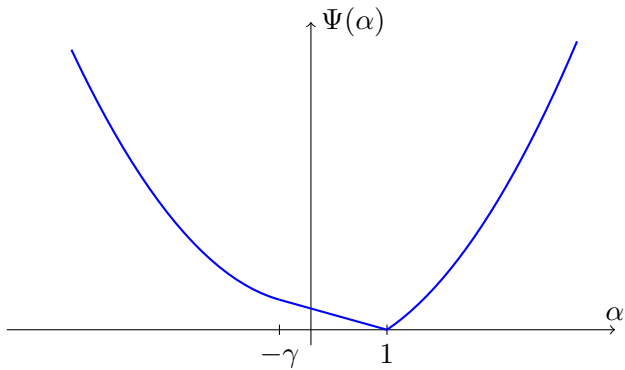

In this work, we prove the conjecture \eqref{eqn:dsconjecture} of Derrida and Shi. Additionally, we obtain the precise asymptotic decay of $\P(M_t \leq \sqrt{2}\alpha t)$ in the critical case $\alpha=1-\sqrt{2}$ as well. We also describe the law of the branching Brownian motion conditioned on the large deviation event $\{M_t\leq \sqrt{2}\alpha t\}$ for all $\alpha<1$, exhibiting the typical behaviour of a branching Brownian motion realizing this large deviation.

This behaviour is governed by the value of $\tau$ the first branching time, defined as the time at which the initial ancestor of the process dies, i.e.
\[\tau:=\inf\{t\geq0: \#N(t)\geq2\}.\]
On the event $\{M_t \leq \sqrt{2}\alpha t\}$, $\tau$ will typically be of order $t$ and, up to some normalization, will converge in distribution as $t \to \infty$. Moreover, the position  $X_\emptyset(\tau)$ at which the initial particle gives birth to children will be, given $\tau$, tight around its median, which is located in a neighbourhood of $-c \tau$ for some $c > 0$.

We also describe, under the probability $\P(\cdot\vert M_t\leq\sqrt{2}\alpha t)$, the asymptotic behaviour of the point measure
\[
\mathcal{E}_t(\alpha):=\sum_{u\in N(t)}\delta_{X_u(t)-\sqrt{2}\alpha t},\quad t\geq0,
\]
which is the extremal process at time $t$ of the conditioned branching Brownian motion, i.e.\@ the position of particles that are within distance $O(1)$ from the maximal position.

The extremal process of the branching Brownian motion (without conditioning) has been previously studied by Arguin, Bovier and Kistler \cite{ABK13} and Aïdékon, Berestycki, Brunet and Shi \cite{ABBS13}. Writing $\mathcal{E}_t:=\sum_{u\in N(t)}\delta_{X_u(t)-m_t}$ the extremal process of the branching Brownian motion, they showed that as $t\to\infty$,
\[
  (\mathcal{E}_t, M_t-m_t)\Longrightarrow(\mathcal{E}, \max_{x\in\mathcal{E}}x),
\]
where $\mathcal{E}$ is a randomly shifted decorated Poisson point process with exponential intensity, and $\Longrightarrow$ denotes convergence in distribution. More precisely, this point process can be constructed as
\[ \mathcal{E} :=\sum_{x\in \mathcal{P}}\sum_{y\in \mathcal{D}_x}\delta_{x+y},\]
where $C_0, D_\infty$ are the quantities defined in \eqref{eqn:ls}, and conditioned on $D_\infty$, $\mathcal{P}$ is a Poisson point process with intensity $C_0\sqrt{2} D_\infty e^{-\sqrt{2}x}\dd x$ and conditioned on $\mathcal{P}$, $(\mathcal{D}_x, x\in\mathcal{P})$ are i.i.d.\@ decorated point processes in $(-\infty,0]$ with an atom at $0$, which we refer to as the decoration of the branching Brownian motion. In particular, observe that
\[
  \max \mathcal{E} = \max \mathcal{P} \egaldistr \frac{1}{\sqrt{2}} \left(G- \log (C_0 \sqrt{2}D_\infty)\right),
\]
where $\max \mathcal{A}$ is the largest position occupied by an atom of the point process $\mathcal{A}$, and $G$ is a random variable independent of $D_\infty$ with standard Gumbel distribution. Therefore the distribution function of $\max \mathcal{E}$ is exactly $w(\cdot)$, in view of \eqref{eqn:ls}.

\begin{remark}
In this article, we choose to focus on branching Brownian motions with binary branching, to keep the proofs as simple as possible. Up to minor changes, one can assume the number of children made by an individual at death to be i.i.d.\@ integer-valued random variables with same law as $L$. As long as $L \geq 1$ a.s.\@ (i.e.\@ the process never gets extinct) and $\E(L (\log L)^2) < \infty$ (an integrability condition guaranteeing the non-degeneracy of the limit $D_\infty$), we expect similar results to hold.
\end{remark}

Before stating our main result, we quickly recall the heuristics given in \cite{DS16} to explain the asymptotic decay of $\P(M_t \leq \sqrt{2}\alpha t)$ in \eqref{eqn:dsconjecture} in the next section.

\subsection{Heuristics behind the conjecture \texorpdfstring{\eqref{eqn:dsconjecture}}{}}

Recall that $\tau$ is the first branching time of the process and $X_\emptyset(\tau)$ the position of the particle at that first branching time. As particles behave independently after they branched, the probability of observing an unusually low maximum decays sharply after each branching event. Therefore, to maximize the possibility that $M_t \leq \sqrt{2}\alpha t$, a good strategy is to make the first branching time as large as possible. Recalling that $\P(\tau > s) = e^{-s}$ and that we expect exponential decay in $t$, it is reasonable to conjecture that $\tau \approx \lambda_\alpha t$ conditioned on $\{M_t \leq \sqrt{2}\alpha t\}$, for some $\lambda_\alpha \in [0,1]$. Additionally, after that branching time, particle should behave as regular branching Brownian motions with length $t-\tau$, therefore the maximal position at time $t$ should be around level $X_\emptyset(\tau) + \sqrt{2}(t-\tau)$, which has to be lower than $\sqrt{2}\alpha t$. We also have the condition $X_\emptyset(\tau) \leq \sqrt{2}\alpha t + \sqrt{2}(\tau-t)$.

Then, with $B$ a standard Brownian motion, observe that
\begin{align*}
  \P(\tau \approx \lambda t, X_\emptyset(\tau) \leq \sqrt{2}\alpha t+\sqrt{2}(\tau -t)) &\approx e^{-\lambda t} \P(B_{\lambda t} \leq  \sqrt{2}\alpha t + \sqrt{2}(\lambda - 1)t)\\
  &\approx \exp\left( - t \left( \lambda + \frac{(\alpha + (\lambda-1))^2}{\lambda} \right) \right).
\end{align*}
Thus, to maximize this probability, one has to choose the parameter $\lambda_\alpha \in [0,1]$ that minimizes the quantity
\[
  \lambda + \frac{(\alpha - (1-\lambda))^2}{\lambda}.
\]
Note that if $\alpha > -\gamma = 1 - \sqrt{2}$, this minimum is attained for $\lambda_\alpha = \frac{(1-\alpha)}{\sqrt{2}} \in [0,1]$, whereas if $\alpha \leq \gamma$, this minimum is attained at $\lambda_\alpha = 1$.

As a result, we expect three different behaviours for the branching Brownian motion conditioned on having a maximum smaller than $\sqrt{2}\alpha t$, depending on whether $\alpha$ is larger than, smaller than, or equal to $-\gamma$. In the first case, the branching time should happen at some intermediate time in the process, and the branching Brownian motion after this first branching time should behave as a regular process, conditioned on an event of positive probability.  If $\alpha < -\gamma$, then one expects the process not to branch until the very end of the process, which allows an explicit description of the extremal process in that case. In the intermediate case $\alpha = - \gamma$, the branching time should be such that $t - \tau$ is large, but negligible with respect to $t$. In this setting, the behaviour of the process after that time should not be too different from the case $\alpha > -\gamma$.

This heuristics describing the lower large deviations for the maximal displacement closely match the one used for lower deviations of similar branching processes. For example, in \cite{vanish}, the law of a Galton-Watson process conditioned on the limiting martingale being small is described as a Galton-Watson process with minimal branching until given generation, then behave as typical process after that generation. Similarly, in \cite{CH20}, a branching random walk with an unusually small minimum is described as a process in which particles make as few children as possible during the first few  branches in as few children as possible, which all drift to a low position, from which they start independent unconditioned branching random walks.

\subsection{Main results}

We now state our main results, which completely validate the above heuristics of \cite{DS16}. With careful analysis of the first branching time and position, we are able to give an equivalent for the lower large deviations of the maximal displacement. We are also able to describe exactly the joint convergence in law of $(\tau, X_\emptyset(\tau), M_t,\mathcal{E}_t(\alpha))$ conditioned on $\{M_t \leq \sqrt{2}\alpha t\}$.

We begin with the case $\alpha > -\gamma$.
\begin{theorem}\label{highmax}
Assume that $\alpha\in(-\gamma,1)$. Then, as $t\to\infty$, we have
\begin{equation}\label{lowerprob+}
\P(M_t\leq\sqrt{2}\alpha t)\sim C^{(1)}(v_\alpha t)^{\frac{3\gamma}{2}}e^{-2\gamma(1-\alpha )t},
\end{equation}
where $C^{(1)}:=\frac{1}{2}\int_\R e^{-\sqrt{2}\gamma z}w(z)^2\dd z\in(0,\infty)$ and $v_\alpha:=\frac{\gamma+\alpha}{\sqrt{2}}\in(0,1)$. Furthermore, conditioned on $\{M_t\leq\sqrt{2}\alpha t\}$,
\begin{equation}\label{jointcvg+}
\left(\frac{\tau-\frac{(1-\alpha)}{\sqrt{2}} t}{\sqrt{t\tfrac{(1-\alpha)}{4\sqrt{2}}}}, X_{\emptyset}(\tau)-(\sqrt{2}\alpha t-m_{t-\tau}), M_t-\sqrt{2}\alpha t\right)\Longrightarrow(\xi, -\chi, -E),
\end{equation}
where $\xi$ and $(\chi, E)$ are independent, with $\xi$ a standard Gaussian random variable and $E$ an exponential variable with parameter $\sqrt{2}\gamma$. The joint distribution of $(\chi, E)$ is given by
\[
  \P(\chi\leq x, E\geq y)=\frac{1}{2C^{(1)}}e^{-\sqrt{2}\gamma y} \int_{-\infty}^{x-y}e^{-\sqrt{2}\gamma z}w(z)^2\dd z, \quad x \in \R,\  y \in \R_+.
\]
Moreover, we have, jointly with the convergence in \eqref{jointcvg+},
\begin{equation}\label{processcvg+}
\mathcal{E}_t(\alpha)\Longrightarrow \mathcal{E}^-:= \sum_{x\in \mathcal{E}_1\cup \mathcal{E}_2}\delta_{x-\chi},
\end{equation}
where given $\chi$, $\mathcal{E}_1$ and $\mathcal{E}_2$ are i.i.d.\@ point processes distributed as $\mathcal{E}$ conditioned on $\{\max\mathcal{E}\leq\chi\}$.
\end{theorem}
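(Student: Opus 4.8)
The plan is to compute $\P(M_t \le \sqrt 2 \alpha t)$ by decomposing on the first branching time $\tau$ and the position $X_\emptyset(\tau)$. Conditionally on $\tau = s$ and $X_\emptyset(s) = x$, the two children launch independent branching Brownian motions of length $t-s$ from $x$, so by the Markov branching property
\[
  \P(M_t \le \sqrt 2 \alpha t \mid \tau = s,\, X_\emptyset(s) = x) = u(\sqrt 2 \alpha t - x,\, t-s)^2,
\]
where $u(y,r) = \P(M_r \le y)$. Since $\tau$ is exponential of parameter $1$ and, conditionally on $\tau = s$, $X_\emptyset(s)$ is a centered Gaussian of variance $s$, integrating against the joint density $e^{-s}\frac{1}{\sqrt{2\pi s}}e^{-x^2/(2s)}\,\dd s\,\dd x$ (together with the event $\{\tau > t\}$, whose contribution is the exponentially smaller $e^{-t}\P(B_t \le \sqrt 2 \alpha t)$ and is shown to be negligible) gives an \emph{exact} formula
\[
  \P(M_t \le \sqrt 2 \alpha t) = e^{-t}\P(B_t \le \sqrt 2\alpha t) + \int_0^t \dd s\, e^{-s} \int_\R \frac{\dd x}{\sqrt{2\pi s}} e^{-x^2/(2s)}\, u(\sqrt 2\alpha t - x,\, t-s)^2 .
\]
Everything reduces to a Laplace-type asymptotic analysis of this integral.

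Next I would perform the change of variables suggested by the heuristic: write $s = \frac{1-\alpha}{\sqrt 2} t + \sqrt{t}\,\sigma\, \theta$ with $\sigma^2 = \frac{1-\alpha}{4\sqrt 2}$ (the Gaussian window for $\tau$), and write $x = \sqrt 2\alpha t - m_{t-s} - z$, so that the argument of $u$ becomes $m_{t-s} + z$, at which scale Bramson's convergence \eqref{eqn:bra} yields $u(\sqrt 2\alpha t - x, t-s) = u(m_{t-s}+z, t-s) \to w(z)$. One then extracts the deterministic exponential and polynomial prefactors: the product $e^{-s}\cdot \frac{1}{\sqrt{2\pi s}} e^{-x^2/(2s)}\cdot (\text{Jacobian})$ is expanded around the saddle point $s \approx \frac{1-\alpha}{\sqrt 2}t$, $x \approx -\frac{\gamma + \alpha}{\sqrt 2}\sqrt 2 t$... — here the exponent $\lambda + \frac{(\alpha-(1-\lambda))^2}{\lambda}$ evaluated at $\lambda_\alpha = \frac{1-\alpha}{\sqrt 2}$ produces the rate $2\gamma(1-\alpha)$, the Gaussian fluctuation in $\theta$ integrates to a constant, the fluctuation in $z$ produces the factor $e^{-\sqrt 2 \gamma z}$ multiplying $w(z)^2$, and the polynomial correction from Taylor-expanding $m_{t-s}$ and the $e^{-x^2/(2s)}$ term to second order produces exactly the power $(v_\alpha t)^{3\gamma/2}$. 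Carrying this out gives \eqref{lowerprob+} with $C^{(1)} = \frac12 \int_\R e^{-\sqrt 2\gamma z} w(z)^2\,\dd z$, and finiteness of $C^{(1)}$ follows from the tail bounds $w(z) \to 1$ as $z\to\infty$ and $w(z) \le C e^{\sqrt 2 z}$ (hence $w(z)^2 \le Ce^{2\sqrt 2 z}$) as $z \to -\infty$, against which $e^{-\sqrt 2\gamma z}$ with $0 < \gamma < 1$ is integrable on both ends.

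For the joint convergence \eqref{jointcvg+}, the same computation applied to $\P(\tau \in A,\, X_\emptyset(\tau) - (\sqrt2\alpha t - m_{t-\tau}) \in B,\, M_t - \sqrt 2\alpha t \le -y)$ localizes $\tau$ in the Gaussian window (giving the standard Gaussian limit $\xi$, which decouples because it only affects the saddle-point prefactor and not the $z$- or $y$-dependence), while the joint law of $(-\chi, -E)$ emerges from the inner integral: restricting to $\{M_t \le \sqrt 2\alpha t - y\}$ replaces one factor $u(m_{t-s}+z, t-s)$ by $u(m_{t-s}+z-y, t-s) \to w(z-y)$, and one checks $\int_\R e^{-\sqrt 2\gamma z} w(z)w(z-y)\,\dd z = e^{-\sqrt2\gamma y}\int_\R e^{-\sqrt2\gamma z'}w(z')^2\,\dd z'$... — more carefully one keeps track of where the shift by $-\chi$ hits both children simultaneously, which is what produces the stated distribution of $(\chi, E)$ with its exponential-$(\sqrt2\gamma)$ marginal for $E$. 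Finally, \eqref{processcvg+} follows by upgrading Bramson's theorem to the convergence of the extremal process: conditionally on $\tau$, $X_\emptyset(\tau)$ and on $z$, each child's shifted point process $\sum_u \delta_{X_u(t) - m_{t-\tau} - X_\emptyset(\tau)}$ converges to $\mathcal E$ conditioned on $\{\max \mathcal E \le z\}$ by the Arguin–Bovier–Kistler / Aïdékon–Berestycki–Brunet–Shi result together with the observation that conditioning on the small-maximum event tilts the law of $\mathcal E$ precisely to $\{\max \mathcal E \le \chi\}$, and the two children are asymptotically independent; superposing the two gives $\mathcal E^-$.

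I expect the main obstacle to be making the Laplace/saddle-point analysis of the double integral rigorous \emph{uniformly in $z$} (and in the shift $y$): one needs quantitative control on $u(m_r + z, r) - w(z)$ and on $u$ in the tails (both $z \to +\infty$, where $u \to 1$ slowly, and $z \to -\infty$, where $u$ is doubly-exponentially small) that is strong enough to justify dominated convergence after the change of variables and to discard the contributions of $\tau$ far from $\lambda_\alpha t$ and of $x$ far from its typical value. Bramson-type estimates on the F-KPP equation, such as the sharp bounds of \cite{Bra78} and the refinements used in \cite{ABK13,ABBS13}, should supply exactly what is needed, but assembling them into the required uniform bounds is the technical heart of the argument.
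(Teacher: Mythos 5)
Your proposal follows the same strategy as the paper: decompose at the first branching time via \eqref{key}, discard the negligible no-branching contribution, change variables to centre $X_\emptyset(\tau)$ around $\sqrt{2}\alpha t - m_{t-\tau}$, invoke the uniform Bramson convergence $u(m_r+z,r)\to w(z)$, and apply Laplace's method around the saddle $t-\tau \approx v_\alpha t$ with Gaussian fluctuations of variance $\frac{1-\alpha}{4\sqrt{2}}$. The paper organises this exactly as you anticipate, with Lemma~\ref{badtime+badposition} (built on the a priori bounds of Lemmas~\ref{DSbdu} and~\ref{CHbdu}) providing the uniform-in-$z$ localisation of the double integral that you correctly flag as the technical heart, and Lemma~\ref{goodtime+position} computing the limit in the localised window; the process convergence \eqref{processcvg+} is then obtained by repeating the argument with Laplace functionals, just as you sketch. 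One slip worth correcting: the tail behaviour of the travelling wave as $z\to-\infty$ is $w(z)\sim C e^{\sqrt{2}\gamma z}$ (cf.\ \cite{ABK11}), not $w(z)\le C e^{\sqrt{2}z}$; since $\sqrt{2}\gamma = 2-\sqrt{2}<\sqrt{2}$, the decay is \emph{slower} than you claim and your stated inequality fails for $z$ very negative, although the finiteness of $C^{(1)}$ still follows because $e^{-\sqrt{2}\gamma z}w(z)^2 = O(e^{\sqrt{2}\gamma z})$ as $z\to-\infty$.
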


\begin{remark}
The finiteness of the constant $C^{(1)}$ defined above can be checked using that $w(z)\sim C e^{\sqrt{2}\gamma z}$ as $z\to-\infty$ (cf. e.g. \cite{ABK11}).
\end{remark}

We now consider the case $\alpha< - \gamma$. In this setting, the total number of particles in the process at time $t$ remains tight, allowing the following description of the process, conditioned on the large deviations event.

\begin{theorem}\label{lowmax}
Assume that $\alpha\in(-\infty,-\gamma)$. Then, as $t\to\infty$, we have
\begin{equation}\label{lowerprob-}
\P(M_t\leq \sqrt{2}\alpha t)\sim  \frac{\Phi(\alpha)}{\sqrt{4\pi}} t^{-\frac{1}{2}}e^{-(1+\alpha^2)t}.
\end{equation}
Moreover, conditioned on $\{M_t\leq\sqrt{2}\alpha t\}$,
\begin{equation}\label{jointcvg-}
\left(t-t\wedge \tau, \sqrt{2}\alpha t-X_\emptyset(t \wedge \tau), M_t - \sqrt{2}\alpha t\right)\Longrightarrow(\xi_\alpha,-\chi_\alpha, -E_\alpha),
\end{equation}
where $\xi_\alpha$ is distributed as
\[
  \frac{1}{-\alpha\Phi(\alpha)}\delta_0(\dd s)+\frac{1}{\Phi(\alpha)}\int_{\R}e^{\sqrt{2}\alpha z+(1-\alpha^2)s}u(z,s)^2\dd z \dd s,
\]
$E_\alpha$ is distributed as an exponential random variable with parameter $-\sqrt{2}\alpha$, and the joint distribution of $(\xi_\alpha,\chi_\alpha, E_\alpha)$ is given by
\begin{multline*}
  \P(\xi_\alpha\leq x_1, \chi_\alpha\leq x_2, E_\alpha\geq x_3)\\
  = \frac{1}{\Phi(\alpha)}\Big(\ind{x_3<x_2}\int_{x_3}^{x_2}\sqrt{2}e^{\sqrt{2}\alpha z}\dd z \qquad\qquad\qquad\qquad\qquad\qquad\\
  +\sqrt{2}\int_0^{x_1} \dd s\int_{-\infty}^{x_2-x_3} e^{\sqrt{2}\alpha (x_3+z)+(1-\alpha^2)s}u(z,s)^2\dd z\Big),
\end{multline*}
for any $x_1, x_3\in\R_+$ and $x_2\in\R$. Further, we have jointly
\begin{equation}\label{processcvg-}
  \mathcal{E}_t(\alpha)\Longrightarrow \mathcal{E}_\infty(\alpha) :=\delta_{-\chi_\alpha} \ind{\xi_\alpha = 0}+\ind{\xi_\alpha > 0}\sum_{x\in \mathcal{B}_1\cup\mathcal{B}_2} \delta_{x-\chi_\alpha},
\end{equation}
where given $(\xi_\alpha, \chi_\alpha)$, $\mathcal{B}_1$ and $\mathcal{B}_2$ are i.i.d.\@ copies of $\sum_{u\in N(\xi_\alpha)}\delta_{X_u(\xi_\alpha)}$ conditioned on $\{M_{\xi_\alpha}\leq \chi_\alpha\}$.
\end{theorem}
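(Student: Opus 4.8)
The plan is to decompose on the first branching time $\tau$, distinguishing whether $\tau > t$ (no branching at all before time $t$) or $\tau \le t$. In the first case the process at time $t$ is a single Brownian particle, and $\P(\tau > t, B_t \le \sqrt{2}\alpha t)$ is an exact Gaussian computation: since $\P(\tau > t) = e^{-t}$ and $\P(B_t \le \sqrt 2\alpha t) \sim \frac{1}{-\sqrt 2\alpha\sqrt{4\pi t}} e^{-\alpha^2 t}$, this contributes $\frac{1}{-\alpha}\cdot\frac{1}{\sqrt{4\pi}}t^{-1/2}e^{-(1+\alpha^2)t}$, accounting for the $-1/\alpha$ term in $\Phi(\alpha)$. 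In the second case, condition on $\tau = s \le t$ and on the branching position $X_\emptyset(s) = y$ (with density $e^{-s}$ times the Brownian transition density $p_s(0,y)$); after $s$ the two children launch independent branching Brownian motions of length $t-s$ started from $y$, so by the Markov/branching property the conditional probability that $M_t \le \sqrt 2\alpha t$ equals $u(\sqrt 2\alpha t - y, t-s)^2$. This gives
\[
  \P(M_t \le \sqrt 2\alpha t,\ \tau \le t) = \int_0^t e^{-s}\diff s \int_\R p_s(0,y)\, u(\sqrt 2\alpha t - y, t-s)^2 \diff y,
\]
and after the change of variables $y = \sqrt 2\alpha t + \sqrt 2\alpha(s-t) + z$ (so that $z$ tracks the displacement of the maximum below $\sqrt 2\alpha t$, which is $O(1)$) one should recover, as $t\to\infty$, the double integral in the definition \eqref{eqn:defPhi} of $\Phi(\alpha)$, up to the common prefactor $\frac{1}{\sqrt{4\pi}}t^{-1/2}e^{-(1+\alpha^2)t}$. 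Adding the two contributions yields \eqref{lowerprob-}.

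To make this rigorous and simultaneously extract the joint convergence \eqref{jointcvg-} and the extremal process limit \eqref{processcvg-}, I would work with the un-normalized (sub-probability) measure and test functions: for bounded continuous $f$ and a test functional $F$ of the point measure, write
\[
  \E\!\left[F(\mathcal{E}_t(\alpha))\, g\!\left(t - t\wedge\tau,\, \sqrt 2\alpha t - X_\emptyset(t\wedge\tau),\, M_t - \sqrt 2\alpha t\right)\ind{M_t \le \sqrt 2\alpha t}\right]
\]
and split as above. On $\{\tau > t\}$ the point measure is the single atom $\delta_{B_t - \sqrt 2\alpha t}$, and conditioning $B_t$ on being $\le \sqrt 2\alpha t$ makes $\sqrt 2\alpha t - B_t$ converge (after the Gaussian tail estimate) to an exponential variable with parameter $-\sqrt 2\alpha$; this produces the $\ind{\xi_\alpha = 0}$ component of $\mathcal{E}_\infty(\alpha)$ and the atomic part $\frac{1}{-\alpha\Phi(\alpha)}\delta_0$ of the law of $\xi_\alpha$. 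On $\{\tau \le t\}$, the key input is Bramson's convergence \eqref{eqn:bra}: since $t - s \to \infty$ on the relevant region, $u(\sqrt 2\alpha t - y, t-s)$ together with the extremal process of each length-$(t-s)$ subtree converges to $w(\cdot)$ and to $\mathcal{E}$ conditioned on $\{\max\mathcal{E} \le \cdot\}$ — but here one must be careful, because the displacement of the maximum is measured relative to $\sqrt 2\alpha t$, not relative to $m_{t-s}$, and $\sqrt 2\alpha t - m_{t-s} \to +\infty$. The correct statement is rather that, because the barrier $\sqrt 2 \alpha t - y$ sits far above $m_{t-s}$ only on a set of negligible contribution, the dominant contribution comes from $y$ such that $\sqrt 2\alpha t - y$ stays $O(1)$, i.e. $X_\emptyset(\tau) \approx \sqrt 2\alpha t + \sqrt 2\alpha(\tau - t)$, matching the median location $-c\tau$ with $c = -\sqrt 2\alpha - \sqrt 2 = \sqrt 2(|\alpha|-1)>0$ announced in the introduction; the subtrees are then genuine unconditioned branching Brownian motions of bounded length $\xi_\alpha = t - \tau$, which is exactly why the limiting decorations $\mathcal B_1, \mathcal B_2$ in \eqref{processcvg-} are copies of $\sum_{u\in N(\xi_\alpha)}\delta_{X_u(\xi_\alpha)}$ conditioned on $\{M_{\xi_\alpha} \le \chi_\alpha\}$ rather than the infinite-time object $\mathcal E$.

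The main obstacle will be the uniform integrability / domination needed to justify passing to the limit inside the $s$-integral: one needs upper and lower bounds on $u(\sqrt 2\alpha t - y, t-s)$ that are uniform in $s \in [0,t]$ and integrable against $e^{-s}p_s(0,y)$, so as to (i) show the integral concentrates on the region where $\sqrt 2\alpha t - y = O(1)$ and $t - s$ is either $O(1)$ (contributing the double integral via small subtrees) — and to rule out any contribution from $s$ close to $t$ with $t-s$ large, or $s$ small. Bramson-type tail bounds $u(x,r) \le C(1+x_+)e^{-\sqrt 2 x + \ldots}$ for the F-KPP solution, together with the known left-tail $w(z) \sim Ce^{\sqrt 2\gamma z}$ as $z\to-\infty$, should supply the needed envelope and simultaneously prove $\Phi(\alpha) < \infty$, as promised in the excerpt; controlling the crossover region $t - s \asymp 1$ versus $t-s \to \infty$ and checking that the two regimes glue continuously into the stated mixed law of $\xi_\alpha$ (with its atom at $0$ and absolutely continuous part) is where most of the technical work lies.
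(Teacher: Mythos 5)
Your proposal follows the paper's route exactly: decompose at the first branching time, the event $\{\tau>t\}$ supplies the atomic part of $\xi_\alpha$ and the $-1/\alpha$ term in $\Phi(\alpha)$ (Lemma~\ref{lemcvgI1}), while $\{t-\tau=O(1)\}$ supplies the absolutely continuous part and the double integral (Lemma~\ref{smalltime}), with Lemma~\ref{largetime} furnishing the domination that rules out $t-\tau\to\infty$ and which you correctly flag as the technical crux. One caution: the aside invoking Bramson's convergence~\eqref{eqn:bra} as the ``key input'' on $\{\tau\le t\}$ is a red herring — in this regime the subtrees have \emph{bounded} length $\xi_\alpha$, so no travelling-wave asymptotics enter and the exact finite-time law $u(\cdot,s)$ appears directly in the limit, as you do eventually observe (note also the median constant is $-\sqrt{2}\alpha$, without the extra $-\sqrt{2}$).
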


\begin{remark}
Recall that $\Phi(\alpha)$ is defined in \eqref{eqn:defPhi}. Observe that the law of $t - t \wedge \tau$ has a Dirac mass at $0$, corresponding to the probability that no branching occurs in the time interval $[0,t]$.
\end{remark}

Theorems~\ref{highmax} and~\ref{lowmax} verify the conjecture of Derrida and Shi, and as expected in the heuristics, the behaviour of the conditioned process is very different depending on the sign of $\alpha + \gamma$. We end up with a description in the boundary case $\alpha = -\gamma$, which except for the asymmetric fluctuations on the time $\tau$ is similar to the case $\alpha > -\gamma$.
\begin{theorem}\label{criticalmax}
If $\alpha=-\gamma$, as $t\to\infty$, we have
\begin{equation}
  \label{lowerprobc}
  \P(M_t\leq \sqrt{2}\alpha t)\sim C^{(2)} t^{3\gamma/4}e^{-(1+\gamma^2)t},
\end{equation}
where
\[
  C^{(2)}:=\frac{1}{\sqrt{2\pi}}\int_{\R_+}u^{3\gamma/2}e^{-2u^2} \dd u\int_\R e^{-\sqrt{2}\gamma z}w(z)^2\dd z
  =\frac{C^{(1)} \Gamma(\frac{3\sqrt{2}-1}{4})}{\sqrt{2\pi}2^{\frac{3\sqrt{2}-1}{4}}}.
\]
Moreover, conditioned on $\{M_t\leq\sqrt{2}\alpha t\}$,
\begin{equation}
  \label{jointcvgc}
  \left(\frac{t-\tau}{\sqrt{t}}, X_\emptyset(\tau)-(\sqrt{2}\alpha t-m_{t-\tau}), M_t-\sqrt{2}\alpha t\right)\Longrightarrow(\xi_{\alpha}, -\chi, -E),
\end{equation}
where $\xi_{\alpha}$ and $(\chi, E)$ are independent, $(\chi, E)$ have same law as in Theorem~\ref{highmax} and $\xi_{\alpha}$ is a positive random variable with density $2^{-3(\sqrt{2}+ 1)/4}\Gamma((3\sqrt{2}- 1)/4)u^{3\gamma/2}e^{-2u^2} \dd u$. Further, we have jointly
\begin{equation}
  \label{processcvgc}
  \mathcal{E}_t(\alpha)\Longrightarrow \mathcal{E}^-,
\end{equation}
where $\mathcal{E}^-$ is the same as in Theorem~\ref{highmax}.
\end{theorem}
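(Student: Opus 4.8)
The plan is to follow the route of the proof of Theorem~\ref{highmax}, the only genuinely new feature being that the optimal first branching time now sits at the boundary $\lambda=1$ of $[0,1]$, which turns the Gaussian fluctuations of $\tau$ into one-sided ones. Conditioning on the first branching time $\tau$ (an exponential variable of parameter $1$) and on the branching position $X_\emptyset(\tau)$, and using the branching property --- the two subtrees born at time $\tau$ being, given $(\tau,X_\emptyset(\tau))$, independent branching Brownian motions, while $X_\emptyset$ is a standard Brownian motion $B$ on $[0,\tau]$ --- one gets the exact identity
\begin{equation*}
  \P(M_t\le\sqrt2\alpha t)=e^{-t}\,\P(B_t\le\sqrt2\alpha t)+\int_0^t e^{-s}\,\E\!\left[u\!\left(\sqrt2\alpha t-B_s,\,t-s\right)^2\right]\diff s,\qquad u(y,r):=\P(M_r\le y).
\end{equation*}
The first term is $O(t^{-1/2}e^{-(1+\gamma^2)t})$ by the Gaussian tail estimate, hence negligible against the claimed equivalent since $\gamma>0$; it also shows that, conditionally on $\{M_t\le\sqrt2\alpha t\}$, one has $\tau<t$ with probability tending to $1$, so, unlike in Theorem~\ref{lowmax}, there is no atom at the boundary. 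Substituting $r=t-s$ and $\sqrt2\alpha t-B_s=m_r+z$ turns the remaining integral into
\begin{equation*}
  \int_0^t\diff r\int_\R\diff z\;\frac{e^{-(t-r)}}{\sqrt{2\pi(t-r)}}\,\exp\!\left(-\frac{(\sqrt2\alpha t-m_r-z)^2}{2(t-r)}\right)u(m_r+z,r)^2 .
\end{equation*}

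The heart of the matter is a Laplace-type analysis of this double integral. Setting $f(\rho):=(\alpha-\rho)^2/(1-\rho)$ and expanding for $r=o(t)$,
\begin{equation*}
  (t-r)+\frac{(\sqrt2\alpha t-m_r)^2}{2(t-r)}=(1+\alpha^2)t+(f'(0)-1)\,r+(1-\alpha)^2\frac{r^2}{t}-\frac{3\gamma}{2}\log r+\cdots,
\end{equation*}
while the cross term obeys $(\sqrt2\alpha t-m_r)\,z/(t-r)\to\sqrt2\alpha z=-\sqrt2\gamma z$. The defining feature of the critical value is that $f'(0)=\alpha(\alpha-2)$ equals $1$ precisely when $\alpha=1-\sqrt2=-\gamma$: the term linear in $r$ then cancels, and since $(1-\alpha)^2=2$ the surviving correction is $e^{-2r^2/t}r^{3\gamma/2}$, so the integrand behaves like $e^{-(1+\gamma^2)t}(2\pi t)^{-1/2}e^{-2r^2/t}r^{3\gamma/2}e^{-\sqrt2\gamma z}w(z)^2$. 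After checking that the ranges $r\le A$ and $r\ge\epsilon t$ contribute negligibly --- the former by a Gaussian bound on $\P(M_r\le x)$ for $x$ far below $0$, the latter by the exponential lower-deviation estimate of \cite{DS16} for $M_r$ --- Bramson's convergence $u(m_r+z,r)\to w(z)$, taken uniformly in $r$ on compact $z$-sets and supplemented by $u(m_r+z,r)\le Ce^{\sqrt2\gamma z}$ for $z\le0$ (and $u\le1$), lets one pass to the limit: the $z$-integral tends to $\int_\R e^{-\sqrt2\gamma z}w(z)^2\diff z=2C^{(1)}$, while $\int_0^t e^{-2r^2/t}r^{3\gamma/2}\diff r\sim t^{3\gamma/4+1/2}\int_0^\infty u^{3\gamma/2}e^{-2u^2}\diff u$ after the substitution $r=u\sqrt t$. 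Collecting powers of $t$ gives \eqref{lowerprobc} with $C^{(2)}$ as stated, the Gamma-function form being the substitution $v=2u^2$.

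For the joint convergence \eqref{jointcvgc}--\eqref{processcvgc} I would run the same decomposition against test functions. On $\{M_t\le\sqrt2\alpha t\}\cap\{\tau<t\}$ the pair $(\tau,X_\emptyset(\tau))$ has density proportional to the integrand above; the previous paragraph then shows directly that $(t-\tau)/\sqrt t$ converges to the positive variable $\xi_\alpha$ with density proportional to $u^{3\gamma/2}e^{-2u^2}$ on $\R_+$, and, independently (the integrand factorising into a function of $r$ times a function of $z$), that $X_\emptyset(\tau)-(\sqrt2\alpha t-m_{t-\tau})\to-\chi$ with $\chi$ carrying the weight $e^{-\sqrt2\gamma z}w(z)^2\diff z/(2C^{(1)})$. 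Given $(\tau,X_\emptyset(\tau))$, since $\sqrt2\alpha t-X_\emptyset(\tau)=m_{t-\tau}+\chi+o(1)$ with $t-\tau\to\infty$, the two subtrees are independent branching Brownian motions of length $t-\tau$ conditioned on $\{M_{t-\tau}\le m_{t-\tau}+\chi\}$, an event of asymptotic probability $w(\chi)>0$; applying the extremal-process convergence of the branching Brownian motion \cite{ABK13,ABBS13} to each subtree, together with the continuity of $w$, gives that $\mathcal{E}_t(\alpha)$ recentred converges to $\sum_{x\in\mathcal{E}_1\cup\mathcal{E}_2}\delta_{x-\chi}=\mathcal{E}^-$ and that $M_t-\sqrt2\alpha t\to-E$, with $E$ the gap between $\chi$ and the maximum of the two independent decorated processes. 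Since $w$ is the distribution function of $\max\mathcal{E}$, one has $\P(\max(\mathcal{E}_1\cup\mathcal{E}_2)\le\chi-y\mid\chi)=w(\chi-y)^2/w(\chi)^2$, and combining this with the $\chi$-weight a short computation recovers the joint law of $(\chi,E)$ from Theorem~\ref{highmax} and its independence from $\xi_\alpha$.

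The step I expect to be the main obstacle is the uniformity needed to interchange the limit $r\to\infty$ with the $r$- and $z$-integrations over the whole effective window $r\in[A,\epsilon t]$: one needs Bramson's convergence and the lower-tail bound $u(m_r+z,r)\le Ce^{\sqrt2\gamma z}$ ($z\le0$) to hold uniformly in $r$ in that range, and one must show that the truncations $r\le A$ and $r\ge\epsilon t$ are negligible, the latter relying on the exponential lower-deviation estimate for $M_r$. These are exactly the technical inputs assembled for Theorem~\ref{highmax}, so the critical case differs from it only in the final Laplace computation, where an interior optimum is replaced by the boundary optimum $\lambda=1$.
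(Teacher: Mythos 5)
Your proposal is correct and follows essentially the same route as the paper: decompose $u(\sqrt{2}\alpha t,t)=U_1+U_2$ at the first branching time, dismiss $U_1$ via the Gaussian tail, change variables to $(r,z)=(t-\tau,\,\sqrt{2}\alpha t-X_\emptyset(\tau)-m_r)$, localise the double integral to $r\in[\sqrt{t}/A,A\sqrt{t}]$, $z\in[-K,K]$ (the paper's Lemma~\ref{badtimecA}), and run Laplace's method in $r$, which at $\alpha=-\gamma$ has its optimum at the boundary $r=o(t)$ and produces the $t^{3\gamma/4}$ prefactor and the $r^{3\gamma/2}e^{-2r^2/t}$ weight (the paper's Lemma~\ref{goodtimec}), then replay the computation against test functions $\phi$ to get \eqref{jointcvgc}--\eqref{processcvgc}. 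The expansion you record, including the cancellation of the linear $r$-term precisely because $\gamma(2+\gamma)=1$, and the coefficients $(1-\alpha)^2=2$, $3\gamma/2$, are the same as those appearing in the paper's proof, as are the technical inputs you flag (uniformity of \eqref{uniformcvg} over compact $z$-sets, the a priori bound $u(m_r-z,r)\le c_\delta e^{-\sqrt{2}\gamma(1-\delta)z}$ from Lemma~\ref{CHbdu}, and negligibility of the truncated ranges).
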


We draw in Figure~\ref{fig:recapitule} scheme of the expected behaviour of the branching Brownian motion conditioned to stay below $\sqrt{2}\alpha t$ if $\alpha > -\gamma$ (Theorem~\ref{highmax}), $\alpha = -\gamma$ (Theorem~\ref{criticalmax}), or $\alpha < - \gamma$ (Theorem~\ref{lowmax}).

\begin{remark}
\label{phasetransition}
In fact, we could look closer around the phase transition $\alpha_c=-\gamma$ and obtain the following results by a straightforward adaptation of the reasoning used in Section~\ref{maxc}. We leave the proof to interested readers.
Let $a : \R_+ \to \R$ with $a_t = o(t)$.
\begin{enumerate}
\item If $a_t=o(\sqrt{t})$, then
\begin{equation}\label{lowerprobc+}
  \P(M_t\leq -\sqrt{2}\gamma t+ a_t)\sim C^{(2)} t^{3\gamma/4}e^{-2\sqrt{2}\gamma t+\sqrt{2}\gamma a_t}.
\end{equation}
\item If $a_t=a\sqrt{t}$ with $a\in\mathbb{R}$, there exists a positive function $a \mapsto C(a)$ such that
\begin{equation}\label{lowerprobcc}
  \P(M_t\leq -\sqrt{2}\gamma t+ a_t)\sim C(a) t^{3\gamma/4}e^{-2\sqrt{2}\gamma t+\sqrt{2}\gamma a_t}.
\end{equation}
\item If $\lim_{t\to\infty}\frac{a_t}{\sqrt{t}}=\infty$ and $a_t=o(t)$, then there exists $C^{(3)}, C^{(4)} > 0$ such that
\begin{align}
  \P(M_t\leq -\sqrt{2}\gamma t+ a_t)\sim& C^{(3)} a_t^{3\gamma/2}e^{-2\sqrt{2}\gamma t+\sqrt{2}\gamma a_t}, \label{lowerprobc++}\\
  \P(M_t\leq -\sqrt{2}\gamma t- a_t)\sim&  C^{(4)} (t/a_t)^{3\gamma/2+1}t^{-1/2}e^{-2\sqrt{2} t-\sqrt{2}\gamma a_t-\frac{a_t^2}{4t}}. \label{lowerprobc--}
\end{align}
\end{enumerate}
\end{remark}

\begin{figure}
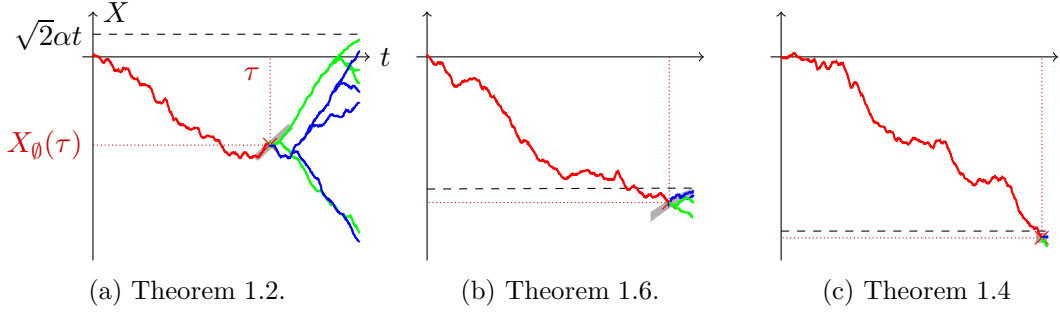

\centering
\begin{subfigure}[t]{0.34\textwidth}
\centering

\caption{Theorem~\ref{lowmax}}
\end{subfigure}
\caption{Scheme of the first branching time in different conditioning scenarios. The initial particle is figured in red, its two offspring giving birth to the green and blue subtrees respectively. The typical branching zone is figured as a grey area. Note its width is of order $t^{1/2}$ and its height of order $1$ in cases (a) and (b).}
\label{fig:recapitule}
\end{figure}

Finally, we consider the lower moderate deviations for the maximum, i.e.\@ the asymptotic behaviour of the probability of the event $\{M_t\leq m_t-a_t\}$ where $\lim_{t\to\infty}a_t=\infty$ and $a_t=o(t)$. As expected from the heuristic, in that case the first branching time happens at a time of order $a_t$, and the process after that first branching time is a branching Brownian motion conditioned on an event of positive probability. More precisely, the following result holds.
\begin{theorem}
\label{moderatemax}
If $a_t=o(t)$ and $\lim_{t\to\infty}a_t=\infty$, then as $t\to\infty$,
\begin{equation}
  \label{lowerprobm}
  \P(M_t\leq m_t-a_t)\sim C^{(1)} e^{-\sqrt{2}\gamma a_t}.
\end{equation}
Moreover, conditioned on $\{M_t\leq m_t-a_t\}$,
\begin{equation}
  \label{jointcvgm}
  \left(\frac{\tau-\frac{1}{2}a_t}{\sqrt{a_t/8}}, X_\emptyset(\tau)-(\sqrt{2}\tau-a_t), M_t-(m_t-a_t)\right)\Longrightarrow(\xi,-\chi,-E),
\end{equation}
and jointly,
\begin{equation}
  \label{processcvgm}
  \sum_{u\in N(t)}\delta_{X_u(t)-(m_t-a_t)}\Longrightarrow \mathcal{E}^-,
\end{equation}
where $(\xi,\chi,E,\mathcal{E}^-)$ is the same as in Theorem~\ref{highmax}.
\end{theorem}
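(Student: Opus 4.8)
The plan is to mimic the proof of Theorem~\ref{highmax} almost verbatim, the only genuinely new features being that the first branching time $\tau$ now concentrates around $\tfrac12 a_t=o(t)$ rather than around a time of order $t$, and that, precisely because $\tau=o(t)$, the logarithmic corrections cancel and $m_t-m_{t-\tau}=\sqrt2\,\tau+o(1)$. First I would decompose on the first branching event: with $u(y,s)=\P(M_s\le y)$ and $y_t:=m_t-a_t$, the branching property (conditionally on $\{\tau=r,X_\emptyset(\tau)=z\}$ the process after $\tau$ is a pair of independent branching Brownian motions of length $t-r$ issued from $z$) yields
\[
  \P(M_t\le y_t)=e^{-t}\P(B_t\le y_t)+\int_0^t e^{-r}\diff r\int_\R\frac{e^{-z^2/(2r)}}{\sqrt{2\pi r}}\,u(y_t-z,t-r)^2\diff z ,
\]
with $B$ a standard Brownian motion; since $a_t=o(t)$ the first term is $O(e^{-t})=o(e^{-\sqrt2\gamma a_t})$ and may be discarded.

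Next I would localize via the change of variables $r=\tfrac12 a_t+\sqrt{a_t/8}\,\xi$ and $z=\sqrt2\,r-a_t-\zeta$. A second-order Taylor expansion in $r$ gives, uniformly for $(\xi,\zeta)$ in a compact set,
\[
  -r-\frac{(\sqrt2 r-a_t-\zeta)^2}{2r}=-\sqrt2\gamma a_t-\frac{\xi^2}{2}-\sqrt2\gamma\,\zeta+o(1),
\]
while the Jacobian $\tfrac{\sqrt{a_t/8}}{\sqrt{2\pi r}}\to\tfrac1{2\sqrt{2\pi}}$; and since $r=o(t)$ one has $y_t-z-m_{t-r}=\zeta+o(1)$, so \eqref{eqn:bra} gives $u(y_t-z,t-r)\to w(\zeta)$. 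Granting that limit and integral can be exchanged, the integrand (after extracting $e^{-\sqrt2\gamma a_t}$) converges to $\tfrac1{2\sqrt{2\pi}}e^{-\xi^2/2}e^{-\sqrt2\gamma\zeta}w(\zeta)^2$, and integrating in $\xi$ and using $C^{(1)}=\tfrac12\int_\R e^{-\sqrt2\gamma z}w(z)^2\diff z$ proves \eqref{lowerprobm}. Justifying the interchange is where the real effort lies: one restricts to $\{|r-\tfrac12 a_t|\le\epsilon a_t,\ |\zeta|\le\epsilon a_t\}$ for the dominant contribution and bounds the complement by elementary Gaussian tails, by $u\le1$ when $r$ is close to $t$ or $\zeta$ is large positive, and by a lower-deviation bound on $M_{t-r}$ when $\zeta$ is large negative; inside the dominant region one needs the uniformity of \eqref{eqn:bra} together with a uniform-in-$s\ge1$ left-tail bound $\P(M_s\le m_s+\zeta)\le C(1+|\zeta|)e^{\sqrt2\gamma\zeta}$ for $\zeta\le0$ (consistent with $w(\zeta)\sim Ce^{\sqrt2\gamma\zeta}$, cf.\ \cite{ABK11}), so that $e^{-\sqrt2\gamma\zeta}u(y_t-z,t-r)^2$ is dominated. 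These are exactly the estimates already produced in the proof of Theorem~\ref{highmax}.

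For the joint convergence \eqref{jointcvgm}--\eqref{processcvgm} I would rerun the same computation with the integration restricted to $\{\xi\le x_1,\ \zeta\ge -x_2\}$ and, given $\{\tau=r,X_\emptyset(\tau)=z\}$, decompose $M_t$ and $\sum_{u\in N(t)}\delta_{X_u(t)-y_t}$ into the contributions of the two subtrees, each a branching Brownian motion of length $t-r\to\infty$ conditioned on $\{M_{t-r}\le y_t-z\}=\{M_{t-r}-m_{t-r}\le\zeta+o(1)\}$, an event of asymptotically positive probability $w(\zeta)$. Using $(\calE_s,M_s-m_s)\Rightarrow(\calE,\max\calE)$ \cite{ABK13,ABBS13} together with a boundary-continuity argument (licit since $\P(\max\calE=\zeta)=0$), the extremal process of such a conditioned process converges to $\calE$ conditioned on $\{\max\calE\le\zeta\}$. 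Dividing by \eqref{lowerprobm} then reads off the conditional law: the rescaled $\tau$ converges to an independent standard Gaussian $\xi$, $\zeta$ converges to $\chi$ with density $\tfrac1{2C^{(1)}}e^{-\sqrt2\gamma z}w(z)^2$, and conditionally on $\chi$ the two subtrees contribute i.i.d.\ copies of $\calE$ conditioned on $\{\max\calE\le\chi\}$ translated by $-\chi$, whence $M_t-y_t\to -E$ with $\P(E\ge y\mid\chi)=(w(\chi-y)/w(\chi))^2$ and $\sum_{u\in N(t)}\delta_{X_u(t)-y_t}\Rightarrow\calE^-$. This is precisely the quadruple $(\xi,\chi,E,\calE^-)$ of Theorem~\ref{highmax}.

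I expect the main obstacle to be the uniform control underlying the dominated-convergence step — in particular securing the uniform-in-$s$ left-tail bound on $M_s$ and handling the regime where $r$ and $\zeta$ are simultaneously of order $a_t$ — together with the transfer, jointly with the point-process convergence, of the conditioning $\{M_s\le m_s+\zeta\}$ to the limiting decorated Poisson process $\calE$ in the last step.
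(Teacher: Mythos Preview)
Your proposal is correct and follows essentially the same route as the paper, which explicitly says Theorem~\ref{moderatemax} ``is obtained following a similar line of proof as Theorem~\ref{highmax}'' with the straightforward substitution $1-\alpha\rightsquigarrow a_t/\sqrt{2}t$. Your choice to parametrize directly by $r=\tau$ and expand around $r=a_t/2$ (rather than by $u=(t-\tau)/t$ with minimum at $v_\alpha\to 1$) is a cosmetic difference that in fact sidesteps the ``maximum on the boundary'' technicality the paper alludes to; the uniform left-tail input you flag is exactly Lemma~\ref{CHbdu}.
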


Note that in this theorem, \eqref{lowerprobm} is already known in the literature (cf. \cite{ABK13}), our contribution consists in the joint convergence in distribution described in (\ref{jointcvgm}--\ref{processcvgm}).

\medskip

The main idea behind the proof of all these results is the decomposition of the branching Brownian motion at its first branching point. More precisely, the cumulative distribution function of the maximal displacement, defined as $u(z,s)=\P(M_s\leq z)$, for all $s \geq 0$ and $z \in \R$, satisfies
\begin{equation}
  \label{key}
  u(z,t)=e^{-t}\P(B_t\le z)+\int_0^t e^{-s}\dd s \int_{\R}\P(B_s\in \dd y) u(z-y,t-s)^2,
\end{equation}
where $(B_t)_{t\geq0}$ is a standard Brownian motion. This formula allows us to bootstrap close to optimal bounds on $u(\sqrt{2}t - a_t,t)$ from a priori bounds, using Laplace's method (cf.\@ e.g.\@~\cite[Chapter~4]{DemboZeitouni}). This allows us to obtain equivalents for different regimes as $t,a_t \to \infty$.

Observe that \eqref{key} is a simple consequence of the Markov property applied at the first branching time of the branching Brownian motion. Indeed, at time $t$, the original ancestor did not split with probability $e^{-t}$, in which case its position is distributed as a Gaussian random variable with variance $t$. Otherwise, the ancestor died at time $s$ with probability $e^{-s} \dd s$, in which case the maximum of the branching Brownian motion at time $t$ has the same law as the maximum of two independent branching Brownian motions at time $t-s$, shifted by the position of the ancestor at time $s$, which is distributed as $B_s$.

We use \eqref{key} to show that with high probability, conditioned on $\{M_t \leq \sqrt{2}t- a_t\}$, the first branching time has to happen at some specific time and position with high probability. Depending on the growth rate of $a_t$, this first branching time can be $o(t)$ (Theorem~\ref{moderatemax}), proportional to $\lambda t$ for some $\lambda \in [0,1]$ (Theorems~\ref{highmax} and~\ref{criticalmax}), or of order $t - o(t)$ (Theorem~\ref{lowmax}).

The rest of the paper is organized as follows. In Section~\ref{facts}, we state some well-known results on branching Brownian motion and show some rough bounds of $u(z,t)$. In Section~\ref{max+}, we treat the case where $\alpha\in(-\gamma,1)$ and prove Theorem~\ref{highmax} and Theorem~\ref{moderatemax} in that context. Section~\ref{max-} is devoted to proving Theorem~\ref{lowmax}. In Section~\ref{maxc}, we consider the critical case and prove Theorem~\ref{criticalmax}. The proofs of some technical lemmas are postponed to the Appendix~\ref{lems}.

In this paper, we write $f(t)\sim g(t)$ as $t\to\infty$ to denote $\lim_{t\to\infty}\frac{f(t)}{g(t)}=1$. As usual, $f(t)=o_t(g(t))$ means $\lim_{t\to\infty}\frac{f(t)}{g(t)}=0$. The quantities $(C_i)_{i\in\N}$ and $(c_i)_{i\in\N}$ represent positive constants, and $c,C$ are non-specified positive constants, that might change from line to line, taken respectively small enough and large enough.

\section{Preliminary results and well-known facts}
\label{facts}

In this section, we recall previously known results on the maximum and extremal process of branching Brownian motions. We source most of the results stated here from the book of Bovier \cite{Bo17} for convenience, and refer the reader to it for the origins of these results. Using these results, we obtain first order estimates on $u(z,t) = \P(M_t \leq z)$.

We begin by recalling that by \cite[Proposition 2.22]{Bo17}: for any family $(\mathcal{D}_t)_{t \in [0,\infty]}$ of point processes, the joint convergence in law $(\mathcal{D}_t,\max \mathcal{D}_t)$ to $(\mathcal{D}_\infty,\max \mathcal{D}_\infty)$ is equivalent to
\begin{equation}
  \label{eqn:Prop222Bov}
  \forall \phi \in \mathcal{C}_c^+(\R), \ \forall z \in \R, \  \lim_{t \to \infty} \E\left[e^{-\int \phi \dd \mathcal{D}_t}; \max \mathcal{D}_t\leq z\right]= \E\left[e^{-\int \phi \dd \mathcal{D}_\infty} ; \max \mathcal{D}_\infty \leq z\right],
\end{equation}
writing $\E\left[X;A\right]$ for $\E\left[X\mathbbm{1}_A\right]$, with $X$ a random variable and $A$ a measurable event.

We denote by $(X_u(t), u \in N(t))_{t \geq 0}$ a standard binary branching Brownian motion. For all $\phi \in \mathcal{C}_c^+(\R)$, we denote by
\begin{equation}
  \label{eqn:defUphi}
  u_\phi : (z,t) \in \R_+ \times \R \mapsto \E\left[e^{-\sum_{u\in N(t)}\phi(X_u(t)-z)}; M_t\leq z\right]=\E\left[\prod_{u\in N(t)}f_\phi(z-X_u(t))\right],
\end{equation}
where we have set $f_\phi:y\mapsto e^{-\phi(-y)}\ind{y\geq0}$. Recall that $u_\phi$ is the unique solution of the F-KPP partial differential equation \eqref{eqn:deffkpp} with initial condition $f_\phi$, i.e.
\begin{equation}
  \label{eqn:fkpp}
  \begin{cases}
  &\partial_t u = \frac{1}{2} \Delta u - u(1-u),\\
  &u_\phi(z,0) = f_\phi(z), \text{ for all } z \in \R.
  \end{cases}
\end{equation}
Remark that the cumulative distribution function of $M_t$ is given by $u(z,t) = u_0(z,t)$.

Then by \eqref{eqn:Prop222Bov}, the joint convergence in law of the centred extremal process and maximal displacement of the branching Brownian motion can be rewritten as the following pointwise convergence
\begin{equation}
  \label{eqn:cvPointwise}
  \forall \phi \in \mathcal{C}_c^+(\R), \forall z \in \R, \lim_{t \to \infty} u_\phi(m_t + z,t) = w_\phi(z),
\end{equation}
where $w_\phi(z):=\E\left[e^{-\int\phi(\cdot-z)\dd \mathcal{E}}; \max \mathcal{E} \leq z\right]$.

Moreover, convergence \eqref{eqn:cvPointwise} in fact holds uniformly on compact sets, by \cite[Lemma~5.5 and Theorem 5.9]{Bo17}. Let $K > 0$, using that $\min_{z\in[-K,K]}w_\phi(z)>0$, this uniform convergence result implies that
\begin{equation}
  \label{uniformcvg}
  \lim_{t \to \infty} \sup_{|z| \leq K} \frac{|u_\phi(m_t+z,t) - w_\phi(z)|}{w_\phi(z)} = 0.
\end{equation}
Applying the above result to the function $\phi \equiv 0$ gives that uniformly on $z \in [-K,K]$, $u(m_t+z,t) = w(z)(1+o(1))$ as $t \to \infty$.

Let $(B_t, t \geq 0)$ be a standard Brownian motion. We recall the following classical asymptotic on the tail of the standard Gaussian variable (cf. e.g. \cite[Lemma 1.1]{Bo17}). For any $z > 0$,
\begin{equation}\label{base}
\frac{1}{z\sqrt{2\pi}}e^{-z^2/2}(1-2z^{-2})\leq\P(B_1>z)\leq \frac{1}{z\sqrt{2\pi}}e^{-z^2/2}.
\end{equation}
It follows immediately that for any $z>0$ and $t>0$,
\begin{equation}\label{basic}
\int_z^\infty \frac{e^{-\frac{y^2}{2t}}}{\sqrt{2\pi t}}\dd z=\int_{-\infty}^{-z}\frac{e^{-\frac{y^2}{2t}}}{\sqrt{2\pi t}}\dd z=\P(B_t>z)\leq \frac{\sqrt{t}}{z\sqrt{2\pi}}e^{-\frac{z^2}{2t}}.
\end{equation}

Observe that \eqref{uniformcvg} gives tight bounds on $u(z,t)$ for $z$ in a neighbourhood of $m_t$. We use the above equation \eqref{basic} to give cruder bounds on $u(z,t)$ outside of this neighbourhood. At time $t$, the system contains $\#N(t)\geq 1$ individuals, the position of which is distributed with same law as $B_t$. Therefore, for any $t\geq 0$ and $z\in\R$,
\[
u(z,t)=\P(M_t\leq z)\leq \P(B_t\leq z),
\]
which using \eqref{basic}, yields for $z <0$:
\begin{equation}\label{upbdB}
  u(z,t)\le \frac{\sqrt{t}}{-z\sqrt{2\pi}}e^{-\frac{z^2}{2t}}.
\end{equation}
This straightforward upper bound, combined with the lower deviation results of Derrida and Shi \cite{DS16} gives us the following lemma.

\begin{lemma}\label{DSbdu}
For any $\beta \geq 1$ and $\epsilon > 0$, there exists $t_{\epsilon,\beta} > 1$ such that for any $t\geq t_{\epsilon,\beta}$,
\begin{equation}\label{DSupbdu}
  u(\sqrt{2}at,t)\leq
  \begin{cases}
    1, &\textrm{ if } a\geq 1;\\
    e^{-2\gamma(1-a)t+\epsilon t}, &\textrm{ if } -\gamma \leq a<1;\\
    e^{-(1+a^2)t+\epsilon t}, &\textrm{ if } -\beta\leq a< - \gamma ;\\
    e^{-a^2t}, &\textrm{ if } a<-\beta.
  \end{cases}
\end{equation}
\end{lemma}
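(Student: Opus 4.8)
The four cases split naturally according to the size of $a$, and in each case we combine the crude Gaussian bound \eqref{upbdB} with the exponential-decay estimates of Derrida and Shi. The cases $a \geq 1$ and $a < -\beta$ are immediate and require no input from \cite{DS16}: for $a \geq 1$ we simply bound $u \leq 1$, and for $a < -\beta$ we apply \eqref{upbdB} with $z = \sqrt{2}at$, giving $u(\sqrt{2}at,t) \leq \frac{\sqrt t}{-\sqrt{2}at\sqrt{2\pi}} e^{-a^2 t}$; since $a < -\beta \leq -1$, the prefactor $\frac{1}{2\sqrt{\pi}}\sqrt{t}/(-at) = o(1)$ as $t \to \infty$ (uniformly, as $|a|$ only helps), so it is $\leq 1$ for $t$ large enough, and $u(\sqrt{2}at,t) \leq e^{-a^2 t}$. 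Note here one must be a little careful: the threshold $t_{\epsilon,\beta}$ should be chosen uniformly over $a < -\beta$, which works because $\sqrt{t}/(-at\sqrt{4\pi}) \leq \sqrt{t}/(\beta t \sqrt{4\pi}) \to 0$.

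For the two middle ranges $-\gamma \leq a < 1$ and $-\beta \leq a < -\gamma$, the content is exactly the upper bound half of the Derrida--Shi exponential asymptotics quoted in the introduction, namely $\P(M_t \leq \sqrt{2}a t) = e^{-2\gamma(1-a)t + o(t)}$ for $a \in (1-\sqrt2, 1)$ and $e^{-(1+a^2)t + o(t)}$ for $a \leq 1-\sqrt2$. The point of the present lemma is to make the $o(t)$ error uniform over the relevant compact range of $a$, and to include the boundary value $a = -\gamma$ in both expressions (consistent since $2\gamma(1-(-\gamma)) = 2\gamma + 2\gamma^2 = 2\gamma + (\sqrt2-1)^2\cdot 2$... in fact $2\gamma(1+\gamma) = 1 + \gamma^2$ because $\gamma^2 + 2\gamma - 1 = (\sqrt2-1)^2 + 2(\sqrt2-1) - 1 = 0$, so the two bounds agree at $a=-\gamma$). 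I would obtain the uniformity by a standard compactness/monotonicity argument: $a \mapsto u(\sqrt{2}at,t) = \P(M_t \leq \sqrt2 at)$ is non-decreasing in $a$, so for a fixed grid $a_0 = -\gamma < a_1 < \dots < a_k = 1$ of mesh $\delta$, on each subinterval $[a_{i-1}, a_i]$ we bound $u(\sqrt2 at, t) \leq u(\sqrt2 a_i t, t)$ and apply the pointwise Derrida--Shi estimate at $a_i$; since the rate function $a \mapsto 2\gamma(1-a)$ is Lipschitz, choosing $\delta$ small (depending on $\epsilon$) and then $t$ large absorbs the discretization error $2\gamma\delta t$ into the $\epsilon t$ slack. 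The same argument handles $[-\beta, -\gamma]$ using the Lipschitz (on a bounded set) rate function $a \mapsto 1 + a^2$.

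The main obstacle — really the only subtle point — is ensuring the threshold $t_{\epsilon,\beta}$ can be chosen uniformly in the stated ranges of $a$ simultaneously: over the compact sets $[-\gamma,1)$ and $[-\beta,-\gamma]$ this is the monotonicity-plus-finite-grid argument above, and over the unbounded range $a < -\beta$ it is the decay of the Gaussian prefactor, which is what forces $\beta \geq 1$ (so that $a^2 t$ genuinely dominates the $\sqrt t / |a| t$ correction). I would also remark that one does not even need the full strength of \cite{DS16}: the upper bounds alone suffice, and these can alternatively be read off directly from \eqref{key} by a one-step first-moment / subadditivity computation, but invoking \cite{DS16} is cleaner. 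No further ideas are needed; the lemma is a bookkeeping consolidation of known facts into a form convenient for the bootstrap via \eqref{key}.
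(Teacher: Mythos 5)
Your proposal is correct and, despite the different presentation, is essentially the same argument the paper gives. The four-way split is identical: $u\le 1$ for $a\ge 1$; the Gaussian bound \eqref{upbdB} gives $u(\sqrt{2}at,t)\le \frac{1}{-2a\sqrt{\pi t}}e^{-a^2t}\le e^{-a^2t}$ for $a<-\beta\le -1$ (the paper notes this already holds for all $t\ge 1$, with no need to enlarge the threshold); and for the compact range $a\in[-\beta,1]$ the paper invokes Derrida--Shi's exponential rate $\psi(a)$ and upgrades the pointwise convergence $\tfrac{1}{t}\log u(\sqrt2 at,t)\to\psi(a)$ to uniform convergence on compacts. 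Where the paper cites ``Dini's theorem'' — meaning the second (monotone) form, sometimes attributed to P\'olya, applicable because $a\mapsto \tfrac{1}{t}\log u(\sqrt2 at,t)$ and $a\mapsto\psi(a)$ are non-decreasing and $\psi$ is continuous — you instead spell out the finite-grid-plus-monotonicity argument, which is precisely the standard proof of that theorem. So the content is the same; you have simply inlined the proof of the quoted tool. One minor stylistic point: the paper treats $[-\beta,1]$ as a single compact interval, whereas you split at $a=-\gamma$; this is harmless (and your verification that $2\gamma(1+\gamma)=1+\gamma^2$ is correct), but unnecessary once the uniform-convergence step is in hand.
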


\begin{proof}
Let $\beta \geq 1$, we begin by noting that $u(z,t) \leq 1$ for any $z \in \R$ and $t \geq 0$. Additionally, by \eqref{upbdB}, for any $a < -1$, we have
\begin{equation}
  \label{roughupbdu}
  u(\sqrt{2}at,t) \leq \frac{1}{-2a \sqrt{\pi t}} e^{- a^2 t} \leq e^{-a^2 t},
\end{equation}
for all $t \geq 1$. To complete the proof, it is therefore enough to bound $u(\sqrt{2} a t, t)$ for $a \in [-\beta,1)$.

We first reformulate Derrida ans Shi's result \cite[Theorem 1]{DS16} as follows:
\begin{equation}
  \label{DS17M}
  \lim_{t\to\infty}\frac{1}{t} \log u(\sqrt{2} a t, t)=\psi(a):=
  \begin{cases}
    0, &\textrm{ if } a\geq 1;\\
    -2\gamma(1-a), &\textrm{ if }-\gamma \leq a<1;\\
    -(1+a^2), &\textrm{ if } a<-\gamma.
  \end{cases}
\end{equation}
Note that being a cumulative distribution function for any $t \geq 0$, the function $z\mapsto u(z,t)$ is non-decreasing. Thus, both $\frac{\log u(\sqrt{2} a t, t)}{t}$ and $\psi(a)$ are non-decreasing in $a\in\R$, and moreover $\psi$ is continuous. By Dini's theorem, the convergence in \eqref{DS17M} holds uniformly on any compact sets in $\mathbb{R}$, hence in particular on $[-\beta,1]$. As a result, for all $\epsilon>0$ ,there exists $t_{\epsilon,\beta} > 1$ such that for all $t \geq t_{\epsilon,\beta}$, we have
\begin{equation}\label{uniformbd}
\sup_{a\in[-\beta,1]}\Big\vert \frac{1}{t}\log u(\sqrt{2} a t, t) -\psi(a)\Big\vert \leq \epsilon.
\end{equation}
We then deduce \eqref{DSupbdu} from \eqref{uniformbd} and \eqref{roughupbdu}.
\end{proof}

Next, we recall \cite[Theorem 1.7]{CH20}, that gives a tight estimate on the moderate lower deviations of the maximal displacement: for any sequence $(a_t)$ such that $\lim_{t\to\infty}a_t =\infty$ and $a_t = o(t)$,
\[
  \P(M_t\leq m_t-a_t)=e^{-\sqrt{2}(\gamma+o_t(1))a_t}.
\]
To complete this section, we strengthen the above estimate into the following non-asymptotic upper bound for $u(m_t-z,t)$.
\begin{lemma}\label{CHbdu}
For any $\delta\in(0,1)$, there exist $K_\delta\geq 1$ and $T_\delta\geq1$, such that for any $t\geq T_\delta$ and any $z\geq K_\delta$,
\begin{equation}\label{CHupbdu}
u(m_t-z,t)=\P(M_t\leq m_t-z)\leq c_\delta e^{-\sqrt{2}\gamma(1-\delta)z},
\end{equation}
with $c_\delta>1$ a constant depending on $\delta$.
\end{lemma}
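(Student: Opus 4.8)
The plan is to upgrade the asymptotic estimate of \cite[Theorem 1.7]{CH20} to the claimed non-asymptotic uniform bound by a compactness argument, using Lemma~\ref{DSbdu} to handle the regime where $z$ is of the same order as $t$. Fix $\delta\in(0,1)$ and suppose, for contradiction, that the statement fails, i.e.\ no such $K_\delta,T_\delta,c_\delta$ exist. Then for each $n\ge1$ one can select $t_n\ge n$ and $z_n\ge n$ with
\[
  u(m_{t_n}-z_n,t_n) > n\,e^{-\sqrt2\gamma(1-\delta)z_n},
\]
so that $t_n\to\infty$ and $z_n\to\infty$. Set $a_n:=(m_{t_n}-z_n)/(\sqrt2 t_n)$, so that $\sqrt2(1-a_n)t_n = z_n+\tfrac{3}{2\sqrt2}\log t_n$ and hence $2\gamma(1-a_n)t_n = \sqrt2\gamma z_n + \tfrac{3\gamma}{2}\log t_n$. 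I will reach a contradiction in each of two regimes, after passing to a subsequence.

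Suppose first that $\liminf_n z_n/t_n = 0$. Passing to a subsequence along which $z_n=o(t_n)$, and then to a further subsequence with $t_{n+1}\ge2t_n$, the prescription $a_t:=z_n$ for $t\in[t_n,t_{n+1})$ defines a sequence with $a_t\to\infty$ and $a_t=o(t)$. Then \cite[Theorem 1.7]{CH20} yields $u(m_{t_n}-z_n,t_n)=e^{-\sqrt2(\gamma+o(1))z_n}$ along the subsequence, which is at most $e^{-\sqrt2\gamma(1-\delta)z_n}$ once $n$ is large; this contradicts the choice of $(t_n,z_n)$.

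Suppose instead that $\liminf_n z_n/t_n>0$; passing to a subsequence, $z_n\ge\rho t_n$ for some fixed $\rho>0$ and all $n$, whence $a_n\le 1-\rho/(2\sqrt2)<1$ for $n$ large, and $u(m_{t_n}-z_n,t_n)=u(\sqrt2 a_n t_n,t_n)$ is bounded via Lemma~\ref{DSbdu} with $\beta=2$ and its free parameter $\epsilon$ chosen small, depending only on $\rho$ and $\delta$. If $a_n\ge-\gamma$, the lemma gives $u\le e^{-2\gamma(1-a_n)t_n+\epsilon t_n}=t_n^{-3\gamma/2}e^{-\sqrt2\gamma z_n+\epsilon t_n}$, and since $\epsilon t_n\le(\epsilon/\rho)z_n$ this is $\le e^{-\sqrt2\gamma(1-\delta)z_n}$ for $\epsilon$ small and $n$ large. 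If $-2\le a_n<-\gamma$, one uses $u\le e^{-(1+a_n^2)t_n+\epsilon t_n}$ together with the identity $1+a^2=2\gamma(1-a)+(a+\gamma)^2$ (valid because $1-2\gamma=\gamma^2$), which gives $1+a_n^2-2\gamma(1-\delta)(1-a_n)\ge2\gamma\delta(1-a_n)>2\sqrt2\gamma\delta$; combined with $2\gamma(1-a_n)t_n=\sqrt2\gamma z_n+\tfrac{3\gamma}{2}\log t_n$ this again beats $e^{-\sqrt2\gamma(1-\delta)z_n}$ once $\epsilon<\sqrt2\gamma\delta$. If $a_n<-2$ then $a_n<-\gamma-1$, so $a_n^2\ge2\gamma(1-a_n)$ (the roots of $a^2-2\gamma(1-a)$ being $-\gamma\pm1$), and the bound $u\le e^{-a_n^2 t_n}$ is already smaller than $e^{-\sqrt2\gamma(1-\delta)z_n}$. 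Each case contradicts the choice of $(t_n,z_n)$, so the statement holds; any $c_\delta\ge1$ works, in particular one with $c_\delta>1$.

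The delicate point is the second regime: one must check that the polynomial factor $t_n^{-3\gamma/2}$, the error $e^{\epsilon t_n}$ coming from Lemma~\ref{DSbdu}, and the logarithmic corrections buried in $a_n$ combine to leave in the exponent a surplus proportional to $\delta z_n$, uniformly in how large $z_n/t_n$ is. The mechanism is that the $(1-\delta)$ slack in the target exponent, through the identity $1+a^2=2\gamma(1-a)+(a+\gamma)^2$, produces a gap bounded below independently of $a_n$ on the whole relevant range, so that a fixed (though small) choice of $\epsilon$ suffices. One could alternatively avoid \cite{CH20} altogether and prove the bound directly by inserting the a priori estimates of Lemma~\ref{DSbdu} into the renewal identity \eqref{key} and running the Laplace-method bootstrap sketched in the introduction; but the compactness argument above is shorter and self-contained given what precedes.
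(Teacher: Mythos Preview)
Your proof is correct but takes a genuinely different route from the paper's. The paper gives a direct, constructive argument: for $1\le z\le t$ it applies the Markov property at a grid of intermediate times $kz\eta$ (an idea borrowed from \cite{GH18}), splitting according to whether the Yule process $\#N(\cdot)$ has produced many or few particles by those times, and combining a Gaussian tail bound with the convergence \eqref{eqn:bra}; for $z\ge t$ it appeals directly to Lemma~\ref{DSbdu}. You instead argue by contradiction and compactness, invoking the moderate-deviation asymptotic of \cite[Theorem~1.7]{CH20} as a black box in the regime $z_n=o(t_n)$, and Lemma~\ref{DSbdu} together with the identity $1+a^2=2\gamma(1-a)+(a+\gamma)^2$ when $z_n\gtrsim t_n$. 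Your argument is noticeably shorter and the case analysis via that identity is clean; the trade-off is that it leans on \cite{CH20}, whereas the paper's proof is self-contained, in keeping with its presentation of Lemma~\ref{CHbdu} as a \emph{strengthening} of the \cite{CH20} estimate rather than a corollary of it. Your closing remark about bootstrapping through \eqref{key} describes yet a third route, but it is not what the paper actually does.
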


The idea of the proof of this result is mainly borrowed from the proof of Theorem~3.2 (Case 2) in \cite{GH18}. We apply the Markov property at some intermediate time, and observe that either there is an anomalously small number of particles alive at that time, or all of the particles alive at that time must satisfy Lemma~\ref{DSbdu}. The detailed proof is postponed to Appendix~\ref{lems1}.

\section{The case \texorpdfstring{$-\gamma<\alpha<1$}{alpha larger than gamma}: proof of Theorems~\ref{highmax} and~\ref{moderatemax}}\label{max+}

In this section, we treat the case when $1-\sqrt{2}<\alpha<1$ and  prove Theorem~\ref{highmax}. The proof of Theorem~\ref{moderatemax}, which can be though of as $\alpha = 1 - o_t(1)$ can be obtained in a very similar fashion. We thus feel free to omit it.

Recall that if $-\gamma <\alpha < 1$, we expect the existence of $\lambda_\alpha \in (0,1)$ such that with high probability the first branching time of the branching Brownian motion conditioned on the event $\{M_t \leq \sqrt{2}\alpha t\}$ is close to $\lambda_\alpha t$. In this situation, we have $\lambda_\alpha = \frac{1-\alpha}{\sqrt{2}}$.

Let $\phi \in \mathcal{C}_c^+(\R)$, applying the Markov property at the first branching time $\tau$, we obtain that $u_\phi$ satisfies \eqref{key} as well:
\begin{align}
  \label{eqn:keyPhi}
  u_\phi(z,t) &= e^{-t} \E\left( e^{-\phi(B_t-z)}; B_t \leq z\right) + \int_0^t e^{-s} \dd s \int_\R \P(B_s \in \dd y) u_\phi(z-y,t-s)^2\\
  &=: U_1^\phi (z,t)+U_2^\phi(z,t).\label{key+}
\end{align}
Note that $U_1^\phi(z,t)$ is the contribution to $u_\phi(z,t) = \E\left(e^{-\sum_{u \in N(t)}\phi(X_u(t)-z)}; M_t \leq z\right)$, which comes from the event $\{\tau > t\}$ on which no branching occurred. As we write $u$ for $u_0$, we denote by $U_1$ and $U_2$ the quantities defined above with $\phi \equiv 0$.

Using standard computations for the Brownian motion, we first give an uniform estimate of $U^\phi_1(\sqrt{2}\alpha t, t)$ for $\alpha \in [0,1)$, as well as an exact asymptotic for $\alpha<0$.
\begin{lemma}\label{lemcvgI1}
Let $\phi \in \mathcal{C}_c^+(\R)$, for $\alpha \geq 0$, we have
\begin{equation}\label{bdI1}
\frac{e^{-\Vert \phi \Vert_\infty}}{2}e^{-t} \leq U^\phi_1(\sqrt{2}\alpha t, t)\leq e^{-t}.
\end{equation}
Moreover, for $\alpha<0$, we have
\begin{equation}\label{cvgI1}
\lim_{t\to\infty}\sqrt{t}e^{(1+\alpha^2)t}U^\phi_1(\sqrt{2}\alpha t, t)=\frac{1}{\sqrt{2\pi}} \int_{-\infty}^0 e^{- \phi(y) - \sqrt{2}\alpha y} \dd y.
\end{equation}
In particular, $U_1(\sqrt{2}\alpha t, t) \sim \frac{e^{-(1+\alpha^2)t}}{\sqrt{4\pi t}|\alpha|}$ as $t \to \infty$.
\end{lemma}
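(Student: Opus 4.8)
The plan is to evaluate $U_1^\phi(\sqrt2\alpha t,t)$ almost explicitly, since it only involves the Gaussian density of $B_t$. Recall
\[
  U_1^\phi(\sqrt2\alpha t,t)=e^{-t}\,\E\!\left[e^{-\phi(B_t-\sqrt2\alpha t)};\,B_t\le\sqrt2\alpha t\right].
\]
For $\alpha\ge 0$ I would argue by crude bracketing: since $\phi\ge 0$ and $\phi$ vanishes off a compact set, $e^{-\Vert\phi\Vert_\infty}\le e^{-\phi(\cdot)}\le 1$; combined with $\P(B_t\le\sqrt2\alpha t)\le 1$ this gives the upper bound $U_1^\phi(\sqrt2\alpha t,t)\le e^{-t}$, and combined with $\P(B_t\le\sqrt2\alpha t)\ge\P(B_t\le 0)=\tfrac12$ (here we use $\alpha\ge 0$) it gives the lower bound $\tfrac12 e^{-\Vert\phi\Vert_\infty}e^{-t}$, which is exactly \eqref{bdI1}.

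For $\alpha<0$ I would write the expectation as an integral against the density of $B_t$ and translate by $\sqrt2\alpha t$: with the substitution $y=x-\sqrt2\alpha t$,
\[
  \E\!\left[e^{-\phi(B_t-\sqrt2\alpha t)};\,B_t\le\sqrt2\alpha t\right]
  =\int_{-\infty}^{0}e^{-\phi(y)}\,\frac{e^{-(y+\sqrt2\alpha t)^2/(2t)}}{\sqrt{2\pi t}}\,\dd y.
\]
Expanding $(y+\sqrt2\alpha t)^2=y^2+2\sqrt2\alpha t\,y+2\alpha^2 t^2$ factorises the Gaussian weight as $\tfrac{1}{\sqrt{2\pi t}}e^{-\alpha^2 t}\,e^{-\sqrt2\alpha y}\,e^{-y^2/(2t)}$, and multiplying by $e^{-t}$ yields
\[
  \sqrt t\,e^{(1+\alpha^2)t}\,U_1^\phi(\sqrt2\alpha t,t)
  =\frac{1}{\sqrt{2\pi}}\int_{-\infty}^{0}e^{-\phi(y)-\sqrt2\alpha y}\,e^{-y^2/(2t)}\,\dd y .
\]
The final step is to pass $t\to\infty$ inside the integral. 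Since $\alpha<0$, the function $y\mapsto e^{-\sqrt2\alpha y}$ decays exponentially as $y\to-\infty$, so $e^{-\phi(y)-\sqrt2\alpha y}\le e^{-\sqrt2\alpha y}$ is integrable on $(-\infty,0]$, while $e^{-y^2/(2t)}\uparrow 1$ pointwise; dominated convergence then gives \eqref{cvgI1}. Taking $\phi\equiv 0$, one has $\int_{-\infty}^{0}e^{-\sqrt2\alpha y}\,\dd y=\tfrac{1}{-\sqrt2\alpha}=\tfrac{1}{\sqrt2|\alpha|}$, hence $\sqrt t\,e^{(1+\alpha^2)t}U_1(\sqrt2\alpha t,t)\to\tfrac{1}{2\sqrt\pi|\alpha|}$, i.e.\ $U_1(\sqrt2\alpha t,t)\sim\frac{e^{-(1+\alpha^2)t}}{\sqrt{4\pi t}\,|\alpha|}$.

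There is no real obstacle here: the statement is an exact Gaussian computation followed by a routine application of dominated convergence. The only point requiring a little care is the bookkeeping of signs — that for $\alpha<0$ the constraint $B_t\le\sqrt2\alpha t$ is a large-deviation event responsible for the $e^{-\alpha^2 t}$ factor, and that the tilting weight $e^{-\sqrt2\alpha y}$ is \emph{decreasing} on $(-\infty,0]$, which is what makes the limiting integral finite and supplies an integrable dominating function; everything else is elementary.
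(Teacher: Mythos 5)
Your proof is correct and follows essentially the same route as the paper: the bracketing argument for $\alpha\ge 0$ is identical, and for $\alpha<0$ your direct completion of the square in the Gaussian density is just the unwound version of the paper's Girsanov transform, arriving at the same integral and the same dominated-convergence step. No gaps.
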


\begin{proof}
We have $U_1^\phi(\sqrt{2}\alpha t, t) = e^{-t} \E(e^{-\phi(B_t - \sqrt{2}\alpha t)}; B_t \leq \sqrt{2}\alpha t)$. For all $\alpha \geq 0$, as $\phi$ is non-negative, we have
$
  1 \geq \E(e^{-\phi(B_t - \sqrt{2}\alpha t)}; B_t \leq \sqrt{2}\alpha t) \geq e^{-\Vert \phi \Vert_\infty} \P(B_t \leq 0),
$
which is enough to prove \eqref{bdI1}.

Additionally, for $\alpha < 0$, by Girsanov transform, we then have
\begin{align*}
  U_1^\phi(\sqrt{2}\alpha t, t)
  &= e^{-(1+\alpha^2)t} \E\left( e^{-\phi(B_t) - \sqrt{2}\alpha B_t} ; B_t \leq 0 \right)\\
  &= \frac{e^{-(1+\alpha^2)t}}{\sqrt{2 \pi t}} \int_{-\infty}^0 e^{-y^2/2t - \phi(y) - \sqrt{2}\alpha y} \dd y.
\end{align*}
By the dominated convergence theorem, it yields
\[\lim_{t \to \infty} \int_{-\infty}^0 e^{-y^2/2t - \phi(y) - \sqrt{2}\alpha y} \dd y = \int_{-\infty}^0 e^{- \phi(y) - \sqrt{2}\alpha y} \dd y,\]
which completes the proof.
\end{proof}

\begin{remark}
For all $\alpha\in(-\gamma,1)$, we have $u(\sqrt{2}\alpha t, t)=e^{-2\gamma(1-\alpha)t(1+o_t(1))}$ by~\cite{DS16}. Moreover, observe that
\[
  2\gamma(1-\alpha)<
  \begin{cases}
    1, &\textrm{ if } 0\leq\alpha<1;\\
    1+\alpha^2 , &\textrm{ if }-\gamma<\alpha<0.
  \end{cases}
\]
As a result, Lemma~\ref{lemcvgI1} shows that $U_1(\sqrt{2}\alpha t, t)=o_t(1)u(\sqrt{2}\alpha t, t)$ for all $\alpha\in(-\gamma,1)$, which by \eqref{key+} implies that $u(\sqrt{2}\alpha t,t) \sim U_2(\sqrt{2}\alpha t,t)$ as $t \to \infty$.
\end{remark}

We thus turn to study $U^\phi_2(\sqrt{2}\alpha t, t)$, which is defined, by a es, as
\[
  U^\phi_2(\sqrt{2}\alpha t, t) = \int_0^t \dd s \int_\R \dd z \frac{e^{-(t-s)-\frac{(m_s+z-\sqrt{2}\alpha t)^2}{2(t-s)}}}{\sqrt{2\pi (t-s)}}u_\phi(m_s+z,s)^2.
\]
Recalling that $v_\alpha = \frac{\gamma+\alpha}{\sqrt{2}} = 1-\frac{1-\alpha}{\sqrt{2}}\in (0,1)$, we begin by observing that most of the mass on this double integral is carried by $\{(s,y): |s-v_\alpha t| \leq A \sqrt{t}, |y-m_s| \leq K\}$, with $A,K$ large enough constants. This is consistent with Theorem~\ref{highmax}, and can be thought of as a proof of the tightness of the family of variables
\[\left\{(t^{-1/2}(\tau - (1 - v_\alpha) t), X_\emptyset(\tau) - (\sqrt{2}\alpha t- m_{t-\tau}),M_t-\sqrt{2}\alpha t, \mathcal{E}_t), t \geq 0\right\}.\]
For any Borel sets $I \subset [0,t]$ and $B \subset \R$, let
\[
  U^\phi_2(\sqrt{2}\alpha t, t,I,B):=\int_I\dd s\int_B  \dd z \frac{e^{-(t-s)-\frac{(m_s+z-\sqrt{2}\alpha t)^2}{2(t-s)}}}{\sqrt{2\pi (t-s)}}u_\phi(m_s+z,s)^2.
\]

\begin{lemma}\label{badtime+badposition}
Let $\alpha \in (-\gamma,1)$, we set
$
  I_{t,A} = \left[ v_\alpha t - A \sqrt{t} , v_\alpha t + A \sqrt{t} \right] \cap [0,t]
$ for all $A, t > 0$. 
For all $\phi \in \mathcal{C}_c^+(\R)$, we have
\begin{equation}\label{badtimecvg}
  \limsup_{K \to \infty} \limsup_{t \to \infty} \frac{e^{2\gamma(1-\alpha)t}}{t^{3\gamma/2}} \left[U^\phi_2(\sqrt{2}\alpha t, t)-U^\phi_2(\sqrt{2}\alpha t, t,I_{t,A},[-K,K])\right] =o_{A}(1).
\end{equation}
\end{lemma}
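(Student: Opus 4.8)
The plan is to split the double integral defining $U_2^\phi(\sqrt2\alpha t,t)$ according to whether the branching time $s$ lies in $I_{t,A}$ or not, and whether the displacement $z$ of the ancestor (relative to $m_s$) lies in $[-K,K]$ or not, and to show each ``bad'' region contributes negligibly compared to $t^{3\gamma/2}e^{-2\gamma(1-\alpha)t}$. The main input is the a priori bound of Lemma~\ref{DSbdu} (and its refinement Lemma~\ref{CHbdu}) on $u(\sqrt2 a s,s)$, together with the Gaussian tail bound \eqref{basic}, which will let us estimate, for each pair $(s,z)$, the exponential rate
\[
  (t-s) + \frac{(m_s+z-\sqrt2\alpha t)^2}{2(t-s)} + 2\cdot(\text{rate of }u(m_s+z,s)^2),
\]
and check that this rate strictly exceeds $2\gamma(1-\alpha)$ away from the optimal window, with enough room to absorb polynomial corrections. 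Concretely: writing $s = \sigma t$ and $m_s + z \approx \sqrt2 a s$ for the relevant range, one minimises $(1-\sigma) + \frac{(\alpha-(1-\sigma))^2}{\sigma}$ over $\sigma\in[0,1]$ (exactly the variational problem from the heuristics), whose unique minimiser is $\sigma = \lambda_\alpha = \frac{1-\alpha}{\sqrt2}$, i.e.\ $s\approx v_\alpha t$; strict convexity near the minimum gives a quadratic penalty of order $(s-v_\alpha t)^2/t$, which exceeds any fixed multiple of $\log t$ once $|s-v_\alpha t|\geq A\sqrt t$ with $A$ large. This handles the ``$s\notin I_{t,A}$'' part.

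For the ``$s\in I_{t,A}$ but $|z|> K$'' part I would argue as follows. On $I_{t,A}$ we have $s\to\infty$ and $t-s\to\infty$, and $m_s + z - \sqrt2\alpha t$ is, up to $O(\sqrt t)$ fluctuations in $s$, equal to $-\sqrt2\gamma(t-s) + z + O(\log t)$ — more precisely $m_s - \sqrt2\alpha t = -\sqrt2(t-s) + (\sqrt2 - \sqrt2\alpha)(t-s) - \tfrac{3}{2\sqrt2}\log s + \ldots$, and the point is that the Gaussian density in $z$ factorises the relevant Radon--Nikodym term $e^{\sqrt2\gamma z}$, against which $u_\phi(m_s+z,s)^2 \leq u(m_s+z,s)^2$ is controlled. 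For $z\geq K$ one uses $u(m_s+z,s)\leq 1$ but gains from the Gaussian factor $e^{-\frac{(m_s+z-\sqrt2\alpha t)^2}{2(t-s)}}$ an extra factor $e^{-c z}$ relative to the $z=0$ value (since the quadratic is centred near $-\sqrt2\gamma(t-s)+O(\sqrt t)$ and $t-s\asymp t$, so shifting $z$ up by $K$ multiplies the density by roughly $e^{\sqrt2\gamma z - \frac{z^2}{2(t-s)}}$... wait, that is increasing) — so here one must instead use Lemma~\ref{CHbdu}: $u(m_s+z,s)\leq c_\delta e^{-\sqrt2\gamma(1-\delta)z}$, so $u(m_s+z,s)^2 e^{\sqrt2\gamma z}\leq c_\delta^2 e^{-\sqrt2\gamma(1-2\delta)z}\to 0$ as $z\to\infty$, giving a summable-in-$z$ tail uniformly in $s\in I_{t,A}$. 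For $z\leq -K$ one uses the crude bound \eqref{upbdB}, $u(m_s+z,s)\leq \frac{\sqrt s}{|m_s+z|\sqrt{2\pi}}e^{-(m_s+z)^2/2s}$, whose square, multiplied by the Gaussian density in $z$, produces (after completing the square in $z$) a Gaussian in $z$ with a mass that decays like $e^{-c K^2/t}$... — more carefully, the combined exponent in $z$ is $-\frac{z^2}{t-s} - \frac{(m_s+z)^2}{s} + (\text{linear})$, a negative-definite quadratic, so integrating over $z\leq -K$ gives an error that is $o_K(1)$ times the main term; one must just be careful to extract the $t^{3\gamma/2}e^{-2\gamma(1-\alpha)t}$ normalisation correctly, which comes from the $e^{-(t-s) - \frac{(\sqrt2\gamma(t-s))^2}{2(t-s)}} = e^{-(1+\gamma^2)(t-s)} = e^{-2\sqrt2\gamma(t-s)}$ wait $1+\gamma^2 = 1 + (\sqrt2-1)^2 = 4 - 2\sqrt2 = 2\sqrt2\gamma$, combined with $e^{-2\sqrt2\gamma\cdot(-z)}$... and the $s$-integral over $I_{t,A}$ of the Gaussian fluctuation contributes the $t^{3\gamma/2}$ — actually $t^{-3\gamma/2}$ from $s^{-3\gamma}$ and $t^{?}$... the bookkeeping here must be done with care but is routine given the structure.

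The main obstacle, I expect, is the ``$z\leq -K$, $s$ near $v_\alpha t$'' regime: here neither the trivial bound $u\leq 1$ nor Lemma~\ref{CHbdu} (which is for $z\to+\infty$) applies directly, and one genuinely needs the Gaussian large-deviation bound \eqref{upbdB} together with a clean completion of the square showing that pushing the ancestor far below its typical position $m_s$ costs more than it saves in the subsequent two independent subtrees having to reach only up to $\sqrt2\alpha t - (m_s+z)$, which is now \emph{higher} — so the cost is entirely in the Brownian excursion and is quadratic in $z/\sqrt{t}$. I would isolate this as a sublemma: there exist $C,c>0$ such that for all $t$ large, $s\in I_{t,A}$ and $K\leq |z|$,
\[
  \int_{-\infty}^{-K}\frac{e^{-(t-s)-\frac{(m_s+z-\sqrt2\alpha t)^2}{2(t-s)}}}{\sqrt{2\pi(t-s)}}\,u(m_s+z,s)^2\,\dd z \leq C e^{-cK^2}\cdot \frac{t^{3\gamma/2}e^{-2\gamma(1-\alpha)t}}{\sqrt t},
\]
then integrate over $s\in I_{t,A}$ (the $\sqrt t$ length and the $1/\sqrt t$ cancel appropriately, or rather combine to give a bounded constant after the Gaussian $s$-integral), and take $K\to\infty$ after $t\to\infty$. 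Combining the three regions yields \eqref{badtimecvg}.
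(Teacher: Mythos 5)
Your overall decomposition matches the paper's: split $U_2^\phi$ according to whether $s\in I_{t,A}$ and whether $z\in[-K,K]$, then beat each bad region via a priori bounds on $u$ together with Laplace asymptotics in $s$. Your treatment of the time regime $s\notin I_{t,A}$ (variational problem, strict convexity of $g_\alpha$ at $v_\alpha$, quadratic penalty of order $(s-v_\alpha t)^2/t$) correctly identifies the mechanism behind Lemmas~\ref{small2ndterm} and~\ref{small2ndtermbadtime}; the paper works through three sub-regimes ($|s-v_\alpha t|>\epsilon t$, then $\sqrt t\log t<|s-v_\alpha t|<\epsilon t$, then $A\sqrt t<|s-v_\alpha t|<\sqrt t\log t$), which your sketch elides, but the structure is right.

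The position regime ($s\in I_{t,A}$, $|z|>K$) is where the argument breaks, and the root cause is a sign error in the linear tilt. The Gaussian factor $e^{-(m_s+z-\sqrt{2}\alpha t)^2/2(t-s)}$ is centred at $z^*=\sqrt{2}\alpha t-m_s\approx-\sqrt{2}\gamma(t-s)<0$, so its linear term near $z=0$ is $e^{z^*z/(t-s)}\approx e^{-\sqrt{2}\gamma z}$, \emph{decaying} in $z$, not $e^{+\sqrt{2}\gamma z}$ as you wrote. Hence your first instinct for $z>K$ --- bound $u_\phi\leq 1$ and use the Gaussian decay, giving a tail $\lesssim e^{-\sqrt{2}\gamma K}$ --- was correct, and the ``correction'' to invoke Lemma~\ref{CHbdu} there is wrong: that lemma bounds $u(m_s-y,s)$ for $y\geq K_\delta>0$, i.e.\ the maximum \emph{below} its typical level, and can say nothing about $u(m_s+z,s)$ for $z>0$ (which tends to $1$, so no such decaying bound can exist). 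It is instead on $z<-K$ that the tilt $e^{-\sqrt{2}\gamma z}=e^{\sqrt{2}\gamma|z|}$ \emph{grows}, so $u\leq 1$ is useless and Lemma~\ref{CHbdu} is the essential input, giving $u_\phi(m_s+z,s)^2\leq c_\delta^2 e^{2\sqrt{2}\gamma(1-\delta)z}$ and hence a combined decay $e^{\sqrt{2}\gamma(1-2\delta)z}\to 0$ as $z\to-\infty$ for $\delta<1/2$. Your proposed bound \eqref{upbdB} there is inapplicable on the relevant scale: it requires $m_s+z<0$, i.e.\ $z\lesssim-\sqrt{2}v_\alpha t$, so for $-m_s<z<-K$ your argument provides no usable bound on $u$; and your intermediate estimate $e^{-cK^2/t}$ tends to $1$ as $t\to\infty$ for fixed $K$, so it could not produce $o_K(1)$ in the nested limit. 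Swapping the two cases and fixing the sign of the tilt recovers the paper's Lemma~\ref{goodtimebadposition+}.
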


The proof of Lemma~\ref{badtime+badposition} is postponed to Appendix~\ref{lems2}. A consequence of this result is that the asymptotic behaviour of $U^\phi_2(\sqrt{2}\alpha t, t)$ as $t \to \infty$ is captured by the following lemma, that is used to complete the proof of Theorem~\ref{highmax}.
\begin{lemma}\label{goodtime+position}
Let $\alpha \in (-\gamma,1)$, we set
$
  I_{t,a,b} = \left[ v_\alpha t + a \sqrt{t} , v_\alpha t + b\sqrt{t} \right] \cap [0,t]
$  for all $a < b \in \R$.
Then for all $a<b$ and $c < d$, we have
\begin{equation}\label{goodtimecvg}
  \lim_{t\to\infty}\frac{e^{2\gamma(1-\alpha)t}}{(v_\alpha t)^{3\gamma/2}}U_2(\sqrt{2}\alpha t, t,I_{t,a,b},[c,d])\\
  =\int_{a}^b\frac{e^{-\frac{2\sqrt{2}r^2}{1-\alpha}}}{\sqrt{2\pi\frac{1-\alpha}{\sqrt{2}}}}\dd r\int_{c}^de^{-\sqrt{2}\gamma}w_\phi(z)^2\dd z.
\end{equation}
\end{lemma}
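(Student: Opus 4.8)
The plan is to insert the change of variables $s = v_\alpha t + r\sqrt{t}$, $y = m_s + z$ into the defining integral of $U_2^\phi(\sqrt 2\alpha t, t, I_{t,a,b},[c,d])$ and identify the pointwise limit of the integrand times the normalizing factor $e^{2\gamma(1-\alpha)t}/(v_\alpha t)^{3\gamma/2}$, then justify passing the limit inside the integral. Write $t - s = \tfrac{1-\alpha}{\sqrt 2} t - r\sqrt t =: \sigma_t(r)$, which tends to infinity like $\tfrac{1-\alpha}{\sqrt 2}t$ for $r$ in the bounded set $[a,b]$. First I would expand the Gaussian exponent: with $m_s + z - \sqrt 2\alpha t = \sqrt 2 s - \tfrac{3}{2\sqrt2}\log s + z - \sqrt 2\alpha t = \sqrt 2(s - \alpha t) - \tfrac{3}{2\sqrt2}\log s + z$, and since $s - \alpha t = (v_\alpha - \alpha)t + r\sqrt t = \gamma t/\sqrt 2 \cdot \tfrac{1}{\cdots}$… more precisely $v_\alpha - \alpha = \tfrac{\gamma+\alpha}{\sqrt2} - \alpha = \tfrac{\gamma - \alpha(\sqrt2-1)}{\sqrt 2} = \tfrac{\gamma(1-\alpha)}{\sqrt 2}\cdot$… I would simply carry out the algebra: $\sqrt 2(s-\alpha t) = \sqrt 2 v_\alpha t - \sqrt 2 \alpha t + \sqrt 2 r\sqrt t = (\gamma+\alpha)t - \sqrt 2\alpha t + \sqrt 2 r \sqrt t = \gamma(1-\alpha)t \cdot$ (after using $\gamma(\sqrt2-1)=1$, so $(\gamma+\alpha) - \sqrt2\alpha = \gamma - (\sqrt2-1)\alpha$; and since $1=\gamma(\sqrt2-1)$ one checks $\gamma-(\sqrt2-1)\alpha = (\sqrt2-1)(\sqrt2\gamma-\alpha)$, hence $\sqrt2(s-\alpha t) = (\sqrt2-1)(\sqrt2\gamma-\alpha)t + \sqrt2 r\sqrt t$, and $(t-s) + \tfrac{(\sqrt2(s-\alpha t))^2}{2(t-s)}$ produces the exponential rate $2\gamma(1-\alpha)t$ together with the Gaussian term $\tfrac{2\sqrt2 r^2}{1-\alpha}$ in $r$ and the linear-in-$z$ term $\sqrt 2\gamma z$). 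The $-\tfrac{3}{2\sqrt2}\log s$ contribution, crossed with the leading $\sqrt 2(s-\alpha t)$ term in $2(t-s)$ in the expansion of the square, yields exactly the polynomial factor $s^{3\gamma/2} \sim (v_\alpha t)^{3\gamma/2}$ that cancels the normalization, while its self-square and its cross terms with $r\sqrt t$ and $z$ are $o(1)$.

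Concretely, after the substitution $\diff s = \sqrt t\,\diff r$ and collecting all factors, the integrand becomes
\[
  \frac{e^{2\gamma(1-\alpha)t}}{(v_\alpha t)^{3\gamma/2}}\cdot\frac{e^{-(t-s) - \frac{(m_s + z - \sqrt 2\alpha t)^2}{2(t-s)}}}{\sqrt{2\pi(t-s)}}\,u_\phi(m_s+z,s)^2\cdot\sqrt t
  \;\longrightarrow\;
  \frac{e^{-\frac{2\sqrt2 r^2}{1-\alpha}}}{\sqrt{2\pi\tfrac{1-\alpha}{\sqrt2}}}\,e^{-\sqrt 2\gamma z}\,w_\phi(z)^2,
\]
pointwise in $(r,z)\in[a,b]\times[c,d]$, where the limit $u_\phi(m_s+z,s) \to w_\phi(z)$ uses \eqref{uniformcvg} (valid since $z$ ranges over the compact set $[c,d]$ and $s\to\infty$). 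The statement in the lemma is slightly mistyped — the right-hand side should read $\int_c^d e^{-\sqrt2\gamma z}w_\phi(z)^2\diff z$, which is what this computation delivers.

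For the interchange of limit and integral over the bounded region $[a,b]\times[c,d]$, I would produce a dominating function. On this region $|z|\le\max(|c|,|d|)=:K'$, so $u_\phi(m_s+z,s)^2\le u_\phi(m_s+K',s)^2\le u(m_s+K',s)$, and by Lemma~\ref{CHbdu} (or already by \eqref{uniformcvg}, since $K'$ is a fixed constant) this is bounded uniformly in $s$ large; the Gaussian factor $e^{-(t-s)-\cdots}/\sqrt{2\pi(t-s)}$ times $\sqrt t\, e^{2\gamma(1-\alpha)t}/(v_\alpha t)^{3\gamma/2}$ is, for $r\in[a,b]$, bounded by a constant multiple of $e^{-c r^2}$ for $t$ large, as one reads off from the expansion of the square above (the error terms being uniformly $o(1)$ on the compact $r$-set). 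Hence dominated convergence applies and \eqref{goodtimecvg} follows.

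The main obstacle is the bookkeeping in the exponent: one must expand $(t-s) + \tfrac{(m_s+z-\sqrt2\alpha t)^2}{2(t-s)}$ as a polynomial-plus-logarithm in $t$, being careful that (i) the constant (in $t$) terms involving $z$ combine to give precisely $\sqrt2\gamma z$ and no spurious $z^2$ survives (the $z^2/2(t-s)$ term vanishes in the limit), (ii) the $\log s$ term produces exactly the power $3\gamma/2$ — this hinges on the identity $\tfrac{3}{2\sqrt2}\cdot\tfrac{\sqrt2(s-\alpha t)}{t-s}\to\tfrac{3}{2\sqrt2}\cdot\tfrac{\sqrt2\gamma(1-\alpha)/\sqrt2}{(1-\alpha)/\sqrt2} = \tfrac{3\gamma}{2}$ (using the same algebraic identities for $\gamma$), and (iii) the $r$-quadratic term has coefficient $\tfrac{2\sqrt2}{1-\alpha}$ and the Gaussian prefactor is $1/\sqrt{2\pi(1-\alpha)/\sqrt2}$, matching after dividing by $\sqrt t$ and recognizing $t-s\sim\tfrac{1-\alpha}{\sqrt2}t$. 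Once this Laplace-type expansion is done cleanly, the rest is routine.
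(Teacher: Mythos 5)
Your proposal matches the paper's proof in all essentials: both substitute $s = v_\alpha t + r\sqrt t$, identify the pointwise limit of the integrand via a Laplace-type expansion of the exponent (the paper packages this in the auxiliary function $g_\alpha(u) = (1-u) + (\alpha-u)^2/(1-u)$ and its Taylor expansion at $v_\alpha$, but the algebra is the same as what you carry out directly), use the locally uniform convergence $u_\phi(m_s+z,s) \to w_\phi(z)$, and conclude by dominated convergence. You are also right that the displayed statement has typos (the factor should read $e^{-\sqrt2\gamma z}$, and the left-hand side should be $U_2^\phi$ to match the $w_\phi$ on the right).
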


\begin{proof}
Recall that we can write
\[
  U^\phi_2(\sqrt{2}\alpha t, t, I_{t,a,b},[c,d]) =
  \int_{v_\alpha t+ a\sqrt{t}}^{v_\alpha t+b\sqrt{t}}\dd s \frac{e^{-(t-s)}}{\sqrt{2\pi (t-s)}}\int_{c}^d e^{-\frac{(\sqrt{2}\alpha t-m_{s}-z)^2}{2(t-s)}}u_\phi(m_{s}+z,s)^2\dd z.
\]
By the uniform convergence \eqref{uniformcvg}, we observe that uniformly in $s \in I_{t,a,b}$,
\[
  \int_{c}^d e^{-\frac{(\sqrt{2}\alpha t-m_{s}-z)^2}{2(t-s)}}u_\phi(m_{s}+z,s)^2\dd z \sim \int_{c}^d e^{-\frac{(\sqrt{2}\alpha t-m_{s}-z)^2}{2(t-s)}}w(z)^2\dd z,
\]
as $t \to \infty$. Then, with the change of variable $s=ut$, we have
\begin{multline*}
  U_2(\sqrt{2}\alpha t, t,I_{t,a,b},[c,d])\\
  \sim \int_{v_\alpha+\frac{a}{\sqrt{t}}}^{v_\alpha+\frac{b}{\sqrt{t}}} \dd u \frac{te^{-tg_\alpha(u)}}{\sqrt{2\pi t(1-u)}}\int_{c}^d e^{\frac{u-\alpha}{1-u}(\frac{3}{2}\log(ut)-\sqrt{2}z)-\frac{\left(\frac{3}{2\sqrt{2}}\log(ut)-z\right)^2}{2t(1-u)}}w(z)^2\dd z,
\end{multline*}
as $t \to \infty$, by setting
\begin{equation}
  \label{galpha}
  g_\alpha(u):u \in (0,1) \mapsto (1-u)+\frac{(\alpha-u)^2}{1-u}.
\end{equation}
Note that uniformly in $z\in[c,d]$ and in $u\in[v_\alpha+\frac{a}{\sqrt{t}},v_\alpha+\frac{b}{\sqrt{t}}]$, as $t\to\infty$ we have
\begin{align*}
  &\frac{\left(\frac{3}{2\sqrt{t}}\log(ut)-z\right)^2}{2t(1-u)}=o_t(1)\\
  \textrm{ and }\quad &\frac{u-c}{1-u}\left(\frac{3}{2}\log(ut)-\sqrt{2}z\right)=\frac{3\gamma}{2}\log(v_\alpha t)-\sqrt{2}\gamma z+o_t(1).
\end{align*}
It then follows that as $t \to \infty$,
\begin{equation}
  \label{eqn:intermediary}
  U_2(\sqrt{2}\alpha t, t,I_{t,a,b},[c,d])
  \sim \frac{(v_\alpha t)^{3\gamma/2}}{\sqrt{2\pi (1-v_\alpha)}}\int_{v_\alpha+\frac{a}{\sqrt{t}}}^{v_\alpha+\frac{b}{\sqrt{t}}}  e^{-tg_\alpha(u)}\sqrt{t}\dd u\int_{c}^d e^{-\sqrt{2}\gamma z}w(z)^2\dd z.
\end{equation}
We estimate that quantity by doing an asymptotic expansion of $g_\alpha$ around $v_\alpha$.

By change of variable $r=\sqrt{t}(u-v_\alpha)$, we have
\[
  \int_{v_\alpha+\frac{a}{\sqrt{t}}}^{v_\alpha+\frac{b}{\sqrt{t}}}  e^{-tg_\alpha(u)}\sqrt{t}\dd u = \int_{a}^be^{-tg_\alpha(v_\alpha+\frac{r}{\sqrt{t}})}\dd r.
\]
We note that $g_{\alpha}$ is smooth and strictly convex, and attains its minimum of $2 \gamma (1-\alpha)$ at $u = v_\alpha$. By Taylor's expansion at $v_\alpha$, we have as $|h|\downarrow0$,
\begin{equation}\label{Taylorg}
g_\alpha(v_\alpha+h)-g_\alpha(v_\alpha)=g'(v_\alpha)h+\frac{1}{2}g''(v_\alpha) h^2+o(h^2)=\frac{2\sqrt{2}}{1-\alpha}h^2+o(h^2).
\end{equation}
Hence,
\begin{align*}
  \int_{v_\alpha+\frac{a}{\sqrt{t}}}^{v_\alpha+\frac{b}{\sqrt{t}}}  e^{-tg_\alpha(u)}\sqrt{t}\dd u &= e^{-2\gamma (1-\alpha)t}\int_{a}^{b} e^{-\frac{2\sqrt{2}}{1-\alpha}r^2 + o_t(1)} \dd r 
  \sim e^{-2\gamma (1-\alpha)t}\int_{a}^{b} e^{-\frac{2\sqrt{2}}{1-\alpha}r^2 } \dd r,
\end{align*}
as $t \to \infty$ by dominated convergence. In view of \eqref{eqn:intermediary}, this is enough to complete the proof of \eqref{goodtimecvg}.
\end{proof}

We are now ready to prove Theorem~\ref{highmax}.
\begin{proof}[Proof of Theorem~\ref{highmax}]
For all $\phi \in \mathcal{C}_c^+(\R)$, $x_1,x_2 \in \R$ and $x_3 \geq 0$, we set
\begin{multline*}
  F_t(\phi;x_1,x_2,x_3)\\
  := \E\left( e^{-\int \phi \dd \mathcal{E}_t(\alpha)} ; \tfrac{\tau - (1-v_\alpha)t}{\sqrt{t}}\leq  x_1, X_\emptyset(\tau)\geq \sqrt{2}\alpha t-m_{t-\tau}-x_2, M_t\leq \sqrt{2}\alpha t-x_3 \right),
\end{multline*}
and we shall study the asymptotic behaviour of this quantity as $t \to \infty$. Applying the Markov property at time $\tau$, we have
\[
  F_t(\phi;x_1,x_2,x_3) = U_2^{\tau_{x_3}\phi}\left(\sqrt{2}\alpha t, t, \left[v_\alpha t - x_1 \sqrt{t}, t\right],(-\infty,x_2]\right),
\]
with $\tau_{x_3}(\phi) : y \mapsto \phi(y - x_3)$. Therefore, using Lemma~\ref{badtime+badposition} gives
\begin{multline*}
  \lim_{t \to \infty}e^{2\gamma(1-\alpha)t}(v_\alpha t)^{-3\gamma/2} F_t(\phi;x_1,x_2,x_3)\\ = \int_{-x_1}^A\frac{e^{-\frac{2\sqrt{2}r^2}{1-\alpha}}}{\sqrt{2\pi\frac{1-\alpha}{\sqrt{2}}}}\dd r\int_{-K}^{x_2} e^{-\sqrt{2}\gamma z}w_\phi(z-x_3)^2\dd z +o_A(1) + o_K(1),
\end{multline*}
with the $o_A(1)$ term being uniform in $K$, using that by definition, $w_{\tau_x \phi} = w_\phi(\cdot - x)$. Hence, letting $K \to \infty$ then $A \to \infty$, we conclude that
\begin{equation}
  \label{eqn:masterHigh2}
  \lim_{t \to \infty} \frac{e^{2\gamma(1-\alpha)t}}{(v_\alpha t)^{3\gamma/2}} F_t(\phi;x_1,x_2,x_3)
  =\int_{-x_1}^\infty\frac{e^{-\frac{2\sqrt{2}r^2}{1-\alpha}}}{\sqrt{2\pi\frac{1-\alpha}{\sqrt{2}}}}\dd r\int_{-\infty}^{x_2} e^{-\sqrt{2}\gamma z}w_\phi(z-x_3)^2\dd z.
\end{equation}
Using this result, we can now complete the proof of Theorem~\ref{highmax}.

We begin by proving \eqref{lowerprob+}. By \eqref{key}, we have
\[
  \P(M_t \leq \sqrt{2}\alpha t) = U_1(\sqrt{2}\alpha t, t) + U_2(\sqrt{2}\alpha t, t) =  U_2(\sqrt{2}\alpha t, t) + o(t^{3\gamma/2} e^{-(1+\alpha^2)t}),
\]
using Lemma~\ref{lemcvgI1}. Applying then Lemma~\ref{badtime+badposition}, for all $A, K > 0$ we have
\begin{multline*}
  \lim_{t \to \infty} (v_\alpha t)^{-3\gamma/2}e^{(1+\alpha^2)t}\P(M_t \leq \sqrt{2}\alpha t) \\
  = \lim_{t \to \infty} (v_\alpha t)^{-3\gamma/2}e^{(1+\alpha^2)t} F_t(0;A,K,0) + o_A(1) + o_K(1).
\end{multline*}
Hence, letting $K \to \infty$ then $A \to \infty$, by monotone convergence theorem, \eqref{eqn:masterHigh2} yields
\[
  \lim_{t \to \infty} (v_\alpha t)^{-3\gamma/2}e^{(1+\alpha^2)t}\P(M_t \leq \sqrt{2}\alpha t) = \int_\R \frac{e^{-\frac{2\sqrt{2}r^2}{1-\alpha}}}{\sqrt{2\pi\frac{1-\alpha}{\sqrt{2}}}}\dd r\int_\R  e^{-\sqrt{2}\gamma z}w(z)^2\dd z = C^{(1)}.
\]

We now turn to the proof of \eqref{jointcvg+}. By \eqref{eqn:masterHigh2}, for all $x_1,x_2\in\R$ and $x_3\in\R_+$, we have
\begin{multline*}
\P\left(\tau\leq \frac{1-\alpha}{\sqrt{2}}t+ x_1\sqrt{t}, X_\emptyset(\tau)\geq \sqrt{2}\alpha t-m_{t-\tau}-x_2, M_t\leq \sqrt{2}\alpha t-x_3\right)\\
=F_t(0;x_1,x_2,x_3) \sim (v_\alpha t)^{3\gamma/2}e^{-(1+\alpha^2)t} \int_{-x_1}^\infty\frac{e^{-\frac{2\sqrt{2}r^2}{1-\alpha}}}{\sqrt{2\pi\frac{1-\alpha}{\sqrt{2}}}}\dd r\int_{-\infty}^{x_2} e^{-\sqrt{2}\gamma z}w(z-x_3)^2\dd z,
\end{multline*}
which we can rewrite
\[
  \lim_{t \to \infty} \frac{F_t(0;x_1,x_2,x_3)}{\P(M_t \leq \sqrt{2}\alpha t)} = \frac{1}{2C^{(1)}} \int_{-\infty}^{x_1} \dd r \frac{e^{-\frac{2\sqrt{2}r^2}{1-\alpha}}}{\sqrt{2\pi\frac{1-\alpha}{4\sqrt{2}}}} \times e^{-\sqrt{2}\gamma x_3}\int_{-\infty}^{x_2-x_3}e^{-\sqrt{2}\gamma z}w(z)^2\dd z,
\]
which completes the proof of \eqref{jointcvg+}.

We finally turn to the proof of \eqref{processcvg+}, i.e.\@ the joint convergence in distribution of the extremal process seen from $\sqrt{2}\alpha t$, conditioned on $\{M_t \leq \sqrt{2}\alpha t\}$. By a straightforward adaptation of \cite[Proposition~2.2]{Bo17}, to obtain this weak convergence, it is enough to obtain for all $\phi \in \mathcal{C}_c^+(\R)$ and $x_1,x_2\in \R$, $x_3 \geq 0$ the convergence
\begin{multline*}
  \lim_{t \to \infty} \E\left[e^{-\int \phi \dd \mathcal{E}_t(\alpha)}; \tau\leq (1-v_\alpha)t+ x_1\sqrt{t}, X_\emptyset(\tau)\geq \sqrt{2}\alpha t-m_{t-\tau}-x_2\middle| M_t\leq \sqrt{2}\alpha t\right]\\
   = \E\left[ e^{-\int \phi \dd \mathcal{E}^-}; \chi \leq x_2\right] \P\left(\xi \leq x_1\sqrt{\tfrac{4\sqrt{2}}{1-\alpha}}\right).
\end{multline*}
By \eqref{eqn:masterHigh2} and \eqref{processcvg+}, we have immediately that
\[
  \lim_{t \to \infty} \frac{F(\phi,x_1,x_2,0)}{\P(M_t \leq \sqrt{2}\alpha t)} = \frac{1}{2C^{(1)}} \int_{-\infty}^{x_1} \dd r \frac{e^{-\frac{2\sqrt{2}r^2}{1-\alpha}}}{\sqrt{2\pi\frac{1-\alpha}{4\sqrt{2}}}} \times\int_{-\infty}^{x_2}e^{-\sqrt{2}\gamma z}w_\phi(z)^2\dd z.
\]
Observe that according to the definition of $\mathcal{E}^-$, writing $\mathcal{E}$ the limiting extremal process of the unconditioned branching Brownian motion, we have
\begin{align*}
  \E\left[  e^{-\int \phi \dd \mathcal{E}^-}; \chi \leq x_2 \right] &= \int_{-\infty}^{x_2} \E\left[e^{-\sum_{x\in\mathcal{E}}\phi(x-z)}\middle\vert \max \mathcal{E}\leq z\right]^2\P(\chi\in\dd z)\\
  &=  \frac{1}{2C^{(1)}} \int_{-\infty}^{x_2}e^{-\sqrt{2}\gamma z}w_\phi(z)^2\dd z,
\end{align*}
which is therefore enough to end the proof.
\end{proof}

Theorem~\ref{moderatemax} is obtained following a similar line of proof as Theorem~\ref{highmax}. The principal difference is that the Laplace method in the proof of Lemma~\ref{goodtime+position} has to be applied with a maximum obtained on the boundary of the interval of definition. All other estimates follow with straightforward modifications, by replacing $1-\alpha$ by $a_t/\sqrt{2}t$.

\section{The case \texorpdfstring{$\alpha<-\gamma$}{alpha smaller than gamma}: proof of Theorem~\ref{lowmax}}\label{max-}

We now treat the case of $\alpha < - \gamma$. We use in this section that, conditioned on the event $\{M_t \leq \sqrt{2}\alpha t\}$, with high probability no branching occurs before time $t-O(1)$. We use this observation to prove Theorem~\ref{lowmax}, using the same decomposition of $u_\phi(t,x)$ as $U^\phi_1 + U^\phi_2$ as in the previous section. Contrarily to the previous section, $U_1^\phi$ and $U_2^\phi$ are of the same order of magnitude.

Note that the asymptotic behaviour of $U^\phi_1(\sqrt{2}\alpha t,t)$ is given by Lemma~\ref{lemcvgI1}. To study the asymptotic behaviour of $U^\phi_2(\sqrt{2}\alpha t, t)$, we begin by observing that $t-\tau$ is tight conditioned on $\{M_t \leq \sqrt{2}\alpha t\}$.
\begin{lemma}\label{largetime}
Assume that $\alpha<-\gamma$, then for all $\phi \in \mathcal{C}_c^+(\R)$ we have
\begin{equation}\label{upbdlargetime}
  \lim_{A\to\infty}\lim_{t\to\infty}\sqrt{t}e^{(1+\alpha^2)t}U_2^\phi(\sqrt{2}\alpha t, t, [A, t],\R)=0.
\end{equation}
\end{lemma}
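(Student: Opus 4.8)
The plan is to reduce to the case $\phi\equiv0$ and then run a two–scale analysis of the $s$–integral. Since $\phi\geq0$ we have $u_\phi\leq u$, hence $U_2^\phi(\sqrt2\alpha t,t,[A,t],\R)\leq U_2(\sqrt2\alpha t,t,[A,t],\R)$, so it is enough to treat $U_2$. Recall that in the parametrisation of $U_2^\phi(z,t,I,B)$ the variable $s$ is the common length of the two subtrees created at the first branching, i.e.\ $s=t-\tau$; thus $[A,t]$ encodes the event $\{t-\tau\geq A\}$, and the lemma says that, conditioned on $\{M_t\leq\sqrt2\alpha t\}$, the first branching typically occurs within $O(1)$ of time $t$. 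Set $\beta:=-\alpha>\gamma$ and $c_\alpha:=(1-\alpha)^2-2$, noting that $c_\alpha>0$ \emph{is} the hypothesis $\alpha<-\gamma$, and that for the function $g_\alpha$ of \eqref{galpha} one has $g_\alpha(0)=1+\alpha^2$ and $g_\alpha'(0)=c_\alpha>0$. A Taylor expansion of the Gaussian exponent, valid for fixed $s$ as $t\to\infty$, gives
\[
\sqrt{t}\,e^{(1+\alpha^2)t}\,\frac{e^{-(t-s)-(m_s+z-\sqrt2\alpha t)^2/2(t-s)}}{\sqrt{2\pi(t-s)}}\;\longrightarrow\;\frac{1}{\sqrt{2\pi}}\,e^{(1-\alpha^2)s+\sqrt2\alpha m_s}\,e^{\sqrt2\alpha z}=\frac{1}{\sqrt{2\pi}}\,e^{-c_\alpha s}\,s^{-3\alpha/2}\,e^{\sqrt2\alpha z}\qquad(t\to\infty),
\]
so that, after the substitution $y=m_s+z$, the pointwise limit of the normalised integrand of $\sqrt{t}\,e^{(1+\alpha^2)t}U_2$ is $\tfrac1{\sqrt{2\pi}}e^{(1-\alpha^2)s+\sqrt2\alpha y}u(y,s)^2$, precisely the integrand appearing in \eqref{eqn:defPhi}.

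I would then split $[A,t]$ at $\delta t$ for a small fixed $\delta>0$. On $[A,\delta t]$ one passes to the limit by dominated convergence: the error terms in the expansion above are of order $s^{2}/t$, $(s+|z|)(\log s)/t$ and $z^{2}/t$, all $\leq C\delta s$ or negligible on the range of $z$ where $u(m_s+z,s)^2$ is not already exponentially small, so there the normalised integrand is dominated by $C\,e^{-(c_\alpha-C'\delta)s}\,s^{-3\alpha/2}\,e^{\sqrt2\alpha z}\,u(m_s+z,s)^2$ (the complementary range of $z$ being absorbed by crude bounds). Writing $m_s+z=\sqrt2\,a\,s$ and invoking Lemma~\ref{DSbdu} on each of the ranges $a\geq1$, $-\gamma\leq a<1$, $-\beta_0\leq a<-\gamma$, $a<-\beta_0$ (with $\beta_0$ a large constant), together with \eqref{basic} for the far $z$–tail, one checks by a direct computation that this majorant is integrable on $[A,\infty)\times\R$ with integral tending to $0$ as $A\to\infty$; this computation simultaneously establishes that $\Phi(\alpha)<\infty$. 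On $[\delta t,t]$ only crude bounds are needed: estimating the Gaussian factor by \eqref{basic}, $u(m_s+z,s)^2$ by the square of Lemma~\ref{DSbdu} (and \eqref{upbdB} for the far tail), and carrying out the resulting Gaussian integral in $z$, one finds that the contribution of a slice $s\approx\sigma t$ to $e^{(1+\alpha^2)t}U_2$ has exponential rate in $t$ at most $-(g_\alpha(\sigma)-1-\alpha^2)+C\epsilon$; since $g_\alpha$ is smooth and strictly convex with $g_\alpha(0)=1+\alpha^2$ and $g_\alpha'(0)=c_\alpha>0$, it is strictly increasing on $[0,1)$, whence $g_\alpha(\sigma)\geq g_\alpha(\delta)>1+\alpha^2$ for $\sigma\in[\delta,1)$ and the $[\delta t,t]$ contribution, multiplied by $\sqrt{t}\,e^{(1+\alpha^2)t}$, is $\leq e^{-(g_\alpha(\delta)-1-\alpha^2-C\epsilon)t+o(t)}\to0$ once $\epsilon$ is chosen small enough.

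Combining the two pieces,
\[
\limsup_{t\to\infty}\sqrt{t}\,e^{(1+\alpha^2)t}\,U_2(\sqrt2\alpha t,t,[A,t],\R)\;\leq\;\frac{1}{\sqrt{2\pi}}\int_A^\infty\dd s\int_\R\dd y\;e^{(1-\alpha^2)s+\sqrt2\alpha y}u(y,s)^2\;\longrightarrow\;0\quad(A\to\infty),
\]
which is the claim. The delicate step is the $[\delta t,t]$ estimate: the naïve bound $u(w,s)\leq\P(B_s\leq w)$ is far too lossy — it only yields the rate $-2|\alpha|\geq-(1+\alpha^2)$ — so one genuinely needs the sharp Derrida–Shi rate of Lemma~\ref{DSbdu}, and must carefully balance, inside the $z$–integral, the growth of the Gaussian weight against the decay of $u$ as its argument drops below $m_s$ (using also Lemma~\ref{CHbdu} in the intermediate regime). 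This balancing, and the strict monotonicity of $g_\alpha$ on $[0,1)$ underlying it, is exactly where the strict inequality $\alpha<-\gamma$ is essential.
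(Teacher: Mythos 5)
Your overall architecture — reduce to $\phi\equiv 0$ by domination, split the $s$-integral at $\delta t$, handle $[A,\delta t]$ by dominated convergence against the finite integral $\Phi(\alpha)$ and $[\delta t,t]$ by a Laplace-type estimate — is sound and genuinely different from the paper's proof. The paper keeps the full time-range $[A,t]$ and instead slices the \emph{spatial} integral by the value of $a=y/(\sqrt2 s)$ into four regimes ($a\geq 1$, $a\in[-\gamma,1)$, $a\in[K\alpha,-\gamma)$, $a<K\alpha$), bounding each resulting integral $\Sigma_1,\dots,\Sigma_4$ directly via \eqref{DSupbdu}, \eqref{basic} and the monotonicity of $g_\alpha$ near $0$. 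Your route is conceptually cleaner, but note that it is not logically free-standing: the dominated-convergence step needs the integrability $\int_0^\infty\int_\R e^{(1-\alpha^2)s+\sqrt2\alpha y}u(y,s)^2\,\dd y\,\dd s<\infty$, which is precisely \eqref{Phifini} in Lemma~\ref{smalltime}. (The paper proves that lemma separately and does not use it inside the proof of Lemma~\ref{largetime}.) You could also simplify your own argument: the ``Taylor expansion with error terms $O(s^2/t)$'' is not needed, because the algebra is exact,
\[
-(t-s)-\frac{(y-\sqrt2\alpha t)^2}{2(t-s)}=-(1+\alpha^2)t+(1-\alpha^2)s+\sqrt2\alpha y-\frac{(y-\sqrt2\alpha s)^2}{2(t-s)},
\]
so after multiplying by $\sqrt{t}\,e^{(1+\alpha^2)t}$ the integrand on $[A,\delta t]$ is dominated exactly by $(1-\delta)^{-1/2}(2\pi)^{-1/2}e^{(1-\alpha^2)s+\sqrt2\alpha y}u(y,s)^2$ and converges pointwise to the same expression without the prefactor; no perturbation of $\alpha$ or ``absorption by crude bounds'' is required.

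There is, however, one concrete inaccuracy in your $[\delta t,t]$ estimate. After substituting $s=\sigma t$, $y=\sqrt2 x\sigma t$ and invoking Lemma~\ref{DSbdu}, the exponential rate of the slice is $-\bigl(\inf_x F(\sigma,x)-(1+\alpha^2)\bigr)+O(\epsilon)$ where $F(\sigma,x)=(1-\sigma)+\frac{(\sigma x-\alpha)^2}{1-\sigma}+2\bar\Psi(x)\sigma$; this infimum is \emph{not} $g_\alpha(\sigma)=F(\sigma,1)$, because the optimal $x$ in the balance between the Gaussian weight and the decay of $u$ is generically interior (for instance with $\alpha=-1/2$ and $\sigma=0.1$ one finds $x^*\approx 0.28$ and $\inf_x F<g_\alpha(\sigma)$). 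So ``rate at most $-(g_\alpha(\sigma)-1-\alpha^2)+C\epsilon$'' overstates the decay. The conclusion you want is still true — one does have $\inf_x F(\sigma,x)>1+\alpha^2$ for every $\sigma\in(\delta,1)$, and verifying this strict inequality across the pieces $x\geq1$, $x\in[-\gamma,1)$, $x\in[-\beta,-\gamma)$, $x<-\beta$ is essentially the content of the paper's $\Sigma_1,\dots,\Sigma_4$ computation (e.g.\ in the range $x\in[-\gamma,1)$ the relevant functions are the $g_{\alpha,\epsilon}$ and $h$ appearing there, both equal to $1+\alpha^2$ at $u=0$ with strictly positive derivative). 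The monotonicity of $g_\alpha$ alone does not dispose of the interior-minimum case, so the ``balancing'' you correctly identify as delicate cannot be compressed to a one-line appeal to $g_\alpha'(0)=c_\alpha>0$; it has to be carried out case by case as in the appendix.
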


The proof of Lemma~\ref{largetime} is postponed to Appendix~\ref{lems3}. The next lemma completes the description of the asymptotic of $U_2^\phi$.

\begin{lemma}\label{smalltime}
If $\alpha<-\gamma$, then for any $x>0$ and $-\infty \leq c < d \leq \infty$ we have
\begin{equation}\label{cvgsmalltime}
\lim_{t\to\infty}\sqrt{t}e^{(1+\alpha^2)t}U^\phi_2(\sqrt{2}\alpha t, t,[0,x],(c,d))=\int_0^x \int_c^d e^{\sqrt{2}\alpha y}u_\phi(y,s)^2 e^{(1-\alpha^2)s} \dd y\dd s.
\end{equation}
Moreover, we have
\begin{equation}\label{Phifini}
\int_0^\infty \int_\R e^{(1-\alpha^2)s+\sqrt{2}\alpha y}u(y,s)^2\dd y \dd s <\infty.
\end{equation}
\end{lemma}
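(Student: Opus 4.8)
The plan is to start from the exact formula for $U_2^\phi$ and justify a pointwise-then-dominated passage to the limit. Writing out
\[
  U_2^\phi(\sqrt{2}\alpha t,t,[0,x],(c,d)) = \int_0^x e^{-s}\diff s \int_c^d \frac{e^{-(t-s)-\frac{(\sqrt{2}\alpha t - y)^2}{2(t-s)}}}{\sqrt{2\pi(t-s)}}\, u_\phi(y,s)^2 \diff y,
\]
I first observe that $s$ ranges only over the bounded set $[0,x]$, so $t-s \to \infty$ uniformly. For fixed $s$ and $y$ in the compact (or, after the domination step, arbitrary) range, a direct expansion gives
\[
  e^{-(t-s)}\,\frac{e^{-\frac{(\sqrt{2}\alpha t-y)^2}{2(t-s)}}}{\sqrt{2\pi(t-s)}}
  = \frac{e^{-(1+\alpha^2)t}}{\sqrt{2\pi t}}\,e^{(1-\alpha^2)s+\sqrt{2}\alpha y}\,(1+o_t(1)),
\]
where I use $\frac{(\sqrt 2\alpha t - y)^2}{2(t-s)} = \alpha^2 t + \alpha^2 s - \sqrt2\alpha y + o_t(1)$ for $y,s$ bounded. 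Multiplying by $\sqrt{t}e^{(1+\alpha^2)t}$ and integrating over the bounded domain $[0,x]\times[c,d]$, if that domain is bounded, then dominated convergence immediately yields the claimed limit $\int_0^x\int_c^d e^{(1-\alpha^2)s+\sqrt2\alpha y}u_\phi(y,s)^2\diff y\diff s$, using $u_\phi \leq 1$ as the dominating constant on the compact set.

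The only real work is to handle the tail in the spatial variable when $c=-\infty$ (and/or $d=+\infty$), i.e.\ to produce an integrable dominating function valid for all $t$ large. Here I split the $y$-integral at some level, say $y \in (-K,K)$ versus $|y| \geq K$. On $|y|\leq K$ the previous paragraph applies. For $y \geq K$ (only relevant when $d=\infty$), I bound $u_\phi(y,s)^2 \leq 1$ and $\frac{(\sqrt2\alpha t - y)^2}{2(t-s)} \geq \frac{(\sqrt2|\alpha|t + y)^2}{2t} \geq \alpha^2 t + \sqrt2|\alpha| y$ since $\alpha < 0$, which contributes $e^{-\sqrt2|\alpha|y}$ and is summable. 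For $y \leq -K$ the subtlety is that $u_\phi(y,s)^2$ itself decays: using the crude bound $u_\phi(y,s) \leq u(y,s) \leq \P(B_s \leq y) \leq e^{-y^2/2s}$ from \eqref{upbdB} together with the fact that, while $e^{\sqrt2\alpha y} = e^{-\sqrt2|\alpha||y|}$ blows up, the Gaussian factor $e^{-y^2/2s}$ with $s \leq x$ bounded dominates it for $|y|$ large; more precisely one checks $e^{\sqrt2\alpha y}\,e^{-y^2/2s} \leq e^{\sqrt2\alpha y - y^2/2x}$ which is integrable on $(-\infty,-K)$. Combining these gives a $t$-uniform integrable bound on $\sqrt t e^{(1+\alpha^2)t}$ times the integrand, so dominated convergence applies and \eqref{cvgsmalltime} follows for $c=-\infty$, $d=\infty$ as well.

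Finally, \eqref{Phifini} requires replacing the bounded time interval $[0,x]$ by $[0,\infty)$. For $s \geq 1$, say, I use the sharp a priori bounds on $u(\sqrt2\alpha' s, s)$ from Lemma~\ref{DSbdu}: splitting $\R$ into regions $y \geq -\gamma\sqrt2\, s$, $-\sqrt2\,s \leq y < -\gamma\sqrt2\,s$, and $y < -\sqrt2\,s$ (rescaling $y = \sqrt2 a s$), and in each region combining the bound on $u(y,s)^2$ with the exponential weight $e^{(1-\alpha^2)s+\sqrt2\alpha y}$, one checks the $y$-integral is bounded by $C e^{-\eta s}$ for some $\eta > 0$ whenever $\alpha < -\gamma$; indeed in the critical region $y \approx \sqrt2 a s$ with $a$ near $-\gamma$ the exponent behaves like $s\big(1-\alpha^2 - 2\gamma(1-a) + 2\alpha a\big)$, which is maximised over $a$ and one verifies this maximum is strictly negative precisely when $\alpha < -\gamma$ (this is the same computation as in the Derrida--Shi rate function: $1+\alpha^2 > 2\gamma(1-\alpha)$ for $\alpha < -\gamma$). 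The small-$s$ part $s \in [0,1]$ is handled as above with $u \leq 1$ and the Gaussian tail. Summing the geometric series in $s$ gives finiteness. The main obstacle is thus bookkeeping this last piece: getting a uniform-in-$s$ exponentially decaying bound on the inner integral, for which Lemma~\ref{DSbdu} (with the free parameter $\beta$ chosen according to $\alpha$) is exactly the tool needed.
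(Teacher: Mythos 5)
Your proof is correct, and the overall architecture matches the paper's: complete the square to isolate the $e^{-(1+\alpha^2)t}$ prefactor, pass the residual term to the limit for \eqref{cvgsmalltime}, and then prove \eqref{Phifini} by rescaling $y = \sqrt{2}as$ and combining the Derrida--Shi bounds of Lemma~\ref{DSbdu} region by region. The one place where you take a genuinely different route is the limiting argument for \eqref{cvgsmalltime}. You set up dominated convergence, which forces you to construct an integrable dominating function by splitting $y \geq K$ (using $u_\phi \leq 1$) from $y \leq -K$ (using the Gaussian tail bound \eqref{upbdB} on $u$). The paper instead notices that, after the exact identity
\[
  -(t-s)-\frac{(\sqrt{2}\alpha t - y)^2}{2(t-s)} = -(1+\alpha^2)t + (1-\alpha^2)s + \sqrt{2}\alpha y - \frac{(\sqrt{2}\alpha s - y)^2}{2(t-s)},
\]
the only remaining $t$-dependence is the factor $e^{-(\sqrt{2}\alpha s - y)^2/2(t-s)}$, which is \emph{monotone increasing} in $t$ and bounded by $1$; so the monotone convergence theorem applies directly, with no dominating function to produce, and finiteness of the resulting limit is supplied afterwards by \eqref{Phifini} (specifically the bound for $s$ in a bounded interval). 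Both routes are valid, but the monotonicity observation spares the two-region bookkeeping. Two small slips in your sketch, neither fatal: in the $y\le-K$ branch you write $e^{\sqrt{2}\alpha y}=e^{-\sqrt{2}|\alpha||y|}$, but the sign in the exponent should be $+$ (as you correctly observe the factor grows); and in the \eqref{Phifini} computation the exponent should carry $-4\gamma(1-a)$ rather than $-2\gamma(1-a)$ because $u$ appears squared --- the correction only makes the exponent more negative, so your conclusion stands a fortiori.
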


\begin{proof}
Observe that we can rewrite
\begin{align*}
U_2^\phi(\sqrt{2}\alpha t, t, [0, x],[c,d])=&\int_0^x\dd s\int_c^d\dd y \frac{e^{-(t-s)-\frac{(\sqrt{2}\alpha t-y)^2}{2(t-s)}}}{\sqrt{2\pi (t-s)}}u_\phi(y,s)^2\\
=&\frac{e^{-(1+\alpha^2)t}}{\sqrt{2\pi t}}\int_0^x \dd s \int_c^d \dd y \sqrt{\frac{t}{t-s}} e^{(1-\alpha^2)s}e^{\sqrt{2}\alpha y-\frac{(\sqrt{2}\alpha s-y)^2}{2(t-s)}}u_\phi(y,s)^2\\
\sim&\frac{e^{-(1+\alpha^2)t}}{\sqrt{2\pi t}}\int_0^x \dd s \int_c^d \dd y e^{(1-\alpha^2)s} e^{\sqrt{2}\alpha y-\frac{(\sqrt{2}\alpha s-y)^2}{2(t-s)}}u_\phi(y,s)^2,
\end{align*}
as $t \to \infty$. Then, by monotone convergence theorem, as $t \to \infty$ we have
\[
  \int_0^x \int_c^d e^{\sqrt{2}\alpha y-\frac{(\sqrt{2}\alpha s-y)^2}{2(t-s)}}u_\phi(y,s)^2e^{(1-\alpha^2)s} \dd y \dd s\rightarrow \int_0^x \int_c^d e^{\sqrt{2}\alpha y}u_\phi(y,s)^2e^{(1-\alpha^2)s} \dd y\dd s,
\]
which completes the proof of \eqref{cvgsmalltime}.

The rest of the proof is devoted to show that $\int_0^\infty \int_\R e^{(1-\alpha^2)s+\sqrt{2}\alpha y}u(y,s)^2\dd s \dd y < \infty$. As a first step, we bound for any $s \geq 0$ the quantity $I_s := \int_\R e^{\sqrt{2}\alpha y} u(y,s)^2 \dd y$. First, by \eqref{upbdB}, for all $y \leq 0$ we have $0 \leq u(y,s)\leq \frac{s^{1/2}}{|y|} e^{-y^2/2s} \wedge 1$, therefore
\begin{align*}
  I_s &\leq \int_{-\infty}^1  e^{\sqrt{2}\alpha y} \dd y + \int_1^\infty e^{\sqrt{2}\alpha y} e^{-y^2/s} \dd y \leq \frac{e^{\sqrt{2}\alpha}}{-\sqrt{2}\alpha} + s \sqrt{\pi s} e^{\alpha^2 s},
\end{align*}
by \eqref{base}. As a result, for all $A > 0$, we have
\begin{equation}
  \label{eqn:firstBit}
  \int_0^A \int_\R e^{(1-\alpha^2)s+\sqrt{2}\alpha y}u(y,s)^2\dd s \dd y = \int_0^A  e^{(1-\alpha^2)s} I_s \dd s < \infty.
\end{equation}

To complete the proof of \eqref{Phifini}, it is enough to bound $\int_A^\infty\int_\R e^{(1-\alpha^2)s+\sqrt{2}\alpha y}u(y,s)^2\dd y \dd s$ for $A\geq1$ large enough. Recall that $u(y,s) = \P(M_s \leq y)$ is close to $1$ for $y \gg \sqrt{2}s$ and to $0$ for $y \ll \sqrt{2}s$. Observe that
\begin{align}
  \int_A^\infty\int_{\sqrt{2}s}^\infty e^{(1-\alpha^2)s+\sqrt{2}\alpha y}u(y,s)^2\dd y \dd s
  \leq & \int_A^\infty\int_{\sqrt{2}s}^\infty e^{(1-\alpha^2)s+\sqrt{2}\alpha y} \dd y \dd s \nonumber \\
  = & \frac{1}{-\sqrt{2}\alpha} \int_A^\infty e^{(1-\alpha^2)s + 2\alpha s} \dd s < \infty, \label{eqn:first}
\end{align}
using that for all $\alpha < -\gamma$, $1 -\alpha^2 + 2\alpha < 0$. Therefore, we only need to bound
\begin{equation*}
 \int_A^\infty\int_{-\infty}^{\sqrt{2}s}e^{(1-\alpha^2)s+\sqrt{2}\alpha y}u(y,s)^2\dd y \dd s
  =\int_A^\infty\int_{-\infty}^{1}e^{(1-\alpha^2)s+2x\alpha s}u(\sqrt{2}xs,s)^2\sqrt{2}s\dd x \dd s,
\end{equation*}
by change of variable $y=\sqrt{2}sx$. We now apply Lemma~\ref{DSbdu} to bound $u(\sqrt{2}xs,s)^2$ for $s$ large enough, depending on the region to which $x$ belongs.

Let $\epsilon > 0$, that will be taken small enough later on, and $\beta  > 1 -\alpha /2 > 1$. We assume that $A > t_{\epsilon,\beta}$, and we bound the above integral using \eqref{DSupbdu}. First, for $x$ in the interval $[-\gamma,1]$, we have
\begin{align*}
  &\int_A^\infty \int_{-\gamma}^{1}e^{(1-\alpha^2)s+2\alpha xs} u(\sqrt{2}x s,s)^2 \sqrt{2}s \dd x \dd s\\
  \leq &\int_A^\infty \int_{-\gamma}^{1}e^{(1-\alpha^2)s+2\alpha xs} e^{-4 \gamma (1-x)s + 2\epsilon s} \sqrt{2}s \dd x \dd s \\
  \leq &\int_A^\infty \sqrt{2}s e^{(1-\alpha^2-4 \gamma + 2 \epsilon) s} \int_{-\gamma}^{1} e^{(2\alpha + 4 \gamma)xs} \dd x \dd s\\
  \leq &
    \begin{cases}
      \frac{1}{\sqrt{2}(\alpha+2\gamma)} \int_A^\infty e^{(1-\alpha^2+2\alpha)s+2\epsilon s}\dd s, &\textrm{ if }-2\gamma<\alpha<-\gamma;\\
      \sqrt{2}(1+\gamma)\int_A^\infty se^{(1-\alpha^2+2\alpha)s+2\epsilon s}\dd s, &\textrm{ if }\alpha=-2\gamma;\\
      \frac{1}{-\sqrt{2}(\alpha+2\gamma)} \int_A^\infty e^{(1-\alpha^2-4\gamma-2\alpha\gamma-4\gamma^2)s+2\epsilon s}\dd s, &\textrm{ if }\alpha<-2\gamma.
    \end{cases}
\end{align*}
As $1-\alpha^2-4\gamma-2\alpha\gamma-4\gamma^2<0$ for  all $\alpha < -2\gamma$, we conclude that for all $\epsilon>0$ small enough,
\begin{equation}
  \label{eqn:second}
  \int_A^\infty \int_{-\gamma}^{1}e^{(1-\alpha^2)s+2\alpha xs} u(\sqrt{2}x s,s)^2 \sqrt{2}s \dd x \dd s < \infty.
\end{equation}
We then consider the case $x \in [-\beta,-\gamma]$. In fact
\begin{align}
  &\int_A^\infty \int_{-\beta}^{-\gamma}e^{(1-\alpha^2)s+2\alpha xs} u(\sqrt{2}x s,s)^2 \sqrt{2}s \dd x \dd s\nonumber\\
  \leq & \int_A^\infty \int_{-\beta}^{-\gamma}e^{(1-\alpha^2)s+2\alpha xs} e^{-2(1 + x^2)s + 2 \epsilon s} \sqrt{2}s \dd x \dd s\nonumber\\
  \leq & \int_A^\infty  \sqrt{2}s e^{-(1+\alpha^2/2 - 2 \epsilon)s} \int_{-\beta}^{-\gamma} e^{-2(x -\alpha/2)^2 s} \dd x \dd s\nonumber\\
  \leq &\int_A^\infty  \sqrt{\frac{\pi s}{2}} e^{-(1+\alpha^2/2 - 2 \epsilon)s} \dd s  < \infty, \label{eqn:third}
\end{align}
for all $\epsilon<1/2$. Similarly, for $x < -\beta$:
\begin{align*}
  \int_A^\infty \int_{-\infty}^{-\beta}e^{(1-\alpha^2)s+2\alpha xs} u(\sqrt{2}x s,s)^2 \sqrt{2}s \dd x \dd s
  \leq &\int_A^\infty \sqrt{2} s e^{(1 -\alpha^2/2) s} \int_{-\infty}^{-\beta} e^{-2 (x-\alpha/2)^2s} \dd x \dd s\\
  \leq &\int_A^\infty \sqrt{2} s e^{(1 -\alpha^2/2) s} \int_{-\infty}^{-\beta-\alpha/2} e^{-2 y^2s} \dd y \dd s.
\end{align*}
Using that $- \beta -\alpha/2 < -1$, we have for all $s > 0$:
\[
  \int_{-\infty}^{-\beta-\alpha/2} e^{-2 y^2s} \dd y \leq \int_{1}^{\infty} e^{-2y^2 s} \dd y \leq \frac{1}{4 s} e^{-2s},
\]
by \eqref{basic}, yielding
\begin{equation}
  \label{eqn:fourth}
  \int_A^\infty \int_{-\beta}^{-\gamma}e^{(1-\alpha^2)s+2\alpha xs} u(\sqrt{2}x s,s)^2 \sqrt{2}s \dd x \dd s \leq \frac{1}{2\sqrt{2}} \int_A^\infty e^{(-1 -\alpha^2/2) s} < \infty.
\end{equation}
Consequently, using (\ref{eqn:first}--\ref{eqn:fourth}), for any $A>0$ large enough
\[
  \int_A^\infty\int_{-\infty}^{1}e^{(1-\alpha^2)s+2x\alpha s}u^2(\sqrt{2}xs,s)\sqrt{2}s\dd x \dd s<\infty,
\]
which, with \eqref{eqn:firstBit}, completes the proof of \eqref{Phifini}.
\end{proof}

We now complete the proof of Theorem~\ref{lowmax} by proving the joint convergence in law of the first branching time and position, and the shifted extremal process, conditioned on $\{M_t \leq \sqrt{2}\alpha t\}$.
\begin{proof}[Proof of Theorem~\ref{lowmax}]
We begin by observing that by Lemma \ref{lemcvgI1}, we have
\begin{multline}
  \E\left( e^{-\int \phi \dd \mathcal{E}_t(\alpha)} ; \tau \geq t, M_t\leq \sqrt{2}\alpha t-z \right)\\
  =\E\left( e^{-\phi(B_t-\sqrt{2}\alpha t)} ; B_t \leq \sqrt{2}\alpha t - z \right) \sim \frac{e^{-(1+\alpha^2)t}}{\sqrt{2 \pi t}} \int_{-\infty}^{-z} e^{-\phi(y) - \sqrt{2}\alpha y} \dd y, \label{eqn:noBranchingLow}
\end{multline}
as $t \to \infty$. Similarly to the proof of Theorem~\ref{highmax}, the key to the proof of this theorem is the determination of the asymptotic behaviour of
\begin{multline*}
  F_t(\phi;x_1,x_2,x_3)\\
  :=  \E\left( e^{-\int \phi \dd \mathcal{E}_t(\alpha)} ; \tau \in [ t - x_1,t], X_\emptyset(\tau \wedge t) \leq \sqrt{2}\alpha t  - x_2, M_t\leq \sqrt{2}\alpha t-x_3 \right),
\end{multline*}
as $t \to \infty$.

Using the branching property at time $\tau$, we observe that
\[
  F_t(\phi;x_1,x_2,x_3) = U_2^{\tau_{x_3} \phi} (\sqrt{2}\alpha t, t, [0,x_1],[x_2,\infty) ),
\]
and therefore, by Lemma~\ref{smalltime} we have
\begin{equation}
  \label{eqn:masterLow}
  \lim_{t \to \infty} t^{1/2} e^{(1+\alpha^2)t} F_t(\phi;x_1,x_2,x_3) = \int_0^{x_1} e^{(1-\alpha^2)s}\int_{x_2}^{\infty} e^{\sqrt{2}\alpha y} u_\phi(y - x_3,s)^2 \dd y \dd s.
\end{equation}
We now use this formula to prove Theorem \ref{lowmax}. 

We begin with the proof of \eqref{lowerprob-}. Observe that by \eqref{key+}, we have
\[
  \P(M_t \leq \sqrt{2}\alpha t) = U_1(\sqrt{2}\alpha t, t)+U_2(\sqrt{2}\alpha t, t).
\]
Using \eqref{eqn:masterLow} with $x_2 = -\infty$ and $x_1 = A$ together with Lemma \ref{largetime}, we have letting $t \to \infty$ then $A \to \infty$:
\[
  \lim_{t \to \infty} t^{1/2} e^{(1+\alpha^2)t}  U_2(\sqrt{2}\alpha t, t) = \int_0^{\infty}e^{(1-\alpha^2)s} \int_{\R} e^{\sqrt{2}\alpha y} u(y - x_3,s)^2 \dd y \dd s,
\]
which, together with \eqref{eqn:noBranchingLow}, implies $ \P(M_t \leq \sqrt{2}\alpha t) \sim \frac{\Phi(\alpha)}{\sqrt{4\pi t}} e^{-(1+\alpha^2)t}$ as $t \to \infty$.

Then, to prove \eqref{jointcvg-}, it is enough to observe that
\[
  \lim_{t \to \infty} \frac{F_t(0;x_1,x_2,x_3)}{\P(M_t \leq \sqrt{2}\alpha t)} = \frac{\sqrt{4\pi}}{\Phi(\alpha)} \int_0^{x_1} e^{(1-\alpha^2)s} e^{\sqrt{2}\alpha x_3} \int_{-\infty}^{x_2-x_3} e^{\sqrt{2}\alpha y} u_\phi(y,s)^2 \dd y \dd s,
\]
by \eqref{eqn:masterLow}. This proves that $(t - t\wedge \tau, \sqrt{2}\alpha t - X_\emptyset(t\wedge \tau), \sqrt{2}\alpha t - M_t)$ jointly converge in distribution as $t \to \infty$.

We now prove the convergence of the extremal process $\mathcal{E}_t(\alpha)$. For any $\phi\in \mathcal{C}_c^+(\R)$, using again the decomposition at first branching time of the branching Brownian motion, we have
\[
  \E\left[e^{-\int \phi \dd\mathcal{E}_t(\alpha)}; M_t\leq\sqrt{2}\alpha t\right] = U^\phi_1(\sqrt{2}\alpha t, t) + U^\phi_2(\sqrt{2}\alpha t, t),
\]
which, by \eqref{eqn:masterLow} and \eqref{lowerprob-}, yields
\begin{multline*}
  \lim_{t \to \infty} \E\left[e^{-\int \phi \dd\mathcal{E}_t(\alpha)}\middle| M_t\leq\sqrt{2}\alpha t\right]\\
   = \frac{\sqrt{2}}{\Phi(\alpha)} \left( \int_{-\infty}^0 e^{-\phi(z)-\sqrt{2}\alpha z}\dd z +\int_0^\infty\dd s\int_\R e^{\sqrt{2}\alpha z+(1-\alpha^2)s}u_\phi(z,s)^2\dd z\right).
\end{multline*}
As $u(z,s) = \E\left( e^{-\sum_{u \in N(t)} \phi(X_u(s)-z)}; M_s \leq z \right)$, we observe that we can rewrite this limit as
\begin{multline*}
  \int_{\R_+\times\R}\E\left[\exp\left(-\sum_{u\in N(s)}\phi(X_u(s)-z)\right)\middle\vert M_{s}\leq z \right]^2\P(\xi_\alpha\in\dd s, \chi_\alpha\in\dd z)\\
  =\E[e^{-\int \phi \dd \mathcal{E}_\infty(\alpha)}],
\end{multline*}
proving that $\mathcal{E}_t(\alpha)$ converges weakly to $\mathcal{E}_\infty(\alpha)$ in $\P(\cdot|M_t \leq \sqrt{2}\alpha t)$-distribution. In the same spirit as in the proof of Theorem~\ref{highmax}, one could obtain the joint convergence in distribution of $\mathcal{E}_t(\alpha)$ with $\left(t-t\wedge \tau, X_\emptyset(t\wedge \tau)\right)$, thus completing the proof of \eqref{processcvg-}.
\end{proof}

\section{The critical case \texorpdfstring{$\alpha=-\gamma$}{alpha equal to minus gamma}: Proof of Theorem \ref{criticalmax}}
\label{maxc}

We consider in this section the case $\alpha = -\gamma$ and prove Theorem \ref{criticalmax}. According to this theorem, the first branching time should occur around time $t - O(t^{1/2})$ with high probability. We use again \eqref{key} to compute the asymptotic behaviour of $\P(M_t \leq -\sqrt{2}\gamma t)$, and decompose the integral \eqref{eqn:keyPhi} onto sub-intervals of interest to prove the joint convergence in distribution of the first branching time and position and the extremal process of the branching Brownian motion.

Recall that, by \eqref{key+}, we have $u(-\sqrt{2}\gamma t, t)=U_1(-\sqrt{2}\gamma t, t)+U_2(-\sqrt{2}\gamma t,t)$, and by Lemma \ref{lemcvgI1}, we have, as $t \to \infty$,
\begin{equation}
  \label{eqn:oneObs}
  U_1(-\sqrt{2}\gamma t,t) \sim \frac{1}{\sqrt{4\pi t}}e^{-(1+\gamma^2)t} \ll t^{3\gamma/4}e^{-(1+\gamma^2)t}.
\end{equation}
Therefore, to complete the proof of \eqref{lowerprobc}, it is enough to show that
\begin{equation}\label{1birthc}
  U_2(-\sqrt{2}\gamma t, t)\sim C^{(2)} t^{3\gamma/4}e^{-(1+\gamma^2)t}, \text{ as } t \to \infty.
\end{equation}
Moreover, for any $0 < a < b < t$ and $c < d$, we set
\[
U_2(-\sqrt{2}\gamma t, t, [a,b],[c,d]):=\int_{a}^{b} e^{-(t-s)}ds\int_c^d \frac{e^{-\frac{(z+m_s+\sqrt{2}\gamma t-a_t)^2}{2(t-s)}}}{\sqrt{2\pi(t-s)}}u(m_s+z,s)^2\dd z.
\]
Equation \eqref{1birthc} follows from the next two lemmas.

\begin{lemma}
For all $A, K>0$, we have
\label{badtimecA}
\[
  \limsup_{t\to\infty} \frac{e^{(1+\gamma^2)t}}{t^{3\gamma/4}} \left|U_2(-\sqrt{2}\gamma t, t)-U_2(-\sqrt{2}\gamma t, t, [\sqrt{t}/A, A\sqrt{t}],[-K,K])\right| = o_{A}(1)+o_{K}(1).
\]
\end{lemma}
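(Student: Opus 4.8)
The plan is to write the difference $U_2(-\sqrt{2}\gamma t,t)-U_2(-\sqrt{2}\gamma t,t,[\sqrt t/A,A\sqrt t],[-K,K])$ as the integral of the same kernel over the three "bad" sets $R_1=\{0\le s\le\sqrt t/A\}\times\R$ (first branching too close to $t$), $R_2=\{A\sqrt t\le s\le t\}\times\R$ (first branching too far from $t$), and $R_3=\{\sqrt t/A\le s\le A\sqrt t\}\times(\R\setminus[-K,K])$ (subtree maximum at an atypical level), and to bound each, after multiplication by $e^{(1+\gamma^2)t}/t^{3\gamma/4}$, by a quantity tending to $0$ as $A,K\to\infty$. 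The book-keeping hinges on the elementary identities $2\gamma+\gamma^2=1$ and $(1+\gamma)^2=2$, which give the clean expansion
\[
  \frac{(\sqrt{2}\gamma t+m_s)^2}{2(t-s)}=\gamma^2 t+s+\frac{2s^2}{t-s}-\frac{3\gamma}{2}\log s+o(1)\qquad(1\le s\le t/2),
\]
so that, together with $e^{-(t-s)}=e^{-t}e^{s}$, the ``leading factor'' $\tfrac{e^{-(t-s)}}{\sqrt{2\pi(t-s)}}e^{-(\sqrt{2}\gamma t+m_s)^2/(2(t-s))}$ is at most $\tfrac{C}{\sqrt t}e^{-(1+\gamma^2)t}s^{3\gamma/2}e^{-s^2/(t-s)}$ uniformly for $1\le s\le\eta t$ (any fixed small $\eta>0$); under $s=\sqrt t\,r$ this is the profile $t^{3\gamma/4}e^{-(1+\gamma^2)t}r^{3\gamma/2}e^{-r^2}$ — the same profile (with $e^{-2r^2}$) responsible for $C^{(2)}$ and the power $t^{3\gamma/4}$ in \eqref{lowerprobc} — whose mass on $\{r\notin[1/A,A]\}$ vanishes as $A\to\infty$. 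One also records that, writing $g(u):=(1+\gamma^2)+\tfrac{2u^2}{1-u}$ (the function $g_{-\gamma}$ of \eqref{galpha}), one has $t\,g(s/t)=(1+\gamma^2)t+\tfrac{2s^2}{t-s}$, and $g$ is strictly increasing on $[0,1)$ with $g'(0)=0$, $g''(0)=4$.

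For $R_3$ (the $o_K(1)$ term) I would change variables so that, up to a bounded multiplicative factor, the $z$-dependence of the Gaussian is $e^{-\sqrt{2}\gamma z}$, bound $u(m_s+z,s)^2\le 1$ for $z>K$, and apply Lemma~\ref{CHbdu} — legitimate since $s\ge\sqrt t/A\to\infty$ — to get $u(m_s+z,s)^2\le c_\delta^2 e^{2\sqrt{2}\gamma(1-\delta)z}$ for $z<-K$ with $\delta<1/2$, $K\ge K_\delta$. Then $\int_{|z|>K}e^{-\sqrt{2}\gamma z}u(m_s+z,s)^2\,\dd z\le C_\delta e^{-cK}$ uniformly in $s\in[\sqrt t/A,A\sqrt t]$, and integrating the leading factor in $s$ gives $\le C_\delta e^{-cK}t^{3\gamma/4}e^{-(1+\gamma^2)t}$. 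For $R_1$ (the $o_A(1)$ term) I would put $x=m_s+z$ and split the inner integral at $x=0$ and $x=\sqrt{2}s$: the part $x\ge\sqrt{2}s$ is a Gaussian tail, contributing $O(A^{-1})e^{-(1+\gamma^2)t}$; the part $x<0$ is handled by Lemma~\ref{CHbdu} (for $s$ large, where $m_s-x\ge m_s$ is large; for bounded $s$ by the crude bound $u(x,s)\le\P(B_s\le x)$), contributing $O(t^{-1/2})e^{-(1+\gamma^2)t}$; and the decisive part $0\le x<\sqrt{2}s$ uses $e^{-(\sqrt{2}\gamma t+x)^2/(2(t-s))}\le e^{-\gamma^2 t-\gamma^2 s}e^{-\sqrt{2}\gamma x}$ together with $\int_0^\infty e^{-\sqrt{2}\gamma x}u(x,s)^2\,\dd x\le C_\delta(1+s^{3\gamma/2})e^{-2\gamma s}$ (again Lemma~\ref{CHbdu}, with $u\le1$ on $[m_s,\sqrt{2}s]$). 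Since $\gamma^2+2\gamma=1$ all the $s$-exponentials cancel and this part contributes $\le\tfrac{C_\delta}{\sqrt t}e^{-(1+\gamma^2)t}\int_0^{\sqrt t/A}(1+s^{3\gamma/2})\,\dd s\le C_\delta A^{-(3\gamma/2+1)}t^{3\gamma/4}e^{-(1+\gamma^2)t}$. (For $s\le1$, where $m_s$ misbehaves, one simply uses $u\le\P(B_s\le\cdot)$; the contribution is $O(e^{-(1+\gamma^2)t})$.)

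For $R_2$ I would split at $s=\eta t$ for a fixed small $\eta>0$. On $[A\sqrt t,\eta t]$ the subtree fraction $s/t$ is still $o(1)$ and the inner integral is estimated exactly as in $R_1$ — one only needs $\eta,\delta$ small enough that $\tfrac{\sqrt{2}\gamma t+m_s}{t-s}<2\sqrt{2}\gamma(1-\delta)$, i.e. $\tfrac{\gamma+\eta}{1-\eta}<2\gamma$ — giving (after $s=\sqrt t\,r$) $\le C_\delta\,t^{3\gamma/4}e^{-(1+\gamma^2)t}\int_A^\infty r^{3\gamma/2}e^{-r^2}\,\dd r$, which is $o_A(1)$ after normalization. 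On $[\eta t,t]$ — the \emph{only} place where the subtree runs for a time comparable to $t$ — the $m_s$-expansion is useless, and I would instead bound $u(-\sqrt{2}\gamma t-y,s)^2$ via Lemma~\ref{DSbdu} in terms of the Derrida–Shi rate function $\psi$ at $a=(-\sqrt{2}\gamma t-y)/(\sqrt{2}s)$ and carry out the Laplace integral in $y$. One checks that $a\mapsto 2\psi(a)s-\tfrac{(\gamma t+as)^2}{t-s}$ attains its maximum at $a=1$ with value $-\tfrac{(\gamma t+s)^2}{t-s}=(t-s)-t\,g(s/t)$, so $\int_\R\P(B_{t-s}\in\dd y)\,u(-\sqrt{2}\gamma t-y,s)^2\le C e^{2\epsilon s}e^{(t-s)-t g(s/t)}$, whence $e^{-(t-s)}$ times it is $\le C e^{2\epsilon s}e^{-t g(s/t)}$; since $t g(s/t)-(1+\gamma^2)t=\tfrac{2s^2}{t-s}\ge\tfrac{2\eta^2}{1-\eta}t$ on $[\eta t,t]$, choosing $\epsilon$ small relative to $\eta$ makes this $\le e^{-(1+\gamma^2+\eta^2)t}$, negligible after normalization. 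Summing the three regions and letting $t\to\infty$ then $A,K\to\infty$ gives the claim.

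The step I expect to be the genuine obstacle is this last one on $[\eta t,t]$: it requires the full large-deviation/Laplace machinery behind Derrida–Shi to match the exponential rate of $\int_\R\P(B_{t-s}\in\dd y)u(-\sqrt{2}\gamma t-y,s)^2$ to $e^{-t g(s/t)}$ uniformly in $s$, together with the identification $a=1$ of the minimiser of the variational problem. Elsewhere the only real point of care is to keep enough terms of the Gaussian exponent that the sharp moderate-deviation bound $u(m_s-w,s)\le c_\delta e^{-\sqrt{2}\gamma(1-\delta)w}$ of Lemma~\ref{CHbdu} is actually used: replacing it by the naive $u(x,s)\le\P(B_s\le x)$ discards the exponential branching cost and already makes the $s$-integral in $R_1$ diverge like $e^{c\sqrt t/A}$.
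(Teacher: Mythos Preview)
Your decomposition into $R_1,R_2,R_3$ and the overall strategy match the paper's (Lemmas~\ref{badtimec} and~\ref{badpositionc}), and your treatment of $R_1$, $R_3$, and $R_2\cap\{s\le\eta t\}$ is correct. There is, however, a genuine error in the Laplace step on $[\eta t,t]$.

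Your claim that $a\mapsto F(a):=2\psi(a)s-\tfrac{(\gamma t+as)^2}{t-s}$ is maximised at $a=1$ is \emph{false} once $s>\tfrac{\gamma}{1+2\gamma}\,t\approx 0.227\,t$. On the linear piece $a\in[-\gamma,1]$ one has $F'(a)=4\gamma s-\tfrac{2s(\gamma t+as)}{t-s}$, and $F'(1)<0$ precisely when $s>\tfrac{\gamma}{1+2\gamma}t$; the maximiser is then the interior point $a^*=\gamma(t-2s)/s$, with
\[
  F(a^*)=-4\gamma(1+\gamma)s=-2(1+\gamma^2)s\;>\;F(1)=-\tfrac{(\gamma t+s)^2}{t-s}.
\]
So your bound $e^{F(1)}$ underestimates the integrand and the inequality you write is in the wrong direction on $[0.227\,t,\,t]$. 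Fortunately the conclusion survives: one checks $-(t-s)+F(a^*)=-t-(1+2\gamma^2)s<-(1+\gamma^2)t$ whenever $s>\tfrac{\gamma^2}{1+2\gamma^2}t\approx 0.128\,t$, which holds throughout this range, so the contribution is still exponentially negligible (after absorbing the $e^{2\epsilon s}$ slack). You should replace the ``max at $a=1$'' claim by this two-case analysis.

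The paper avoids this variational subtlety altogether: on $[\epsilon t,t]$ it does \emph{not} invoke the full rate function $\psi$ via Lemma~\ref{DSbdu}, but keeps the split in $z$ at $-K$ and uses only $u\le 1$ for $z\ge -K$ and the linear moderate-deviation bound of Lemma~\ref{CHbdu} for $z<-K$. Both pieces then reduce to explicit one-variable integrals whose rate is checked directly to exceed $1+\gamma^2$. Your route via Lemma~\ref{DSbdu} is more conceptual but, as you anticipated, is exactly where the care is needed.
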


This lemma allows to localise the first branching time and position, conditioned on $\{M_t \leq -\sqrt{2}\gamma t\}$. We note that it is similar to the proof of Lemma~\ref{badtime+badposition}, and postpone its proof to Appendix~\ref{lems4}. The next lemma gives a more detailed estimate of the time at which this branching event occurs.

\begin{lemma}\label{goodtimec}
For any $0 < a < b$ and $c<d$ fixed, for all $\phi \in \mathcal{C}_c^+(\R)$, we have
\begin{multline*}
  \lim_{t\to\infty}\frac{e^{(1+\gamma^2)t}}{t^{3\gamma/4}}U^\phi_2(-\sqrt{2}\gamma t, t, [a\sqrt{t}, b\sqrt{t}],[c,d])\\
 =\frac{1}{\sqrt{2\pi}}\int_{a}^b r^{3\gamma/2}e^{-2r^2}\dd r \int_{c}^d e^{-\sqrt{2}\gamma z}w_\phi(z)^2\dd z.
\end{multline*}
\end{lemma}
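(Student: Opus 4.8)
The plan is to follow the same route as the proof of Lemma~\ref{goodtime+position}, the essential difference being that the time integral now concentrates near the \emph{left endpoint} of the interval, on the scale $s\asymp\sqrt t$ rather than $s\asymp t$, and that the logarithmic correction in $m_s$ must be retained, since it is exactly what produces the polynomial factor $r^{3\gamma/2}$ and the prefactor $t^{3\gamma/4}$. Fixing $\phi\in\mathcal{C}_c^+(\R)$, I would start from
\[
  U^\phi_2(-\sqrt{2}\gamma t, t, [a\sqrt t, b\sqrt t],[c,d]) = \int_{a\sqrt t}^{b\sqrt t} \frac{e^{-(t-s)}}{\sqrt{2\pi(t-s)}}\,\dd s \int_c^d e^{-\frac{(m_s+z+\sqrt{2}\gamma t)^2}{2(t-s)}}\, u_\phi(m_s+z,s)^2\,\dd z.
\]
Since $a>0$, every $s$ in the range $[a\sqrt t,b\sqrt t]$ tends to $\infty$ with $t$, so the uniform convergence \eqref{uniformcvg} allows us to replace $u_\phi(m_s+z,s)^2$ by $w_\phi(z)^2$ up to a factor $1+o(1)$ that is uniform over $(s,z)\in[a\sqrt t,b\sqrt t]\times[c,d]$.

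Next I would substitute $s=r\sqrt t$, so that $\dd s=\sqrt t\,\dd r$ and $\tfrac{\sqrt t}{\sqrt{2\pi(t-s)}}\to\tfrac1{\sqrt{2\pi}}$ uniformly in $r\in[a,b]$, and then expand the exponent. Writing $m_s+z+\sqrt2\gamma t=\sqrt2(\gamma t+s)-\tfrac{3}{2\sqrt2}\log s+z$ and $\tfrac1{t-s}=\tfrac1t\bigl(1+\tfrac st+O((s/t)^2)\bigr)$, a direct computation gives, uniformly for $(r,z)\in[a,b]\times[c,d]$,
\[
  -(t-s)-\frac{(m_s+z+\sqrt2\gamma t)^2}{2(t-s)}=-(1+\gamma^2)t+(1-2\gamma-\gamma^2)r\sqrt t-(1+\gamma)^2 r^2+\frac{3\gamma}{4}\log t+\frac{3\gamma}{2}\log r-\sqrt2\gamma z+o(1),
\]
where the $o(1)$ absorbs the term $\tfrac{(\tfrac{3}{2\sqrt2}\log s-z)^2}{2(t-s)}=O(t^{-1}\log^2 t)$ together with the remainders of the geometric expansion. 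The algebraic identities $1-2\gamma-\gamma^2=0$ and $(1+\gamma)^2=2$, both consequences of $\gamma=\sqrt2-1$, kill the $\sqrt t$ term and turn the quadratic term into $-2r^2$, while the cross-term $\tfrac{\sqrt2(\gamma t+s)}{t-s}\bigl(\tfrac{3}{2\sqrt2}\log s-z\bigr)$ is precisely what contributes $\tfrac{3\gamma}{4}\log t+\tfrac{3\gamma}{2}\log r-\sqrt2\gamma z$. Exponentiating, after the change of variables the integrand of the $\dd r\,\dd z$-integral, multiplied by $t^{-3\gamma/4}e^{(1+\gamma^2)t}$, converges uniformly on the compact rectangle $[a,b]\times[c,d]$ to $\tfrac{1}{\sqrt{2\pi}}\,r^{3\gamma/2}e^{-2r^2}\,e^{-\sqrt2\gamma z}\,w_\phi(z)^2$.

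Passing to the limit under the integral — legitimate since the rescaled integrand converges uniformly on the compact rectangle — then yields
\[
  \lim_{t\to\infty}\frac{e^{(1+\gamma^2)t}}{t^{3\gamma/4}}U^\phi_2(-\sqrt{2}\gamma t, t, [a\sqrt t, b\sqrt t],[c,d])=\frac{1}{\sqrt{2\pi}}\int_a^b r^{3\gamma/2}e^{-2r^2}\,\dd r\int_c^d e^{-\sqrt2\gamma z}w_\phi(z)^2\,\dd z,
\]
as claimed. The only genuine obstacle here is bookkeeping: one has to push the expansion of $\tfrac{(m_s+z+\sqrt2\gamma t)^2}{2(t-s)}$ to the order at which the $\tfrac{3\gamma}{4}\log t$ term first appears, verify the cancellation of the $\sqrt t$ term, and check that all error estimates are uniform on $[a,b]\times[c,d]$; the probabilistic ingredient, namely \eqref{uniformcvg}, enters exactly as in the proof of Lemma~\ref{goodtime+position}.
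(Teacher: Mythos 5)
Your proof is correct and follows essentially the same route as the paper: replace $u_\phi$ by $w_\phi$ via \eqref{uniformcvg}, substitute $s=r\sqrt t$, and Taylor-expand the exponent using the algebraic identities $1-2\gamma-\gamma^2=0$ and $(1+\gamma)^2=2$ to extract $-2r^2$ and the $t^{3\gamma/4}r^{3\gamma/2}e^{-\sqrt2\gamma z}$ factor. The paper carries out the identical expansion (collapsed into the phrase ``by simple calculations'') before changing variables rather than after, but the computation is the same.
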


\begin{proof}
This proof is similar to the proofs of Lemmas \ref{goodtime+position} and \ref{smalltime}. By \eqref{uniformcvg}, we have
\begin{align*}
  U^\phi_2(-\sqrt{2}\gamma t, t, [a\sqrt{t}, b\sqrt{t}],[c,d])
  =&\int_{a\sqrt{t}}^{b\sqrt{t}}e^{-(t-s)}\dd s\int_{c}^d\frac{e^{-\frac{(z+m_s+\sqrt{2}\gamma t)^2}{2(t-s)}}}{\sqrt{2\pi(t-s)}}u_\phi(m_s+z,s)^2\dd z\\
  \sim &\int_{a\sqrt{t}}^{b\sqrt{t}}e^{-(t-s)}\dd s\int_{c}^d\frac{e^{-\frac{(z+m_s+\sqrt{2}\gamma t)^2}{2(t-s)}}}{\sqrt{2\pi(t-s)}}w_\phi(z)^2\dd z,
\end{align*}
as $t \to \infty$. Note that $1+\gamma^2=2\sqrt{2}\gamma$. Hence by simple calculations we obtain
\begin{multline*}
  U^\phi_2(-\sqrt{2}\gamma t, t, [a\sqrt{t}, b\sqrt{t}],[c,d])\\
  \sim e^{-(1+\gamma^2)t}\int_{a\sqrt{t}}^{b\sqrt{t}}s^{3\gamma/2}e^{-\frac{2s^2}{t}}\frac{\dd s}{\sqrt{2\pi t}}\int_{c}^d e^{-\sqrt{2}\gamma z}w_\phi^2(z)\dd z, \quad \text{as $t \to \infty$.}
\end{multline*}
With a change of variable $s=r\sqrt{t}$, the proof is now complete.
\end{proof}

\begin{proof}[Proof of \eqref{lowerprobc}]
Recall that it is enough to prove \eqref{1birthc}. For all $t > 0$ and $A, K > 0$, we have
\begin{multline*}
  U_2(-\sqrt{2}\gamma t, t) = \left(U_2(-\sqrt{2}\gamma t, t)-U_2(-\sqrt{2}\gamma t, t, [\sqrt{t}/A, A\sqrt{t}],[-K,K])\right)\\ + U_2(-\sqrt{2}\gamma t, t, [\sqrt{t}/A, A\sqrt{t}],[-K,K]).
\end{multline*}
Therefore, using Lemmas \ref{badtimecA} and \ref{goodtimec}, we obtain
\begin{multline*}
  \lim_{t \to \infty}  t^{-3\gamma/4}e^{(1+\gamma^2)t} U_2(-\sqrt{2}\gamma t, t)\\
  = o_A(1) + o_K(1) + \frac{1}{\sqrt{2\pi}}\int_{1/A}^A r^{3\gamma/2}e^{-2r^2}\dd r \int_{-K}^K e^{-\sqrt{2}\gamma z}w^2(z)\dd z,
\end{multline*}
which converges to $C^{(2)}$ as $A,K \to \infty$.
\end{proof}

We now complete the proof of Theorem \ref{criticalmax} by proving \eqref{jointcvgc} and \eqref{processcvgc}.
\begin{proof}[Proof of \eqref{jointcvgc} and \eqref{processcvgc}]
For any $x_1,x_3\in\R_+$ and $x_2\in\R$, applying the Markov property at time $\tau$, and using \eqref{eqn:oneObs}, we have
\begin{align*}
&\P\left(\tau \geq t-x_1\sqrt{t}, X_\emptyset(\tau)\geq (-\sqrt{2}\gamma t+m_{t-\tau})-x_2, M_t\leq -\sqrt{2}\gamma t-x_3\right)\\
=& \int_{t-x_1\sqrt{t}}^t e^{-r}\dd r\int_{(-\sqrt{2}\gamma t-m_{t-r})-x_2}^\infty\P(B_r \in\dd y)u^2(-\sqrt{2}\gamma t-x_3-y, t-r)\\
&\qquad\qquad\qquad\qquad\qquad\qquad\qquad\qquad\qquad\qquad\qquad\qquad\qquad + o_t(1)t^{3\gamma/4}e^{-(1+\gamma^2)t}\\
=&\int_0^{x_1\sqrt{t}}\dd s\int_{-\infty}^{x_2-x_3}\frac{e^{-\frac{(z+m_s+\sqrt{2}\gamma t+x_3)^2}{2(t-s)}}}{\sqrt{2 \pi (t-s)}}u^2(m_s+z,s)\dd z + o_t(1)t^{3\gamma/4}e^{-(1+\gamma^2)t},
\end{align*}
using the change of variables $s = t-r$ and $z = -\sqrt{2}\gamma t - x_3-y-m_s$. We now apply Lemma \ref{badtimecA} to obtain
\begin{align*}
  &\P\left(\tau \geq t-x_1\sqrt{t}, X_\emptyset(\tau)\geq (-\sqrt{2}\gamma t+m_{t-\tau})-x_2, M_t\leq -\sqrt{2}\gamma t-x_3\right)\\
  =&(o_{A,t}(1)+o_{K,t}(1)+o_t(1))t^{3\gamma/4}e^{-(1+\gamma^2)t}\\
  &\qquad \qquad +\int_{\sqrt{t}/A}^{x_1\sqrt{t}}\dd s\int_{-K}^{x_2-x_3}\frac{e^{-\frac{(z+m_s+\sqrt{2}\gamma t+x_3)^2}{2(t-s)}}}{\sqrt{2\pi (t-s)}}u^2(m_s+z,s)\dd z.
\end{align*}
Then Lemma \ref{goodtimec} gives
\begin{multline*}
  \P\left(\tau \geq t-x_1\sqrt{t}, X_\emptyset(\tau)\geq (-\sqrt{2}\gamma t+m_{t-\tau})-x_2, M_t\leq -\sqrt{2}\gamma t-x_3\right)\\
\sim \frac{1}{\sqrt{2\pi}}t^{3\gamma/4}e^{-(1+\gamma^2)t}\int_0^{x_1} r^{3\gamma/2}e^{-2r^2}\dd r\int_{-\infty}^{x_2-x_3}e^{-\sqrt{2}\gamma(z+x_3)}w^2(z)\dd z \text{ as } t \to \infty,
\end{multline*}
which, together with \eqref{lowerprobc} proves \eqref{jointcvgc}.

We now turn to the proof of \eqref{processcvgc}. For any $\phi\in \mathcal{C}_c^+(\R)$, using again the Markov property at time $\tau$, we have
\begin{align*}
&\E[e^{-\int\phi \dd \mathcal{E}_t(-\gamma)}; M_t\leq -\sqrt{2}\gamma t]\\
=& e^{-t} \E[e^{-\phi(B_t+\sqrt{2}\gamma t)}; B_t\leq -\sqrt{2}\gamma t]\\
&+\int_0^t e^{-r}\dd r\int_\R \P(B_r\in\dd y)\E\left[e^{-\sum_{u\in N(t-r)}\phi(X_u(t-r)+y+\sqrt{2}\gamma t)}; M_{t-r}\leq -\sqrt{2}\gamma t-y\right]^2.
\end{align*}
On the one hand, by \eqref{cvgI1},
\[
  e^{-t} \E[e^{-\phi(B_t+\sqrt{2}\gamma t)}; B_t\leq -\sqrt{2}\gamma t]\leq e^{-t}\P(B_t\leq -\sqrt{2}\gamma t)=o_t(1)t^{3\gamma/4}e^{-(1+\alpha^2)t}.
\]
On the other hand, using again Lemmas \ref{badtimecA} and \ref{goodtimec}, we obtain
\begin{align*}
  &\int_0^t e^{-r}\dd r\int_\R \P(B_r\in\dd y)\E[e^{-\sum_{u\in N(t-r)}\phi(X_u(t-r)+y+\sqrt{2}\gamma t)}; M_{t-r}\leq -\sqrt{2}\gamma t-y]^2 \\
  \sim &t^{3\gamma/4}e^{-(1+\gamma^2)t}\int_{0}^\infty r^{3\gamma/2}e^{-2r^2}\frac{dr}{\sqrt{2\pi }}\int_{\R} e^{-\sqrt{2}\gamma z}w_\phi^2(z)\dd z,
\end{align*}
as $t \to \infty$. It thus follows, using \eqref{lowerprobc}, that
\begin{align*}
 \lim_{t \to \infty} \E\left[e^{-\int\phi d\mathcal{E}_t(-\gamma)}\vert M_t\leq -\sqrt{2}\gamma t\right]=
\frac{1}{2C^{(1)}}\int_\R e^{-\sqrt{2}\gamma z}w_\phi^2(z)\dd z=\E\left[e^{-\int\phi d\mathcal{E}^-}\right],
\end{align*}
which, by \cite[Proposition 2.2]{Bo17} is enough to conclude \eqref{processcvgc}.
\end{proof}

\appendix

\section{Proof of Lemmas}
\label{lems}

We prove in this section some of the more technical lemmas, that are needed to complete the proofs.

\subsection{Proof of Lemma \ref{CHbdu}}
\label{lems1}

Recall that Lemma \ref{CHbdu} consists in the following non-asymptotic estimate :  for all $\delta > 0$, $\P(M_t\leq m_t-z)\leq c_\delta e^{-\sqrt{2}\gamma(1-\delta)z}$ for all $t,z \geq 1$.

\begin{proof}[Proof of Lemma \ref{CHbdu}]
We begin by bounding $\P(M_t \leq m_t - z)$ for $1 < z \leq t$. Denote by $n(t):=\#N(t)$ the total number of particles alive at time $t$. As every individual gives birth at exponential rate to two children, the process $(n(t), t \geq 0)$ is a standard Yule process. Hence $\P(n(t)=k)=e^{-t}(1-e^{-t})^{k-1}$ for any $k\in\N$. Let $\eta\in(0,\delta)$ small enough, such that $J:=\lfloor \frac{\sqrt{2}\gamma}{\eta}\rfloor\geq1$.  Observe that
\begin{align}\label{GHbd}
  \P(M_t\leq m_t-z)
  \leq &\P(n(Jz\eta )\leq z^3)+\P(n(Jz\eta)> z^3; M_t\leq m_t-z)\nonumber \\
  \leq & z^3e^{-Jz\eta}+\sum_{k=1}^J\P(n((k-1)z\eta)\leq z^3<n(kz\eta); M_t\leq m_t-z).
\end{align}

Let $(W_t)_{t\ge0}$ be a standard Brownian motion, independent of the branching Brownian motion. Using \cite[Lemma 5.1]{GH18}, for any $0<s<t$ and $x \in \R$, we have
\[
  \P(M_t \leq x) \leq \P\left(\max_{u\in N(s)}(W_{s}+M^u_{t-s}) \leq x\right),
\]
where $M^u_y:=\max_{v\in N(y+s), u\preccurlyeq v}X_v(y+s)-X_u(s)$, and $u\preccurlyeq v$ means that $v$ is a descendant of $u$. For any $1\leq k\leq J$, one has
\begin{align}
  \label{roughgoodtime}
  &\P(n((k-1)z\eta)\leq z^3<n(kz\eta); M_t\leq m_t-z)\nonumber\\
  \leq &\P(n((k-1)z\eta)\leq z^3<n(kz\eta); \max_{u\in N(kz\eta)}(W_{kz\eta}+M^u_{t-kz\eta})\leq m_t-z)\nonumber\\
  \leq &\P(n((k-1)z\eta)\leq z^3<n(kz\eta); W_{kz\eta}\leq m_t-m_{t-kz\eta}-z)+\P(M_{t-kz\eta}\leq m_{t-kz\eta})^{z^3}.
\end{align}

On the one hand, for $1\leq k\leq J$, by \eqref{basic},
\begin{align*}
  &\P(n((k-1)z\eta)\leq z^3<n(kz\eta); W_{kz\eta}\leq m_t-m_{t-kz\eta}-z)\\
  \leq&\P(n((k-1)z\eta)\leq z^3)\P(W_{kz\eta}\leq -(1-\sqrt{2}k\eta)z)\\
  \leq&z^3e^{-(k-1)z\eta}\frac{\sqrt{kz\eta}}{(1-\sqrt{2}k\eta)z}e^{-\frac{(1-\sqrt{2}k\eta)^2}{2k\eta}z}.
\end{align*}
As $(1-\sqrt{2}k\eta)\sqrt{z}\geq (1-2\gamma)\sqrt{z}>\sqrt{J\eta}$ for $z>100$, we deduce that
\begin{multline}
  \label{roughgoodtime1}
  \P(n((k-1)z\eta)\leq z^3<n(kz\eta); W_{kz\eta}
  \leq m_t-m_{t-kz\eta}-z)\\
  \quad\leq z^3e^{\eta z} \exp\left\{-\left[k\eta+\frac{(1-\sqrt{2}k\eta)^2}{2k\eta}\right]z\right\}.
\end{multline}

On the other hand, as $t-J\eta z\geq (1-\sqrt{2}\gamma)t\rightarrow\infty$ as $t\to\infty$, using that $z \leq t$ and the convergence \eqref{eqn:cvPointwise}, there exists $t_0\geq1$ and $c_0>0$ such that for all $t\geq t_0$ and $1\leq z\leq t$, one has $\displaystyle \P(M_{t-kz\eta} \leq m_{t - k z \eta}) \leq e^{-c_0}<1$. Then,
\begin{equation}\label{roughgoodtime2}
  \P(M_{t-kz\eta}\leq m_{t-kz\eta})^{z^3}\leq e^{-c_0 z^3}.
\end{equation}
As a result, using \eqref{roughgoodtime}, \eqref{roughgoodtime1} and \eqref{roughgoodtime2}, for $t>t_0$ and $100< z\leq t$, \eqref{GHbd} becomes that
\begin{align*}
  &\P(M_t\leq m_t-z)\\
  \leq & z^{3}e^{\eta z}e^{-\sqrt{2}\gamma z}+J\sup_{1\leq k\leq J}\left(z^3e^{\eta z} \exp\left(-\left[k\eta+\frac{(1-\sqrt{2}k\eta)^2}{2k\eta}\right]z\right)+e^{-c_0 z^3}\right)\\
  \leq & z^3e^{\eta z}e^{-\sqrt{2}\gamma z}+\frac{\sqrt{2}\gamma}{\eta}\sup_{0<s<\sqrt{2}\gamma}\left(z^3e^{\eta z} \exp\left(-\left[s+\frac{(1-\sqrt{2}s)^2}{2s}\right]z\right)+e^{-c_0 z^3}\right)\\
  =&z^3e^{\eta z}e^{-\sqrt{2}\gamma z}+\frac{\sqrt{2}\gamma}{\eta}\left(z^3e^{\eta z}e^{-\sqrt{2}\gamma z}+e^{-c_0 z^3}\right).
\end{align*}
For $\delta\in(0,1)$ small enough, we could take $\eta=\sqrt{2}\gamma\delta/2$, $t\geq t_0$ and $z\in[K_\delta, t]$ such that
\[
\P(M_t\leq m_t-z)\leq c_\delta e^{-\sqrt{2}\gamma (1-\delta) z}.
\]
Up to enlarging the constant $c_\delta$, this equation will hold for all $1 \leq z \leq t$.

We now bound $\P(M_t\leq m_t-z)$ with $z\geq t$. We apply \eqref{DSupbdu} and obtain that for $z\geq t\geq t_{\epsilon,\beta}$,
\begin{align*}
  \P(M_t\leq m_t-z)&\leq u\left(\sqrt{2}\left(1-\tfrac{z}{\sqrt{2}t}\right)t,t\right)\\
  & \leq
  \begin{cases}
    e^{-\sqrt{2}\gamma z+\epsilon t},&\textrm{ if } t\le z<2t;\\
    e^{-(1+(1-\frac{z}{\sqrt{2}t})^2)t+\epsilon t},&\textrm{ if } 2t\le z\leq \sqrt{2}(1+\beta)t;\\
    e^{-(1-\frac{z}{\sqrt{2}t})^2t},&\textrm{ if } z\geq \sqrt{2}(1+\beta)t.
  \end{cases}
\end{align*}
Note that $1+a^2\geq 2\gamma(1-a)$ for $a<1-\sqrt{2}$. So, $(1+(1-\frac{z}{\sqrt{2}t})^2)t\geq \sqrt{2}\gamma z$ if $z\ge 2t$. We also have $\left(1-\frac{z}{\sqrt{2}t}\right)^2t\geq \sqrt{2}\gamma z$ if $z\ge \sqrt{2}(1+\sqrt{2})t$. By taking $\beta=\sqrt{2}$ and $\epsilon=\sqrt{2}\gamma\delta$, we thus get that for $t\geq t_{\epsilon,\beta} $ and $z\ge t$,
\[
\P(M_t\leq m_t-z)\leq e^{-\sqrt{2}\gamma z+\epsilon t}\leq e^{-\sqrt{2}\gamma(1-\delta)z}.
\]
We hence conclude that for any $\delta\in(0,1)$, there exists $T_\delta=t_{\epsilon,\beta}\vee t_0$ and $K_\delta\ge1$ such that for any $t\geq t_\delta$ and $z\geq K_\delta$, \eqref{CHupbdu} holds. Thus, up to enlarging again constant $c_\delta$, the proof is now complete.
\end{proof}

\subsection{Proof of Lemma \ref{badtime+badposition}}
\label{lems2}

We assume here that $\alpha \in (-\gamma,1)$. The aim of this section is to prove that for all $\phi \in \mathcal{C}^+_c(\R)$, setting $I_{t,A} = [v_\alpha t - At^{1/2},v_\alpha t + A t^{1/2}]$, we have
\begin{equation}
  \label{eqn:data}
  \lim_{A,K \to \infty}
  \limsup_{t \to \infty} \frac{e^{2 \gamma(1-\alpha)t}}{t^{3 \gamma /2 }} \int_{(I_{t,A} \times [-K,K])^c}\!\!\!\!\!\!\!\!\dd s\dd z \frac{e^{-(t-s)-\frac{(m_s+z-\sqrt{2}\alpha t)^2}{2(t-s)}}}{\sqrt{2\pi (t-s)}}u_\phi(m_s+z,s)^2 = 0.
\end{equation}
As $\phi$ is non-negative, we observe that
\[
  u_\phi(z,t) = \E\left(e^{-\sum_{u \in N(t)} \phi(X_u(t)-z)};M_t \leq z\right) \leq \P(M_t \leq z) = u(z,t).
\]
It is enough to prove that \eqref{eqn:data} holds for $\phi \equiv 0$.

Therefore, the objective of the section can be restated as follows: conditioned on $\{M_t \leq \sqrt{2}\alpha t\}$, we show that the first branching time $\tau$ is with high probability located around $(1 - v_\alpha) t + O(\sqrt{t})$, and the position at which that particle branches satisfies $\sqrt{2}\alpha t - m_{t-\tau} + O(1)$ with high probability.

The idea of the proof is the following: we use \eqref{key} to rewrite $u$ as the sum of $U_1$ and $U_2$. By Lemma \ref{lemcvgI1}, $U_1$ add a negligible contribution to $u$, so that
\[
  u(\sqrt{2}\alpha t,t) \approx \int_{[0,t] \times \R} \dd s\dd z \frac{e^{-(t-s)-\frac{(m_s+z-\sqrt{2}\alpha t)^2}{2(t-s)}}}{\sqrt{2\pi (t-s)}}u(m_s+z,s)^2.
\]
Moreover, by Lemma \ref{goodtime+position}, a large contribution to $u$ is carried by the regions of the form $I_{A,t} \times [-K,K]$, with $A > 0$ and $K$ large enough. We now use a priori domination estimates for $u$ (e.g. Lemma \ref{DSbdu}) and methods similar to the proof of Laplace's method.

We decompose the proof of Lemma \ref{badtime+badposition} into three parts, by considering the contribution of various domains of $[0,t] \times \R$.

\subsubsection{Linear bounds on the first splitting time}

As a first step towards the proof of Lemma \ref{badtime+badposition}, we show that for all $\epsilon > 0$,
\[
\P\left(|\tau - (1-v_\alpha) t| > \epsilon t , M_t \leq \sqrt{2}\alpha t\right) \ll u(\sqrt{2}\alpha t , t).
\]
\begin{lemma}
\label{small2ndterm}
Let $\alpha \in (-\gamma,1)$. For all $\epsilon > 0$ small enough, we have
\begin{align}
  \limsup_{t \to \infty}& \frac{1}{t} \log U_2(\sqrt{2}\alpha t, t,[0,(v_\alpha - \epsilon) t]) < - 2 \gamma(1-\alpha), \label{eqn:ubBeginning} \\
  \limsup_{t \to \infty}& \frac{1}{t} \log U_2(\sqrt{2}\alpha t, t,[(v_\alpha + \epsilon) t,t]) < - 2 \gamma(1-\alpha). \label{eqn:ubEnding}
\end{align}
\end{lemma}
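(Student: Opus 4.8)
\emph{Strategy.} Since $0 \le u_\phi \le u$ pointwise, it suffices to treat $\phi \equiv 0$, i.e.\@ to bound $U_2$. Applying the branching property at the first branching time rather than at time $t$ (equivalently, performing the substitutions $s \mapsto t-s$ and $z \mapsto \sqrt 2\alpha t - y - m_{t-s}$ in the definition of $U_2(\sqrt2\alpha t,t,I)$) we may write
\[
  U_2(\sqrt 2\alpha t, t, I) = \int_{t-I} e^{-r}\,\dd r \int_\R \frac{e^{-y^2/(2r)}}{\sqrt{2\pi r}}\, u\big(\sqrt 2\alpha t - y,\, t - r\big)^2\, \dd y ,
\]
where $r$ plays the role of the branching time $\tau$ and $y$ of the branching position $X_\emptyset(\tau)$; thus $I = [0,(v_\alpha-\epsilon)t]$ corresponds to $\tau \in [(1-v_\alpha+\epsilon)t, t]$ and $I=[(v_\alpha+\epsilon)t,t]$ to $\tau \in [0,(1-v_\alpha-\epsilon)t]$. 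The change of variables $r = \lambda t$, $y = \sqrt 2 v t$ turns this into
\[
  U_2(\sqrt 2\alpha t, t, I) = \frac{t^{3/2}}{\sqrt\pi}\int \lambda^{-1/2}\, e^{-t(\lambda + v^2/\lambda)}\, u\big(\sqrt 2(\alpha - v)t,\ (1-\lambda)t\big)^2\, \dd\lambda\, \dd v .
\]
For $\lambda$ bounded away from $0$ and $1$, Lemma~\ref{DSbdu} (with $\beta$ large and $\epsilon' > 0$ arbitrary) gives $u(\sqrt 2(\alpha-v)t,(1-\lambda)t)^2 \le e^{2(\psi(a)+\epsilon')(1-\lambda)t}$ when $a := \tfrac{\alpha-v}{1-\lambda} < 1$, and $\le 1$ when $a \ge 1$. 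Hence the exponential rate of the integrand is controlled by
\[
  h_\alpha(\lambda,v) := -\lambda - \frac{v^2}{\lambda} + 2\,\psi\!\left(\frac{\alpha-v}{1-\lambda}\right)(1-\lambda) .
\]

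\emph{The variational estimate.} The crux is to show that, for the relevant ranges of $\lambda$, $\sup_{v\in\R} h_\alpha(\lambda,v)$ stays strictly below $-2\gamma(1-\alpha)$. Recall $g_\alpha(u) = (1-u) + \tfrac{(\alpha-u)^2}{1-u}$ from \eqref{galpha}: it is smooth, strictly convex, and $\min g_\alpha = g_\alpha(v_\alpha) = 2\gamma(1-\alpha)$. Optimising $v$ separately in the three regimes of $\psi$: on $\{a \ge 1\}$ one has $h_\alpha(\lambda,v) = -\lambda - v^2/\lambda$, maximised at the regime boundary $v = \alpha - 1 + \lambda$ (i.e.\@ $a = 1$), with value $-g_\alpha(1-\lambda)$; on $\{-\gamma \le a < 1\}$, $h_\alpha(\lambda,\cdot)$ is a concave quadratic with maximiser $v = -2\gamma\lambda$, which is admissible precisely when $\lambda < \lambda_0 := \tfrac{1-\alpha}{1+2\gamma}$, giving the value $(7-4\sqrt2)\lambda - 4\gamma(1-\alpha)$; and on $\{a < -\gamma\}$ the unconstrained maximiser lies outside the admissible interval, so this regime contributes no more than the $a=-\gamma$ junction value. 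One checks $7 - 4\sqrt 2 > 0$, $\lambda_0 < 1 - v_\alpha$, and that the two expressions agree at $\lambda_0$ with common value $-g_\alpha(1-\lambda_0) < -g_\alpha(v_\alpha) = -2\gamma(1-\alpha)$ (because $1-\lambda_0 > v_\alpha$). Consequently, for $I = [0,(v_\alpha-\epsilon)t]$, where $\lambda \ge 1 - v_\alpha + \epsilon > \lambda_0$, one gets $\sup_v h_\alpha(\lambda,v) \le -g_\alpha(1-\lambda) \le -g_\alpha(v_\alpha-\epsilon)$; and for $I = [(v_\alpha+\epsilon)t,t]$, where $\lambda \le 1-v_\alpha-\epsilon$, one gets $\sup_v h_\alpha(\lambda,v) \le -g_\alpha(v_\alpha+\epsilon) \vee \big(-g_\alpha(1-\lambda_0)\big)$. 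In both cases the bound is a constant $<-2\gamma(1-\alpha)$; choosing $\epsilon'$ small enough in Lemma~\ref{DSbdu} then yields \eqref{eqn:ubBeginning} and \eqref{eqn:ubEnding} for the part of the integral with $\lambda$ away from the endpoints.

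\emph{The degenerate endpoints.} It remains to control the slices where $\lambda \to 1$ (only for \eqref{eqn:ubBeginning}) or $\lambda \to 0$ (only for \eqref{eqn:ubEnding}). If $1 - \lambda \le T/t$ for a large fixed $T$, i.e.\@ the post-branching time $s = (1-\lambda)t \le T$, the trivial bound $u(x,s) \le \P(B_s \le x)$ together with the Gaussian tail of $y$ shows the slice contributes at most $e^{-t+o(t)}$ when $\alpha \ge 0$ and at most $e^{-(1+\alpha^2)t + o(t)}$ when $\alpha < 0$ — both $\ll e^{-2\gamma(1-\alpha)t}$, since $2\gamma(1-\alpha) \le 2\gamma < 1$ for $\alpha\ge0$ and $1 + \alpha^2 > 2\gamma(1-\alpha)$ on $(-\gamma,1)$. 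If $\lambda \le T/t$, i.e.\@ the branching time $r \le T$, then $y = B_r$ is tight and $u(\sqrt 2\alpha t - y, t-r)^2 \le e^{2(\psi(\alpha)+\epsilon'+o(1))(t-r)} = e^{(-4\gamma(1-\alpha)+o(1))t}$ for $t$ large, again $\ll e^{-2\gamma(1-\alpha)t}$; as before the contribution of $|y| > \sqrt t$ is killed by the Gaussian density $e^{-y^2/(2r)}$.

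\emph{Main difficulty.} The delicate point is the variational estimate: for each fixed $\lambda$ one must locate the optimal branching position $v$ in the piecewise-defined rate $\psi$, track which regime it falls in as $\lambda$ varies (hence the threshold $\lambda_0$), and verify the uniform strict gap to $-2\gamma(1-\alpha)$ once $\lambda$ is kept away from $1 - v_\alpha$. Note that the optimal configuration has the two post-branching sub-trees travelling at the critical speed $\sqrt 2$ (the boundary case $a = 1$); this is precisely why the travelling wave $w$, rather than any other profile, enters the constant $C^{(1)}$ of Theorem~\ref{highmax}.
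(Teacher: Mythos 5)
Your strategy mirrors the paper's own proof of Lemma~\ref{small2ndterm}, which splits the range of the branching time into a degenerate slice near $\tau=t$ handled by a direct Gaussian estimate (Lemma~\ref{lem:upbdsmalltime}) and a bulk range handled by a Laplace argument on the exponent coming from Lemma~\ref{DSbdu} (Lemma~\ref{lem:upbdmediatetime}). The substantive difference is the parametrization: the paper scales the position by the post-branching time ($r=s/t$, $h=z/(\sqrt 2 s)$), while you parametrize by the branching time $\lambda=\tau/t$ and scale by the total time. Both lead to the same variational problem, but your version makes it more explicit, and your identification of the threshold $\lambda_0=(1-\alpha)/(1+2\gamma)$ -- below which the optimal $v$ is $-2\gamma\lambda$, above which it sits on the boundary $a=1$ -- cleanly replaces the paper's more abstract compactness-plus-uniqueness argument. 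The algebra checks out: $4\gamma+4\gamma^2-1=7-4\sqrt 2>0$, $\lambda_0<1-v_\alpha$, the two branches of $\sup_v h_\alpha$ match at $\lambda_0$ with common value $-g_\alpha(1-\lambda_0)$, and the $\{a<-\gamma\}$ regime is dominated by the junction.

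Two small corrections. First, on $\{a\ge1\}$ the maximum of $-\lambda-v^2/\lambda$ is at the boundary $v=\alpha-1+\lambda$ only when this value is $\le0$, i.e.\@ $\lambda\le1-\alpha$; for $\alpha>0$ and $\lambda\in(1-\alpha,1]$ the region contains $v=0$, and the supremum equals $-\lambda$, not $-g_\alpha(1-\lambda)$. This does not break anything -- $-\lambda<-(1-\alpha)<-2\gamma(1-\alpha)$ since $2\gamma<1$ -- but the case split should be recorded, as it is relevant for \eqref{eqn:ubBeginning} when $\alpha>0$. Second, your separate endpoint treatment of $\lambda\le T/t$ for \eqref{eqn:ubEnding} is superfluous: for $\lambda\le1-v_\alpha-\epsilon$ the post-branching time $(1-\lambda)t\ge(v_\alpha+\epsilon)t\to\infty$, so Lemma~\ref{DSbdu} applies uniformly all the way down to $\lambda=0$, the $v$-integral contributes a $\sqrt\lambda$ that cancels the $\lambda^{-1/2}$ prefactor, and your rate formula converges to $-4\gamma(1-\alpha)<-2\gamma(1-\alpha)$ as $\lambda\to0$; indeed the paper carries \eqref{upbdlargetime+} through the full range $r\in[v_\alpha+\epsilon,1]$ with no separate endpoint argument. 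Conversely, the endpoint $\lambda\to1$ in \eqref{eqn:ubBeginning} genuinely does need a separate treatment since $(1-\lambda)t$ need not be large; there you should ensure the two regimes overlap by taking $T\ge t_{\epsilon,\beta}$ so that Lemma~\ref{DSbdu} is valid down to $1-\lambda=T/t$.
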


To prove this result, we begin by bounding the probability that a split occurs at the very end of the process.
\begin{lemma}
\label{lem:upbdsmalltime}
Let $\alpha \in (-\gamma,1)$. There exists $\epsilon_0 > 0$ such that for all $\epsilon \in (0,\epsilon_0)$,
\begin{equation}
  \label{upbdsmalltime}
  \limsup_{t \to \infty} \frac{1}{t} \log U_2(\sqrt{2}\alpha t, t,[0,\epsilon t]) < -2\gamma(1-\alpha).
\end{equation}
\end{lemma}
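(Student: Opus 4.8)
The plan is to bound $U_2(\sqrt{2}\alpha t,t,[0,\epsilon t])$ by a crude argument in which, after the (unique) branching event near the end of the process, I keep only one lineage from each of the two subtrees. Starting from the definition of $U_2(\sqrt 2\alpha t, t, I, B)$ and the change of variables $w=\sqrt{2}\alpha t-(m_s+z)$, one rewrites the quantity as
\[
U_2(\sqrt{2}\alpha t,t,[0,\epsilon t])=\int_0^{\epsilon t} e^{-(t-s)}\dd s\int_\R \P(B_{t-s}\in\dd w)\, u(\sqrt{2}\alpha t-w,s)^2,
\]
which is just \eqref{key} with the first branching time forced into $[t-\epsilon t,t]$. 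Now, as already recorded in Section~\ref{facts}, any fixed lineage of the branching Brownian motion is a Brownian motion, so $u(y,s)=\P(M_s\le y)\le\P(B_s\le y)$; hence $u(\sqrt{2}\alpha t-w,s)^2\le\P(B'_s\le\sqrt{2}\alpha t-w)\P(B''_s\le\sqrt{2}\alpha t-w)$ for independent copies $B',B''$ of a Brownian motion, independent of $B_{t-s}$. Integrating out $w$ gives
\[
\int_\R \P(B_{t-s}\in\dd w)\, u(\sqrt{2}\alpha t-w,s)^2\le \P\bigl(B_{t-s}+B'_s\le\sqrt{2}\alpha t,\ B_{t-s}+B''_s\le\sqrt{2}\alpha t\bigr)\le \P(B_t\le\sqrt{2}\alpha t),
\]
using $B_{t-s}+B'_s\egaldistr B_t$. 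Since $\int_0^{\epsilon t}e^{-(t-s)}\dd s\le e^{-(1-\epsilon)t}$, this produces the clean bound $U_2(\sqrt{2}\alpha t,t,[0,\epsilon t])\le e^{-(1-\epsilon)t}\,\P(B_t\le\sqrt{2}\alpha t)$.

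It then remains to insert the Gaussian tail and check the arithmetic. By \eqref{basic}, $\P(B_t\le\sqrt{2}\alpha t)\le1$ if $\alpha\ge0$, whereas $\P(B_t\le\sqrt{2}\alpha t)=\P(B_t\ge\sqrt{2}|\alpha|t)\le e^{-\alpha^2 t}$ for $t$ large if $\alpha<0$. Writing $\kappa:=\alpha^2\ind{\alpha<0}$, one gets $\limsup_{t\to\infty}\tfrac1t\log U_2(\sqrt{2}\alpha t,t,[0,\epsilon t])\le-(1+\kappa)+\epsilon$. The decisive point is that $1+\kappa>2\gamma(1-\alpha)$ for every $\alpha\in(-\gamma,1)$: when $\alpha\ge0$ this is $2\gamma(1-\alpha)\le2\gamma<1$, and when $\alpha<0$ it is the identity $1+\alpha^2-2\gamma(1-\alpha)=(\alpha+\gamma)^2>0$ (here $\alpha\neq-\gamma$ is used). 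Therefore I would set $\epsilon_0:=1+\kappa-2\gamma(1-\alpha)>0$; for every $\epsilon\in(0,\epsilon_0)$ the displayed bound yields $\limsup_{t\to\infty}\tfrac1t\log U_2(\sqrt{2}\alpha t,t,[0,\epsilon t])\le-(1+\kappa)+\epsilon<-2\gamma(1-\alpha)$, which is exactly the assertion of Lemma~\ref{lem:upbdsmalltime}.

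There is essentially no serious obstacle: the content of the lemma is that branching only at the very end of the process is wasteful, and the estimate above quantifies this by throwing away all but one particle in each subtree. The only mild care needed is the split on the sign of $\alpha$ in the Gaussian tail, combined with the algebraic identity $1+\alpha^2-2\gamma(1-\alpha)=(\alpha+\gamma)^2$, which is precisely what forces the inequality to work uniformly up to, but not including, $\alpha=-\gamma$ — consistently with the lemma being stated only for $\alpha\in(-\gamma,1)$, and with the fact that at the critical value the first branching genuinely happens at time $t-O(\sqrt t)$ rather than at a linear time.
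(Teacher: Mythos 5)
Your proof is correct, and it follows a genuinely different and more economical route than the paper's. The paper treats the two regimes $\alpha>-\gamma/2$ and $\alpha\le -\gamma/2$ separately (in the first, $\P(\tau\geq(1-\epsilon)t)=e^{-(1-\epsilon)t}$ already suffices; in the second, it further decomposes on the position $X_\emptyset(\tau)$ relative to $\sqrt{2}(\alpha+2\epsilon)t$ and invokes Lemma~\ref{DSbdu}, which itself rests on the Derrida--Shi exponential rate from \cite{DS16}). You sidestep both the case split and the appeal to \cite{DS16} by the single observation $u(y,s)\le\P(B_s\le y)$: conditioning on the position $w=X_\emptyset(\tau)$ at the branching time and replacing each offspring tree by a single Brownian lineage collapses the $w$-integral into $\P(B_t\le\sqrt{2}\alpha t)$, giving
\[
U_2(\sqrt{2}\alpha t,t,[0,\epsilon t])\le e^{-(1-\epsilon)t}\,\P(B_t\le\sqrt{2}\alpha t),
\]
after which only \eqref{basic} and the identity $1+\alpha^2-2\gamma(1-\alpha)=(\alpha+\gamma)^2$ are needed. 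The arithmetic checks out: $1-2\gamma=\gamma^2$ since $\gamma^2+2\gamma-1=0$ for $\gamma=\sqrt{2}-1$, so $1+\kappa-2\gamma(1-\alpha)$ equals $1-2\gamma(1-\alpha)\ge 1-2\gamma>0$ for $\alpha\in[0,1)$ and equals $(\alpha+\gamma)^2>0$ for $\alpha\in(-\gamma,0)$, which correctly pins down the necessity of $\alpha>-\gamma$. Your approach is both shorter and self-contained relative to the Gaussian tail estimate; the trade-off is that the paper's version, by also controlling $X_\emptyset(\tau)$, is structured to be reused in the companion Lemma~\ref{lem:upbdmediatetime}, whereas your argument is tailored to this lemma alone.
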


\begin{proof}
Equation \eqref{upbdsmalltime} can be rewritten as
\[
  \P(0 \leq t - \tau \leq \epsilon t, M_t \leq \sqrt{2}\alpha t) \ll t^{3\gamma/2} e^{-2 \gamma(1-\alpha)t}.
\]
First note that
$\displaystyle \P(0 \leq t - \tau \leq \epsilon t, M_t \leq \sqrt{2}\alpha t) \leq \P(\tau \geq (1- \epsilon) t) =e^{-(1-\epsilon)t}$. Thus \eqref{upbdsmalltime} holds for all $\alpha$ such that $2 \gamma (1-\alpha) < 1$, i.e. $\alpha > -\gamma/2$, for all $\epsilon > 0$ small enough.

We now assume that $\alpha \leq -\gamma/2 < 0$, and decompose the above probability as
\begin{multline}
  \label{eqn:ppov}
  \P(0 \leq t - \tau \leq \epsilon t, M_t \leq \sqrt{2}\alpha t) \leq\\
   \P(0 \leq t - \tau \leq \epsilon t, X_\emptyset(\tau) \leq \sqrt{2}(\alpha + 2 \epsilon)t, M_t \leq \sqrt{2}\alpha t)\\
    + \P(0 \leq t - \tau \leq \epsilon t, X_\emptyset(\tau) \geq \sqrt{2}(\alpha + 2\epsilon) t, M_t \leq \sqrt{2}\alpha t),
\end{multline}
and bound these two quantities separately.

First note that
\begin{multline*}
  \P(0 \leq t - \tau \leq \epsilon t, X_\emptyset(\tau) \leq \sqrt{2}(\alpha + 2 \epsilon)t, M_t \leq \sqrt{2}\alpha t)\\
  \leq \P(0 \leq t - \tau \leq \epsilon t, X_\emptyset(\tau) \leq \sqrt{2}(\alpha + 2\epsilon)t)\qquad \qquad \qquad\\
  \leq \int_{(1-\epsilon)t}^t e^{-s} \P(B_s \leq \sqrt{2}(\alpha + 2 \epsilon)t) \dd s \leq C t^{-1/2}  e^{-(1-\epsilon)t} e^{-\frac{(\alpha + 2 \epsilon)^2}{2(1-\epsilon)} t},
\end{multline*}
using \eqref{basic}. As $1 + \frac{\alpha^2}{2} > 2 \gamma (1-\alpha)$ for all $\alpha \in (-\gamma,-\gamma/2]$, we deduce that for all $\epsilon > 0$ small enough, there exists $\delta > 0$ such that
\begin{equation}
  \label{eqn:part1}
  \P(0 \leq t - \tau \leq \epsilon t, X_\emptyset(\tau) \leq \sqrt{2}(\alpha  + 2\epsilon)t) \leq C e^{-2 \gamma(1-\alpha)t - \delta t}.
\end{equation}
We now turn to bounding the second probability in \eqref{eqn:ppov}

Using the Markov property at time $\tau$, we bound it as
\begin{multline*}
  \P(0 \leq t - \tau \leq \epsilon t, X_\emptyset(\tau) \geq \sqrt{2}(\alpha + 2 \epsilon) t, M_t \leq \sqrt{2}\alpha t)\\
  \leq \int_{(1-\epsilon)t}^t e^{-s} \E\left(u(t-s,\sqrt{2}\alpha t- B_s)^2 \ind{B_s \geq \sqrt{2}(\alpha +2\epsilon)t}\right)  \dd s.
\end{multline*}
By Lemma \ref{DSbdu}, for all $s < \epsilon t$ and $y \leq - 2\epsilon t$, we have $u(s,y) \leq e^{-y^2/2s}$, yielding
\begin{multline*}
  \P(0 \leq t - \tau \leq \epsilon t, X_\emptyset(\tau) \geq \sqrt{2}(\alpha + 2 \epsilon) t, M_t \leq \sqrt{2}\alpha t)\\
  \leq e^{-(1 - \epsilon) t} \int_{0}^{\epsilon t} \int_{\sqrt{2}(\alpha + 2\epsilon)t}^\infty  e^{-\frac{(\sqrt{2}\alpha t - y)^2}{s}} e^{-\frac{y^2}{2(t-s)}} \dd y \dd s.
\end{multline*}
Using that
\[
  -\frac{y^2}{2(t-s)}-\frac{(y-\sqrt{2}\alpha t)^2}{s}=-\frac{2\alpha^2t^2}{2t-s}-\frac{(y-\sqrt{2}\alpha \frac{2(t-s)}{2t-s})^2}{\frac{2(t-s)s}{2t-s}},
\]
we obtain
\begin{multline*}
  \P(0 \leq t - \tau \leq \epsilon t, X_\emptyset(\tau) \geq \sqrt{2}(\alpha + 2 \epsilon) t, M_t \leq \sqrt{2}\alpha t)\\
  \leq e^{-(1+\alpha^2 - \epsilon) t} \int_{0}^{\epsilon t} \int_{\sqrt{2}(\alpha + 2\epsilon - \frac{2(t-s)}{2t-s})t}^\infty  e^{- \frac{z^2}{\frac{2(t-s)s}{2t-s}}} \dd z \dd s
  \leq \frac{\sqrt{2\pi}\epsilon^{3/2}}{\sqrt{1-\epsilon/2}} t^{3/2} e^{-(1+\alpha^2 - \epsilon) t}.
\end{multline*}
Therefore, as $1 +\alpha^2 > 2 \gamma (1 -\alpha)$ for all $\alpha \in (-\gamma,1)$, we conclude that for all $\epsilon > 0$ small enough, there exists $\delta >0$ such that
\begin{equation}
  \label{eqn:part2}
  \P(0 \leq t - \tau \leq \epsilon t, X_\emptyset(\tau) \geq \sqrt{2}(\alpha + 2 \epsilon) t, M_t \leq \sqrt{2}\alpha t) \leq C e^{-2 \gamma(1-\alpha)t - \delta t}.
\end{equation}

In view of \eqref{eqn:ppov}, equations \eqref{eqn:part1} and \eqref{eqn:part2} show that there exists $\epsilon_0$ so that for all $0 < \epsilon < \epsilon_0$, \eqref{upbdsmalltime} holds.
\end{proof}

We now bound the probability that the first splitting time in the branching Brownian motion occurs after time at a distance at least $\epsilon t$ from the expected time $(1 - v_\alpha) t$.
\begin{lemma}
\label{lem:upbdmediatetime}
Let $\alpha \in (-\gamma,1)$. There exists $\epsilon_1 > 0$ such that for all $\epsilon \in (0,\epsilon_1)$,
\begin{align}
  \limsup_{t \to \infty} \frac{1}{t} \log U_2(\sqrt{2}\alpha t, t,[\epsilon t, (v_\alpha-\epsilon)t]) < -2 \gamma (1-\alpha), \label{upbdmediatime}\\
  \limsup_{t \to \infty} \frac{1}{t} \log U_2(\sqrt{2}\alpha t, t,[(v_\alpha+\epsilon)t,t])< -2\gamma (1-\alpha). \label{upbdlargetime+}
\end{align}
\end{lemma}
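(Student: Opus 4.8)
The plan is to prove \eqref{upbdmediatime} and \eqref{upbdlargetime+} at once, since both amount to the estimate $U_2(\sqrt2\alpha t, t, I) \leq e^{-(2\gamma(1-\alpha)+\delta)t}$ for large $t$ and some $\delta = \delta(\epsilon) > 0$, where $I\subset[\epsilon t,t]$ is an interval whose rescaling $\tfrac1tI$ has closure a compact set $J\subset(0,1]$ with $v_\alpha\notin J$ — only this feature of $I$ will be used. I would start from
\[
  U_2(\sqrt2\alpha t, t, I) = \int_I e^{-(t-s)}\dd s\int_\R \frac{e^{-(\sqrt2\alpha t - m_s - z)^2/2(t-s)}}{\sqrt{2\pi(t-s)}}\,u(m_s+z,s)^2\,\dd z,
\]
and bound the last factor by the a priori estimates available: Lemma~\ref{DSbdu} in the form \eqref{DS17M}, which on the macroscopic scale gives $u(\sqrt2 a r,r)^2\le e^{2r\psi(a)+o(r)}$ uniformly in $a$ as $r\to\infty$; the trivial bound $u\le1$; and the uniform convergence \eqref{uniformcvg} near the front $z=O(1)$. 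Since $s\in I$ is of order $t$, the logarithmic corrections in $m_s$ are $O(\log t)$ and irrelevant for exponential rates. After the change of variables $s=ut$ and $m_s+z=\sqrt2 bt$, the logarithm of the integrand equals $-t\,G(u,b)+o(t)$ up to polynomial factors, with
\[
  G(u,b):=(1-u)+\frac{(\alpha-b)^2}{1-u}-2u\,\psi(b/u).
\]
A preliminary truncation handles $|b|$ large (for $b$ outside a fixed compact set the Gaussian factor alone already gives rate exceeding $2\gamma(1-\alpha)$ plus a term growing in $|b|$), after which Laplace's method — equivalently, a finite splitting of the $(u,b)$-domain into small cells with explicit exponential bounds, in the spirit of the proof of Lemma~\ref{lem:upbdsmalltime} — yields $\limsup_{t\to\infty}\tfrac1t\log U_2(\sqrt2\alpha t, t, I)\le -\inf_{u\in J,\,b\in\R}G(u,b)$.

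The heart of the matter is then the deterministic fact $\inf_{u\in J,\,b\in\R}G(u,b)>2\gamma(1-\alpha)$. Since $\psi$ is piecewise defined, I would minimise $G(u,\cdot)$ over each of the three regimes $b/u\ge1$, $-\gamma\le b/u<1$ and $b/u<-\gamma$, and then over $u\in J$. The guiding facts are: on the slice $b=u$ one has $\psi(1)=0$ and $G(u,u)=g_\alpha(u)$, the strictly convex function of \eqref{galpha} whose unique minimum $2\gamma(1-\alpha)$ is attained only at $u=v_\alpha$; on $\{b/u\ge1\}$, $\psi\equiv0$ and $G(u,b)\ge g_\alpha(u)\wedge(1-u)$, both of which exceed $2\gamma(1-\alpha)$ on the compact $J\not\ni v_\alpha$; on $\{-\gamma\le b/u<1\}$ the positive term $-2u\psi(b/u)>0$ forces the minimum in $b$ to be attained on the boundary $b=u$ over precisely the range of $u$ relevant here (the interior critical point $b=\alpha+2\gamma(1-u)$ lies in $\{b\ge u\}$ as long as $u\le\frac{\alpha+2\gamma}{1+2\gamma}$, and $\frac{\alpha+2\gamma}{1+2\gamma}>v_\alpha$), so one recovers $g_\alpha(u)$, resp. an explicit strictly larger value for $u$ close to $1$; and on $\{b/u<-\gamma\}$ one finds $G(u,b)\ge(1-u)+\tfrac{(\alpha+\gamma u)^2}{1-u}+4\sqrt2\,\gamma u$, which remains above $2\gamma(1-\alpha)$ so long as $\alpha>-\gamma$. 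Combining these and using compactness of $J$ gives $\inf_{u\in J,\,b}G(u,b)\ge 2\gamma(1-\alpha)+\delta$ for some $\delta(\epsilon)>0$, as required.

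The hard part, as this outline makes clear, is the rate-function analysis rather than the probabilistic machinery: one must check that the three competing strategies — branch and have the offspring reach exactly the critical front from a suitable branch position, branch and have the offspring overshoot, or essentially not branch at all — each have rate strictly above $2\gamma(1-\alpha)$, uniformly in $J$, and keeping track of which piece of $\psi$ is active is where a corner is easily missed. The sharpest regime is $\{b/u<-\gamma\}$ (the ``no early branching'' strategy), whose rate meets $2\gamma(1-\alpha)$ in the limit $\alpha\downarrow-\gamma$; this matches the second-order transition in Figure~\ref{fig:rateFunction} and is exactly why the gap $\delta(\epsilon)$ must degenerate as $\alpha\to-\gamma$. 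A secondary technical point is to make the Laplace estimate uniform in $u\in J$ while gluing the macroscopic bound of Lemma~\ref{DSbdu} to the near-front control of \eqref{uniformcvg}; this is routine but requires a little care about the $\log s$ terms in $m_s$.
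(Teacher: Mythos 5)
Your proposal takes essentially the same route as the paper's. Both change variables to put the integral in Laplace form (your $(u,b)$ with $u=s/t$, $b$ the rescaled position is the paper's $(r,h)$ with $h=b/u$ the slope), both invoke Derrida--Shi's a priori bound (Lemma~\ref{DSbdu}) to reduce to a deterministic rate-function minimisation over a compact set avoiding $v_\alpha$, and both split that minimisation according to the three pieces of $\psi$ together with a preliminary truncation of the large-slope region where the Gaussian factor dominates. Your explicit rate function $G(u,b)=(1-u)+\tfrac{(\alpha-b)^2}{1-u}-2u\psi(b/u)$ is actually a little more faithful than what appears in the paper: a direct application of \eqref{DSupbdu} at time $ut$ produces the extra factor $u$ in front of $\psi(b/u)$, which the paper's expression \eqref{eqn:theAim} (written with the auxiliary $\bar\Psi_\beta$ and no $r$ weight) glosses over; the final conclusion is unaffected since $2u\psi(b/u)\le 0$, but your bookkeeping is the cleaner one. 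Your verifications --- that the interior critical slope $b^*=\alpha+2\gamma(1-u)$ lies in $\{b\ge u\}$ precisely for $u\le \tfrac{\alpha+2\gamma}{1+2\gamma}$, that $\tfrac{\alpha+2\gamma}{1+2\gamma}>v_\alpha$, that the boundary minimum $b=-\gamma u$ is the active one in the third regime, and that the resulting gap degenerates as $(\alpha+\gamma)^2$ when $\alpha\downarrow-\gamma$ --- are all correct and in fact make explicit some points the paper states only in passing.

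One small imprecision worth fixing: in the regime $b/u\ge1$ you claim that both $g_\alpha(u)$ and $1-u$ exceed $2\gamma(1-\alpha)$ on all of $J$. As stated this is false; $1-u<2\gamma(1-\alpha)$ whenever $u>1-2\gamma(1-\alpha)$, which certainly happens for $u$ near $1$ when $\alpha$ is near $-\gamma$. What saves the argument is that the lower bound $G\ge 1-u$ arises only when the unconstrained minimiser $b=\alpha$ is feasible, i.e.\ when $u\le\alpha$, and there $1-u\ge 1-\alpha>2\gamma(1-\alpha)$ because $2\gamma<1$. (For $\alpha\le 0$ that sub-case is vacuous and the bound is $g_\alpha(u)$ throughout.) With that caveat the case analysis is complete.
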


\begin{proof}
Let $a < b$ such that $[a,b] \subset (0,v_\alpha) \cup (v_\alpha,1]$. By definition of $U_2$, we have
\begin{align*}
  U_2(\sqrt{2}\alpha t, t,[at ,bt])
  \leq & \int_{a t}^{b t} \int_\R\frac{\dd z}{\sqrt{2 \pi t}} e^{-(t-s) - \frac{(z - \sqrt{2}\alpha t)^2}{2(t-s)}} u(z,s)^2 \dd z  \dd s\\
  \leq & \int_{a}^{b} \int_\R e^{-t(1-r) - t\frac{(\sqrt{2}h r - \sqrt{2}\alpha )^2}{2(1-r)}} u(\sqrt{2}h tr , tr)^2 \sqrt{2}t^{3/2} r \dd r \frac{\dd h}{\sqrt{2\pi}},
\end{align*}
by change of variables $r= s/t$ and $h = z/\sqrt{2}s$. We then use Lemma \ref{DSbdu} to bound $u(\sqrt{2}h tr, r)$ uniformly in $(h,r)$ for $t$ large enough. For all $\delta > 0$ and $\beta \geq 1$, for all $t$ large enough we have
\begin{equation}
  \label{eqn:theAim}
  U_2(\sqrt{2}\alpha t, t,[at,bt]) \leq \frac{t^{3/2}}{\sqrt{\pi}} \int_{a}^{b} \int_\R e^{- t\left( 1 - r + \frac{(hr-\alpha)^2}{1-r} + 2 \bar{\Psi}_\beta(h) - \delta\right)}  \dd h \dd r,
\end{equation}
where we set
\begin{equation}
  \label{eqn:definePsiBetaBar}
  \bar{\Psi}_\beta(a) :=
  \begin{cases}
    0, & \text{ if } a \geq 1;\\
    \sqrt{2} \gamma (1 - a), & \text{ if } - \gamma \leq a < 1;\\
    (1 + a^2), & \text{ if } -\beta \leq a < -\gamma;\\
    a^2, &\text{ if } a < -\beta.
  \end{cases}
\end{equation}
To complete this proof, it is therefore enough to prove that the right-hand side of \eqref{eqn:theAim} decays exponentially fast, at a rate larger than $2 \gamma(1-\alpha)$. To do so, we decompose the integral over $\R$ into thee subsets : $(-\infty,-\beta)$, $[-\beta,1]$ and $(1,\infty)$.

We first observe that on the interval $[1,\infty)$, by change of variable $v = hr-\alpha$, we have
\begin{multline*}
  \int_{a}^{b} \int_1^\infty e^{- t\left( 1 - r + \frac{(hr-\alpha)^2}{1-r} - \delta\right)}  \dd h\dd r
  \leq b \int_a^b \int_{r-\alpha}^\infty e^{-t(1-r + \frac{v^2}{1-r} - \delta)} \dd v \dd r\\
  \leq C t^{-1/2} \left(\int_a^\alpha e^{-t(1-r)} \dd r + \int_\alpha^b e^{-t(1-r + \frac{(r-\alpha)^2}{1-r})}\right) \dd r,
\end{multline*}
using \eqref{basic} to bound the integrals over $v$. Hence, one straightforwardly obtains that
\begin{multline}
  \label{eqn:partie1}
  \limsup_{t \to \infty} \frac{1}{t} \log \int_{a}^{b} \int_1^\infty e^{- t\left( 1 - r + \frac{(hr-\alpha)^2}{1-r} - \delta\right)}  \dd h\dd r \\
  \leq \delta -\min(1-\alpha,g_\alpha(b)) < -2 \gamma(1-\alpha),
\end{multline}
for $\delta > 0$ small enough, where $g_\alpha$ is the function defined in \eqref{galpha}, which attains its maximum at $v_\alpha$ with value $2 \gamma(1-\alpha)$.

Similarly, as $[a,b] \times [-\beta,1]$ is compact, we also have
\begin{multline*}
  \limsup_{t \to \infty} \frac{1}{t} \log \int_{a}^{b} \int_{-\gamma}^1 e^{- t\left( 1 - r + \frac{(hr-\alpha)^2}{1-r} + 2\bar{\Psi}_\beta(h) - \delta\right)}  \dd h\dd r \\
  \leq \delta - \inf_{\substack{r \in [a,b]\\ h \in [-\beta,1]}}  1 - r + \frac{(hr-\alpha)^2}{1-r} + 2\bar{\Psi}_\beta(h)
  \leq \delta - \inf_{\substack{r \in [a,b]\\ h \in [-\beta,1]}}  1 - r + \frac{(hr-\alpha)^2}{1-r} + 2\sqrt{2}\gamma (1-h).
\end{multline*}
The function $(h,r) \in (-\infty,1] \times [\epsilon,1] \mapsto 1 - r + \frac{(hr-\alpha)^2}{1-r} + 2\sqrt{2}\gamma (1-h)$ attaining its unique minimum at $(v_\alpha,1)$, we conclude again that, choosing $\delta > 0$ small enough, we have
\begin{equation}
    \label{eqn:partie2}
     \limsup_{t \to \infty} \frac{1}{t} \log \int_{a}^{b} \int_{-\gamma}^1 e^{- t\left( 1 - r + \frac{(hr-\alpha)^2}{1-r} + 2\bar{\Psi}_\beta(h) - \delta\right)}  \dd h\dd r  < -2 \gamma (1 -\alpha).
\end{equation}

Finally, choosing $\beta > 0$ large enough so that the function $h \mapsto  \frac{(hr-\alpha)^2}{1-r} + 2h^2$ is strictly decreasing on $(-\infty,-\beta]$, we have
\begin{multline*}
  \int_{a}^{b} \int_{-\infty}^{-\beta} e^{- t\left( 1 - r + \frac{(hr-\alpha)^2}{1-r} + 2h^2 - \delta\right)}  \dd h\dd r
  \leq \int_a^b e^{-t \left( 1 - r + \frac{(-\beta r -\alpha)^2}{1-r} - \delta \right)} \dd r \int_{-\infty}^{-\beta} e^{- 2 h^2 t} \dd h\\
  \leq C t^{-1/2} e^{-2 \beta^2 t} \int_a^b  e^{-t \left( 1 - r + \frac{(-\beta r -\alpha)^2}{1-r} - \delta \right)} \dd r \leq C t^{-1/2} e^{-t(2 \beta^2 - \delta)},
\end{multline*}
which, if we choose $\beta$ large enough, will be smaller than $e^{-t(2 \gamma (1-\alpha) + \eta)}$ for some $\eta > 0$, for all $t$ large enough. Using this estimate in combination with \eqref{eqn:partie1} and \eqref{eqn:partie2} allows us, by \eqref{eqn:theAim}, to show that
\[
  \limsup_{t \to \infty} \frac{1}{t} \log U_2(\sqrt{2}\alpha t, t,[at,bt]) < -2 \gamma (1-\alpha),
\]
which completes the proof of \eqref{upbdmediatime} and \eqref{upbdlargetime+}.
\end{proof}

The proof of Lemma \ref{small2ndterm} is then a combination of Lemmas \ref{lem:upbdsmalltime} and \ref{lem:upbdmediatetime}.

\subsubsection{Tightness of the normalized first splitting time}

We now precise the estimates on $\P\left(|\tau - v_\alpha t| > A \sqrt{t} , M_t \leq \sqrt{2}\alpha t\right)$, bounding this quantity as $t \to \infty$ then $A \to \infty$.

\begin{lemma}
\label{small2ndtermbadtime}
Given $\alpha \in (-\gamma,1)$, we have
\begin{align}
  \lim_{A\to \infty}& \limsup_{t \to \infty} e^{2 \gamma (1-\alpha)t} t^{-3\gamma/2} U_2(\sqrt{2}\alpha t, t, [0, v_\alpha t-A\sqrt{t}])=0;\label{upbdsmallbadtime-}\\
  \lim_{A\to \infty}& \limsup_{t \to \infty} e^{2 \gamma (1-\alpha)t} t^{-3\gamma/2} U_2(\sqrt{2}\alpha t, t, [v_\alpha t+A\sqrt{t},t])=0.\label{upbdsmallbadtime+}
\end{align}
\end{lemma}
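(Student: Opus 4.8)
The plan is to refine the crude exponential bounds of Lemma~\ref{small2ndterm} into the polynomially-accurate bounds of Lemma~\ref{small2ndtermbadtime} by combining a Laplace-type asymptotic analysis near $v_\alpha$ with the a~priori bounds from Lemmas~\ref{DSbdu} and~\ref{goodtime+position}. First I would observe that by Lemma~\ref{small2ndterm}, for any fixed $\epsilon>0$ small enough the contributions $U_2(\sqrt{2}\alpha t,t,[0,(v_\alpha-\epsilon)t])$ and $U_2(\sqrt{2}\alpha t,t,[(v_\alpha+\epsilon)t,t])$ decay like $e^{-(2\gamma(1-\alpha)+\delta)t}$ for some $\delta>0$, hence are negligible with respect to $t^{3\gamma/2}e^{-2\gamma(1-\alpha)t}$. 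Thus it suffices to bound $U_2$ restricted to the sub-interval $[(v_\alpha-\epsilon)t,v_\alpha t-A\sqrt t]$ (and symmetrically $[v_\alpha t+A\sqrt t,(v_\alpha+\epsilon)t]$), where $s$ stays in a fixed neighbourhood of $v_\alpha t$.

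On that neighbourhood I would proceed exactly as in the proof of Lemma~\ref{goodtime+position}: split the inner integral over $z\in\R$ into $[-K,K]$ and its complement. For the part over $[-K,K]$, the uniform convergence \eqref{uniformcvg} replaces $u(m_s+z,s)^2$ by $w(z)^2$ up to $(1+o(1))$, and one is left with
\[
  \frac{(v_\alpha t)^{3\gamma/2}}{\sqrt{2\pi(1-v_\alpha)}}\int e^{-tg_\alpha(u)}\sqrt t\,\dd u\int_{-K}^K e^{-\sqrt2\gamma z}w(z)^2\dd z,
\]
with $g_\alpha$ as in \eqref{galpha}, smooth and strictly convex with minimum $2\gamma(1-\alpha)$ at $v_\alpha$ and $g_\alpha''(v_\alpha)=\tfrac{4\sqrt2}{1-\alpha}>0$. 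By the Taylor expansion \eqref{Taylorg}, on $[(v_\alpha-\epsilon)t,v_\alpha t-A\sqrt t]$ the change of variable $r=\sqrt t(u-v_\alpha)$ gives $\int_{-\epsilon\sqrt t}^{-A}e^{-tg_\alpha(v_\alpha+r/\sqrt t)}\dd r = e^{-2\gamma(1-\alpha)t}\int_{-\epsilon\sqrt t}^{-A}e^{-\frac{2\sqrt2}{1-\alpha}r^2(1+o(1))}\dd r$, and since $\int_{-\infty}^{-A}e^{-cr^2}\dd r\to 0$ as $A\to\infty$, this piece is $o_A(1)\,t^{3\gamma/2}e^{-2\gamma(1-\alpha)t}$. (Some care is needed because the $o(1)$ in the Taylor remainder is only uniform on a fixed neighbourhood, but that is guaranteed by choosing $\epsilon$ fixed first, then letting $t\to\infty$, then $A\to\infty$; on $|r|\leq\epsilon\sqrt t$ one has $tg_\alpha(v_\alpha+r/\sqrt t)\geq 2\gamma(1-\alpha)t + cr^2$ for a fixed $c>0$, which dominates the tail.)

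The remaining task, and the main obstacle, is to control the part of the inner integral over $|z|>K$, i.e.\ the contribution of branching points far below $\sqrt2\alpha t-m_{t-\tau}$ or far above it, uniformly over $s$ in the neighbourhood of $v_\alpha t$ and over $K$. For $z$ large negative one uses the Gaussian weight $e^{-(m_s+z-\sqrt2\alpha t)^2/2(t-s)}$ together with $u(m_s+z,s)\le 1$; for $z$ large positive one uses instead the decay of $u(m_s+z,s)$ — here Lemma~\ref{CHbdu} giving $u(m_s+z,s)\le c_\delta e^{-\sqrt2\gamma(1-\delta)z}$ is exactly what is needed to make the $z$-integral converge and to get the $\limsup_{K\to\infty}$ to vanish. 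Assembling: for each fixed $\epsilon$, one shows $\limsup_{t\to\infty}e^{2\gamma(1-\alpha)t}t^{-3\gamma/2}U_2(\sqrt2\alpha t,t,[0,v_\alpha t-A\sqrt t]) = o_A(1)+o_K(1)+(\text{exponentially small in }t)$, and then letting $K\to\infty$ and $A\to\infty$ yields \eqref{upbdsmallbadtime-}. The bound \eqref{upbdsmallbadtime+} is obtained by the identical argument applied to $[v_\alpha t+A\sqrt t,(v_\alpha+\epsilon)t]$, using \eqref{eqn:ubEnding} to discard $[(v_\alpha+\epsilon)t,t]$. I would expect the delicate point to be the uniformity of all estimates in the triple limit $t\to\infty$, then $K\to\infty$, then $A\to\infty$, which is why one fixes $\epsilon$ before everything else and uses the quantitative convexity of $g_\alpha$ rather than a bare $o(h^2)$ remainder.
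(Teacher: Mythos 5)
Your overall strategy—peel off $[(v_\alpha\pm\epsilon)t]^c$ via Lemma~\ref{small2ndterm}, then run a Laplace/Gaussian estimate near $v_\alpha t$ with Lemma~\ref{CHbdu} providing the a~priori decay of $u$—is the same as the paper's. But two steps as you wrote them do not go through.

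First, the roles of $z>K$ and $z<-K$ are reversed. With $s\approx v_\alpha t$, the $z$-dependence of the kernel is $e^{-\sqrt{2}\frac{u-\alpha}{1-u}z}\approx e^{-\sqrt{2}\gamma z}$, which \emph{decays} as $z\to+\infty$ and \emph{blows up} as $z\to-\infty$. Since $u(m_s+z,s)\to w(z)\to 1$ as $z\to+\infty$, there is nothing like $u(m_s+z,s)\le c_\delta e^{-\sqrt{2}\gamma(1-\delta)z}$ for large positive $z$; Lemma~\ref{CHbdu} gives $u(m_s+z,s)\le c_\delta e^{\sqrt{2}\gamma(1-\delta)z}$ for $z\le -K_\delta$, and it is this bound that must be used against the growing factor $e^{\sqrt{2}\gamma|z|}$ on the side $z\to-\infty$ (using $2\gamma(1-\delta)>\gamma'$ for $\delta,\epsilon$ small so the product still decays). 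On the side $z\to+\infty$, one simply uses $u\le1$ and the decaying weight.

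Second, and more importantly, the displayed expression
$\frac{(v_\alpha t)^{3\gamma/2}}{\sqrt{2\pi(1-v_\alpha)}}\int e^{-tg_\alpha(u)}\sqrt t\,\dd u\int_{-K}^K e^{-\sqrt2\gamma z}w(z)^2\dd z$
is not what the substitution actually produces on a \emph{macroscopic} window $u\in[v_\alpha-\epsilon,v_\alpha-A/\sqrt t]$. The true prefactor is $(ut)^{\frac{3}{2}\frac{u-\alpha}{1-u}}$ and the inner exponent is $e^{-\sqrt2\frac{u-\alpha}{1-u}z}$, both $u$-dependent; $\frac{u-\alpha}{1-u}$ differs from $\gamma$ by $O(\epsilon)$ on your window, so the prefactor can be off from $(v_\alpha t)^{3\gamma/2}$ by a factor of order $t^{O(\epsilon)}$ on the side $u>v_\alpha$. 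This cannot be ignored: after dividing out $t^{3\gamma/2}e^{-2\gamma(1-\alpha)t}$ the integrand becomes $e^{-cr^2}\exp(Cr\log t/\sqrt t)$ in the variable $r=\sqrt t(u-v_\alpha)$, and one must complete the square (the shift is $O(\log t/\sqrt t)\to0$) to see that the tail integral is indeed $o_A(1)$ uniformly in $t$. The paper avoids this bookkeeping by inserting an intermediate reduction: Lemma~\ref{lem:small2ndtermbadtime1} shows that the contribution from $|s-v_\alpha t|\in[\sqrt t\log t,\epsilon t]$ decays faster than any polynomial (the Gaussian $e^{-c(\log t)^2}$ beats any power of $t$), and on the remaining window $|s-v_\alpha t|\le\sqrt t\log t$ one has $\frac{u-\alpha}{1-u}=\gamma+o_t(1)$ and $a_t(v)=a_t(0)+o_t(1)$, so the $u$-dependent corrections genuinely become $1+o_t(1)$ and your formula becomes correct. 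Without either the completion-of-square argument or this intermediate reduction, the claim "$o_A(1)\,t^{3\gamma/2}e^{-2\gamma(1-\alpha)t}$" does not follow from what you wrote.
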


As a first step, we show that with high probability, $|\tau - v_\alpha t| = o(t^{1/2}\log t)$ conditioned on the maximal displacement being small.
\begin{lemma}\label{lem:small2ndtermbadtime1}
Let $\alpha \in (-\gamma,1)$. There exists $\epsilon_\alpha > 0$ such that for all $\epsilon\in(0, \epsilon_\alpha)$, for all $\rho > 0$ we have
\begin{align}
  \limsup_{t \to \infty} t^\rho e^{2 \gamma(1-\alpha)t} U_2(\sqrt{2}\alpha t, t, [(v_\alpha-\epsilon )t, v_\alpha t-\sqrt{t}\log t])&=0;\label{upbdlargebadtime-}\\
  \limsup_{t \to \infty} t^\rho e^{2 \gamma(1-\alpha)t} U_2(\sqrt{2}\alpha t, t, [v_\alpha t+\sqrt{t}\log t, (v_\alpha +\epsilon )t])&=0.\label{upbdlargebadtime+}
\end{align}
\end{lemma}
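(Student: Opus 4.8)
The plan is to sharpen the one–dimensional Laplace estimate underlying Lemma~\ref{lem:upbdmediatetime}, this time keeping track of the behaviour of $g_\alpha$ near its minimiser $v_\alpha$. The point is that once the first branching time is forced to lie at distance at least $\sqrt{t}\log t$ from its typical location, the Laplace exponent $t g_\alpha(\cdot)$ overshoots $2\gamma(1-\alpha)t$ by a quantity of order $(\log t)^2$, and $e^{-c(\log t)^2}$ beats any power of $t$, whence the statement. One cannot simply reuse \eqref{eqn:theAim}: the a priori bound of Lemma~\ref{DSbdu} used there carries a multiplicative error $e^{\delta t}$ which would wipe out the $e^{-c(\log t)^2}$ gain; instead one must bound $u$ near its concentration region by the non-asymptotic estimate of Lemma~\ref{CHbdu}, which only costs constant factors.

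First I would dispose of the position integral. Set $\mu_s := m_s-\sqrt{2}\alpha t = \sqrt{2}(s-\alpha t)-\tfrac{3}{2\sqrt{2}}\log s$; for $s$ in the windows $[(v_\alpha-\epsilon)t,\,v_\alpha t-\sqrt{t}\log t]$ or $[v_\alpha t+\sqrt{t}\log t,\,(v_\alpha+\epsilon)t]$ and $\epsilon$ small, $\mu_s$ is positive, of order $t$, and $\mu_s/(t-s)<2\sqrt{2}\gamma(1-\delta)$ for a fixed $\delta\in(0,1/2)$. Writing the position as $m_s+w$ and splitting at $w=-K_\delta$ (the constant of Lemma~\ref{CHbdu}): for $w\geq -K_\delta$ bound $u(m_s+w,s)\leq 1$ and integrate the Gaussian tail with \eqref{basic}; for $w<-K_\delta$ use $u(m_s+w,s)^2\leq c_\delta^2 e^{2\sqrt{2}\gamma(1-\delta)w}$, and note that, thanks to $\mu_s/(t-s)<2\sqrt{2}\gamma(1-\delta)$, the exponent $-\tfrac{(w+\mu_s)^2}{2(t-s)}+2\sqrt{2}\gamma(1-\delta)w$ is increasing on $(-\infty,-K_\delta)$ with derivative bounded below by a positive constant, so this part is summable geometrically from $w=-K_\delta$. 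This gives
\[
  \int_{\R}e^{-\frac{(m_s+w-\sqrt{2}\alpha t)^2}{2(t-s)}}u(m_s+w,s)^2\,\dd w \;\leq\; C_\delta\, e^{-\frac{\mu_s^2}{2(t-s)}},
\]
with $C_\delta$ independent of $s$ and of large $t$; this uniformity is what makes the rest work.

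Then I would run the time integral. Since $(t-s)+\tfrac{\mu_s^2}{2(t-s)}=t\,g_\alpha(s/t)-\tfrac{3(s/t-\alpha)}{2(1-s/t)}\log s+O((\log t)^2/t)$, and on the windows the logarithmic factor contributes only a polynomial prefactor $t^{3\gamma/2+O(\epsilon)}$ uniformly, one obtains $U_2(\sqrt{2}\alpha t,t,[\cdots])\leq C_\delta\, t^{C}\int e^{-t g_\alpha(u)}\,\dd u$ over $u\in[v_\alpha-\epsilon,\,v_\alpha-(\log t)/\sqrt{t}]$ (resp. $[v_\alpha+(\log t)/\sqrt{t},\,v_\alpha+\epsilon]$). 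By strict convexity of $g_\alpha$, $g_\alpha(v_\alpha\pm h)\geq 2\gamma(1-\alpha)+c_0h^2$ for $h\in[0,\epsilon]$, so the tail bound $\int_a^\infty e^{-c_0 th^2}\dd h\leq \tfrac{e^{-c_0 ta^2}}{2c_0 ta}$ applied with $a=(\log t)/\sqrt{t}$ yields $\int e^{-t g_\alpha(u)}\dd u\leq C\,e^{-2\gamma(1-\alpha)t}e^{-c_0(\log t)^2}/(\sqrt{t}\log t)$. Hence $t^{\rho}e^{2\gamma(1-\alpha)t}U_2(\sqrt{2}\alpha t,t,[\cdots])\leq C_\delta' t^{\rho+C}e^{-c_0(\log t)^2}\to 0$, which gives \eqref{upbdlargebadtime-} and, by the same computation on the right window, \eqref{upbdlargebadtime+}.

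The main obstacle is the displayed position estimate with a constant $C_\delta$ uniform over the whole shrinking-to-$v_\alpha$ time window: this is exactly where Lemma~\ref{CHbdu} (rather than the cruder Lemma~\ref{DSbdu}) is indispensable, and verifying that for every admissible $s$ the left tail $w<-K_\delta$ is dominated by its value at $w=-K_\delta$ is what pins down the smallness condition $\epsilon<\epsilon_\alpha$ (together with the requirements $\mu_s>0$ and $v_\alpha+\epsilon<1$). Everything else is bookkeeping of polynomial factors and Taylor's estimate \eqref{Taylorg} for $g_\alpha$.
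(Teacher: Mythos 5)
Your proof is correct and follows essentially the same strategy as the paper's own: replace the crude a priori estimate of Lemma~\ref{DSbdu} (costing $e^{\delta t}$) by the non-asymptotic bound of Lemma~\ref{CHbdu} so that the position integral is controlled uniformly over the time window up to polynomial-in-$t$ factors, and then exploit strict convexity of $g_\alpha$ at $v_\alpha$ to convert the exclusion $|s-v_\alpha t|\geq\sqrt{t}\log t$ into a $e^{-c(\log t)^2}$ factor that kills any power of $t$. The differences from the paper's write-up are cosmetic (you split the position integral at $w=-K_\delta$ rather than at $y=0$ after the shift by $a_t(v)$, and you absorb the polynomial prefactor into a single $t^C$); incidentally your quadratic lower bound $g_\alpha(v_\alpha\pm h)\geq 2\gamma(1-\alpha)+c_0h^2$ carries the correct sign, whereas the paper's displayed inequality at that point, ``$g_\alpha(r)\leq g_\alpha(v_\alpha)-c(r-v_\alpha)^2$'', is a typo for the reverse, consistent with the $e^{-ctv^2}$ used immediately afterwards.
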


\begin{proof}
The two formulas being proved in a very similar way, we only prove the first one. Note that without loss of generality, one can choose $\epsilon>0$ small enough that $v_\alpha - 2\epsilon > \min(\alpha,0)$. By definition of $U_2$, we have
\begin{align*}
  &U_2(\sqrt{2}\alpha t, t, [(v_{\alpha} - \epsilon) t, v_\alpha t - \sqrt{t}\log t ])\\
  = &\int_{(v_\alpha-\epsilon)t}^{v_\alpha t - \sqrt{t}\log t} \dd s \int_\R \frac{\dd z}{\sqrt{2 \pi t}} e^{-(t-s)- \frac{(\sqrt{2} s +  z-\sqrt{2}\alpha t)^2}{2(t-s)}} u(\sqrt{2}s + z,s)^2\\
  \leq & t^{1/2}\int_{v_\alpha - \epsilon}^{v_\alpha - \frac{\log t}{\sqrt{t}}} \dd u \int_\R\dd z e^{- t(1-u) + \frac{ \left(z + \sqrt{2}t( u -\alpha) \right)^2}{2t(1-u)}} u(\sqrt{2}u t + z,ut)^2\\
  \leq & t^{1/2} \int_{v_\alpha-\epsilon}^{v_\alpha - \frac{\log t}{\sqrt{t}}} \dd u e^{-tg_\alpha (u)} \int_\R \dd z e^{-\frac{z(2\sqrt{2}t (u-\alpha) + z)}{2t(1-u)}} u(\sqrt{2}u t + z,ut)^2,
\end{align*}
with $g_\alpha$ the function defined in \eqref{galpha}. Using \eqref{Taylorg}, there exists $c>0$ such that for all $r \in [v_\alpha - \epsilon,v_\alpha]$, $g_\alpha(r) \leq g_\alpha(v_\alpha) - c (r-v_\alpha)^2.$
Thus
\begin{multline*}
  e^{2 \gamma(1-\alpha) t} U_2(\sqrt{2}\alpha t, t, [(v_{\alpha} - \epsilon) t, v_\alpha t - \sqrt{t}\log t ])\\
  \leq t \int_{-\epsilon}^{-\frac{\log t}{t^{1/2}}} \dd v e^{- c t v^2} \int_\R\dd z e^{-\frac{z(2\sqrt{2}t (v_\alpha + v-\alpha) + z)}{2t(1-v_\alpha - v)}} u(\sqrt{2}(v_\alpha + v) t+z,(v_\alpha + v)t)^2.
\end{multline*}

We now use Lemma \ref{CHbdu}, i.e. that for all $\delta > 0$ there exists $c_\delta > 0$ such that for all $t \geq 1$ and $z \in \R$, we have $u(m_t - z,t) \leq c_\delta e^{-\sqrt{2}\gamma (1-\delta)z_+}$. Therefore, up to a change of variables, for all $v \in [-\epsilon,0]$, writing $a_t(v) = \frac{3}{2\sqrt{2}} \log ((v_\alpha + v)t)$, we have
\begin{align*}
  &\int_\R\dd z  e^{-\frac{z(2\sqrt{2}t (v_\alpha + v-\alpha) + z)}{2t(1-v_\alpha - v)}} u(\sqrt{2}(v_\alpha + v) t+z,(v_\alpha + v)t)^2\\
  \leq & \int_\R \dd y  e^{\frac{(y+a_t(v))(2\sqrt{2}t (v_\alpha + v-\alpha) - (y+a_t(v)))}{2t(1-v_\alpha - v)}} u(m_{(v_\alpha + v)t} - y,(v_\alpha + v)t)^2\\
  \leq & c_\delta \int_\R \dd y  e^{\frac{(y+a_t(v))(2\sqrt{2}t (v_\alpha + v-\alpha) -(y+a_t(v)))}{2t(1-v_\alpha - v)}} e^{-2 \sqrt{2} \gamma (1-\delta) y_+}.
\end{align*}
As a result, we get
\begin{multline}
  \label{eqn:step}
    e^{2 \gamma(1-\alpha) t} U_2(\sqrt{2}\alpha t, t, [(v_{\alpha} - \epsilon) t, v_\alpha t - \sqrt{t}\log t ])\\
    \leq C t^{1/2} \int_{-\epsilon}^{-\frac{\log t}{t^{1/2}}} \dd v e^{- c t v^2} \int_\R\dd y e^{\frac{(y+a_t(v))(2\sqrt{2}t (v_\alpha + v-\alpha) -(y+a_t(v)))}{2t(1-v_\alpha - v)}} e^{-2 \sqrt{2} \gamma (1-\delta) y_+}.
\end{multline}
We now bound this quantity in two different ways for $y \geq 0$ and $y \leq 0$.

We first observe that  for all $v \in [-\epsilon,0]$, using that $v_\alpha >\alpha + 2 \epsilon$,
\begin{multline}
  \int_{-\infty}^0 \dd y  e^{\frac{(y+a_t(v))(2\sqrt{2}t (v_\alpha + v-\alpha) -(y+a_t(v)))}{2t(1-v_\alpha - v)}} \leq \int_{-\infty}^0 \dd y  e^{\frac{(y+a_t(v))(2\sqrt{2}t (v_\alpha + v-\alpha))}{2t(1-v_\alpha - v)}}\\
  \leq  \frac{1 - v_\alpha - v}{\sqrt{2}(v_\alpha + v -\alpha)} \left( (v+v_\alpha) t \right)^{\frac{3}{2} \frac{v_\alpha + v -\alpha}{1-v_\alpha - v}}
  \leq  \frac{1-v_\alpha + \epsilon}{2 \sqrt{2}\epsilon} \left( v_\alpha  t\right)^{\frac{3}{2}\frac{v_\alpha -\alpha}{(1 - v_\alpha)}}. \label{eqn:apoint}
\end{multline}
Similarly, we have
\begin{multline}
  \int_{0}^\infty \dd y  e^{\frac{(y+a_t(v))(2\sqrt{2}t (v_\alpha + v-\alpha) -(y+a_t(v)))}{2t(1-v_\alpha - v)}} e^{-2 \sqrt{2} \gamma (1-\delta) y}\\
  \leq  e^{2 \sqrt{2}\gamma (1-\delta) a_t(v)} \int_{a_t(v)}^\infty \dd x e^{x \left( \sqrt{2}\frac{(v_\alpha -\alpha)}{(1-v_\alpha)} - 2\sqrt{2}\gamma (1-\delta) \right)}
  \leq  \frac{1}{\sqrt{2}\gamma (1 - 2\delta)} (v_\alpha t)^{3 \gamma (1-2\delta)/2}, \label{eqn:quisaitattendre}
\end{multline}
for all $\delta > 0$ small enough, using that $v_\alpha -\alpha = \frac{\gamma}{\sqrt{2}} (1 -\alpha) = \gamma (1-v_\alpha)$.

Hence, plugging \eqref{eqn:apoint} and \eqref{eqn:quisaitattendre} into \eqref{eqn:step}, we deduce that there exist $C>0$ and $\rho > 0$ so that for all $t \geq 1$ large enough,
\begin{multline*}
\qquad  e^{2 \gamma(1-\alpha) t} U_2(\sqrt{2}\alpha t, t, [(v_{\alpha} - \epsilon) t, v_\alpha t - \sqrt{t}\log t ])\\
  \leq C t^\rho \int_{-\epsilon}^{- \frac{\log t}{t^{1/2}}} \dd v e^{-ct v^2} \leq C t^{\rho}e^{-c(\log t)^2},\qquad
\end{multline*}
which concludes the proof of \eqref{upbdlargebadtime-}. 
\end{proof}

\begin{proof}[Proof of Lemma \ref{small2ndtermbadtime}]
By Lemmas \ref{small2ndterm} and \ref{lem:small2ndtermbadtime1}, to prove Lemma \ref{small2ndtermbadtime}, it is enough to bound for all $t$ large enough, the quantities $U_2(\sqrt{2}\alpha t, t, [v_{\az}t-\sqrt{t}\log t, v_{\az}t-A\sqrt{t}])$ and $U_2(\sqrt{2}\alpha t, t, [v_{\az}t+A\sqrt{t}v_{\az}t-\sqrt{t}\log t])$ by $M(A)e^{-2\gamma(1-\alpha)t}t^{-3\gamma/2}$, with $A \mapsto M(A)$ a positive function converging to $0$ as $A \to \infty$.

The proofs of \eqref{upbdsmallbadtime-} and \eqref{upbdsmallbadtime+} being very similar and symmetric, we only prove the second one. We write
\begin{align*}
  &U_2(\sqrt{2}\alpha t, t, [v_{\az}t+A\sqrt{t}, v_{\az}t+\sqrt{t}\log t])\\
  \leq &t^{-1/2}\int_{v_\alpha t + A \sqrt{t}}^{v_\alpha t + \sqrt{t} \log t} \dd s \int_\R \dd z e^{- (t-s) + \frac{(z-m_s)^2}{2(t-s)}} u(m_s + z, s)^2\\
  \leq &t^{1/2} e^{-2 \gamma(1-\alpha) t}
    \int_{At^{-1/2}}^{t^{-1/2}\log t } \!\!\!\!\!\!\!\! \dd v e^{-ctv^2}
    \int_\R\dd y e^{\frac{(y+a_t(v))(2\sqrt{2}t (v_\alpha + v-\alpha) -(y+a_t(v)))}{2t(1-v_\alpha - v)}} e^{-2 \sqrt{2} \gamma (1-\delta) y_+},
\end{align*}
with the same computations as the ones used to obtain \eqref{eqn:step}, using Lemma~\ref{CHbdu}.

Using that $|v| \leq t^{-1/2}\log t$, hence that $a_t(v) = \frac{3}{2\sqrt{2}} \log ((v_\alpha + v)t) = a_t(0) + o_t(1)$, we obtain, for all $t$ large enough:
\begin{align}
  &U_2(\sqrt{2}\alpha t, t, [v_{\az}t+A\sqrt{t}, v_{\az}t+\sqrt{t}\log t])\nonumber\\
  \leq & 2 c_\delta t^{1/2}e^{-2 \gamma(1-\alpha) t} \int_{At^{-1/2}}^{t^{-1/2}\log t } \!\!\!\!\!\!\!\! \dd v e^{-ctv^2} \int_\R\dd y e^{\frac{(y + a_t)(2 \sqrt{2} t (v_\alpha + v -\alpha) - (y + a_t)}{2t(1-v_\alpha - v)}} e^{-2 \sqrt{2}\gamma(1-\delta) y_+},\label{notset}
\end{align}
where $a_t = a_t(0)= \frac{3}{2} \log (v_\alpha t)$.

We then compute for all $|v| < t^{-1/2} \log t$,
\begin{multline*}
  \int_{-\infty}^0 \dd y e^{\frac{(y + a_t)(2 \sqrt{2} t (v_\alpha + v -\alpha) -(y + a_t))}{2t(1-v_\alpha - v)}}
  \leq \int_{-\infty}^0 \dd y e^{\sqrt{2}\frac{(y+a_t)(v_\alpha + v -\alpha)}{(1-v_\alpha - v)}}\\
  \leq \exp\left(  \sqrt{2} a_t  \frac{v_\alpha + v-\alpha}{1 - v_\alpha - v} \right) \leq \exp \left( \sqrt{2} a_t \left(\frac{v_\alpha -\alpha}{1-v_\alpha} + C v\right) \right),
\end{multline*}
for all $t$ large enough, using Taylor's expansion. Hence, with $(v_\alpha-\alpha)/(1-v_\alpha) = \gamma$, there exists $C>0$ such that for all $t$ large enough,
\begin{equation}
  \label{eqn:fstPart}
  \int_{-\infty}^0 \dd y e^{\frac{(y + a_t)(2 t (v_\alpha + v -\alpha) - (y+a_t))}{t(1-v_\alpha - v)}} \leq C (v_\alpha t)^{3\gamma/2}.
\end{equation}
Similarly, we have
\begin{align*}
  &\int_0^{\infty} \dd y e^{\frac{(y + a_t)(2 \sqrt{2} t (v_\alpha + v -\alpha) - (y + a_t))}{2t(1-v_\alpha - v)}} e^{-2 \sqrt{2}\gamma(1-\delta) y}\\
  \leq &e^{2 \sqrt{2}\gamma(1-\delta)a_t} \int_{a_t}^\infty \dd x e^{\frac{x(2\sqrt{2}t (v_\alpha + v -\alpha) - x)}{2t(1-v_\alpha - v)} - 2 \sqrt{2}\gamma(1-\delta) x}\\
  \leq &(v_\alpha t)^{3\gamma (1 - \delta)} \int_{a_t}^\infty \dd x e^{\sqrt{2}x \left( \frac{v_\alpha + v -\alpha}{1-v_\alpha - v} -2 \gamma(1-\delta)\right)}
  \leq C (v_\alpha t)^{3 \gamma(1-\delta)} (v_\alpha t)^{\frac{3}{2}\frac{v_\alpha - v -\alpha}{1-v_\alpha}  - 3 \gamma (1-\delta)}.
\end{align*}
Hence, using that $\frac{v_\alpha - v -\alpha}{1-v_\alpha} = \gamma + O(t^{-1/2}\log t)$, we obtain that for all $t$ large enough
\begin{equation}
  \label{eqn:sndPart}
  \int_{0}^\infty \dd y e^{\frac{(y + a_t)(2 t (v_\alpha + v -\alpha) - (y + a_t))}{t(1-v_\alpha - v)}} \leq C (v_\alpha t)^{3\gamma/2}.
\end{equation}
As a result, with \eqref{eqn:fstPart} and \eqref{eqn:sndPart}, \eqref{notset} becomes
\begin{equation*}
  U_2(\sqrt{2}\alpha t, t, [v_{\az}t+A\sqrt{t}, v_{\az}t+\sqrt{t}\log t])
  \leq C t^{3\gamma/2} e^{-2 \gamma(1-\alpha)t} \int_{A }^{\infty} e^{-cw^2} \dd w.
\end{equation*}
By dominated convergence, the proof of \eqref{upbdsmallbadtime+} is now complete.
\end{proof}

\subsubsection{Tightness of the centred splitting position}

To complete the proof of Lemma \ref{badtime+badposition}, we prove that the position at which the first splitting occurs $X_\emptyset(\tau)$ is tightly concentrated around the position $\sqrt{2}\alpha t - m_{t-\tau}$, on the event $|\tau - v_\alpha t| \leq A \sqrt{t}$.

\begin{lemma}\label{goodtimebadposition+}
For $1-\sqrt{2}<\alpha<1$ and for any fixed $A>0$,
\begin{align}
\lim_{K\to\infty}\lim_{t\to\infty}\frac{e^{2\gamma(1-\alpha)t}}{t^{3\gamma/2}}U_2(\sqrt{2}\alpha t, t,[v_\alpha t-A\sqrt{t}, v_\alpha+A\sqrt{t}],[-K,K]^c)&=0.\label{upbdsmallpositionA}
\end{align}
\end{lemma}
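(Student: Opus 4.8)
\textbf{Plan of proof of Lemma \ref{goodtimebadposition+}.} Fix $A>0$. The strategy is to reduce the double integral defining $U_2(\sqrt{2}\alpha t, t, I_{t,A},[-K,K]^c)$ to the product of the ``leading factor'' $e^{-2\gamma(1-\alpha)t}(v_\alpha t)^{3\gamma/2}$, obtained from the $z=0$ Gaussian weight exactly as in the proof of Lemma \ref{goodtime+position}, and a tail integral over $z\in[-K,K]^c$ that tends to $0$ as $K\to\infty$. Write $\mu_s:=m_s-\sqrt{2}\alpha t$. For $s\in I_{t,A}$ one has $\mu_s=\gamma(1-\alpha)t+O(\sqrt{t})>0$ and $t-s=\tfrac{1-\alpha}{\sqrt 2}t+O(\sqrt t)$, hence $\tfrac{\mu_s}{t-s}\to\sqrt 2\gamma$ \emph{uniformly} over $s\in I_{t,A}$; fix $\epsilon>0$ small, so that $\sqrt 2\gamma-\epsilon\le \tfrac{\mu_s}{t-s}\le \sqrt 2\gamma+\epsilon$ for all $s\in I_{t,A}$ and all $t$ large. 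Moreover, writing $s=v_\alpha t+r\sqrt t$ and Taylor-expanding $g_\alpha$ via \eqref{Taylorg} (with $g_\alpha$ as in \eqref{galpha}), exactly the computation of Lemma \ref{goodtime+position} gives, for some constant $C$ and all $t$ large,
\[
  \int_{I_{t,A}} \frac{e^{-(t-s)-\frac{\mu_s^2}{2(t-s)}}}{\sqrt{2\pi(t-s)}}\dd s \;\le\; C\, e^{-2\gamma(1-\alpha)t}(v_\alpha t)^{3\gamma/2}.
\]

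\textbf{Positive tail.} On $\{z>K\}$ bound $u(m_s+z,s)^2\le 1$ and use $(\mu_s+z)^2\ge \mu_s^2+2\mu_s z$ together with $\tfrac{\mu_s z}{t-s}\ge(\sqrt 2\gamma-\epsilon)z$ to get
\[
  \int_K^\infty e^{-\frac{(\mu_s+z)^2}{2(t-s)}}u(m_s+z,s)^2\dd z\;\le\; e^{-\frac{\mu_s^2}{2(t-s)}}\int_K^\infty e^{-(\sqrt 2\gamma-\epsilon)z}\dd z=\frac{e^{-(\sqrt 2\gamma-\epsilon)K}}{\sqrt 2\gamma-\epsilon}\,e^{-\frac{\mu_s^2}{2(t-s)}}.
\]
Multiplying by $\tfrac{e^{-(t-s)}}{\sqrt{2\pi(t-s)}}$ and integrating over $I_{t,A}$, the displayed bound above yields $\limsup_{t\to\infty}\tfrac{e^{2\gamma(1-\alpha)t}}{t^{3\gamma/2}}U_2(\sqrt 2\alpha t,t,I_{t,A},(K,\infty))\le C' e^{-(\sqrt 2\gamma-\epsilon)K}\to 0$ as $K\to\infty$.

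\textbf{Negative tail.} This is the delicate part, where the Gaussian weight works against us: for $z\le -K$, writing $w=-z$, one has $(\mu_s+z)^2\ge\mu_s^2-2\mu_s w$, so $e^{-\frac{(\mu_s+z)^2}{2(t-s)}}\le e^{-\frac{\mu_s^2}{2(t-s)}}e^{(\sqrt 2\gamma+\epsilon)w}$, which grows in $w$. The decay must come from $u$. Fix $\delta\in(0,1)$ with $\sqrt 2\gamma(1-2\delta)>\epsilon$ and take $K\ge K_\delta$, with $K_\delta,T_\delta$ from Lemma \ref{CHbdu}; since $s\ge v_\alpha t-A\sqrt t\ge T_\delta$ for $t$ large, Lemma \ref{CHbdu} gives $u(m_s+z,s)^2=u(m_s-w,s)^2\le c_\delta^2 e^{-2\sqrt 2\gamma(1-\delta)w}$ for all $s\in I_{t,A}$, $w\ge K$. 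Hence
\[
  \int_{-\infty}^{-K} e^{-\frac{(\mu_s+z)^2}{2(t-s)}}u(m_s+z,s)^2\dd z\;\le\; c_\delta^2\, e^{-\frac{\mu_s^2}{2(t-s)}}\int_K^\infty e^{-(\sqrt 2\gamma(1-2\delta)-\epsilon)w}\dd w\;=\;\frac{c_\delta^2}{\sqrt 2\gamma(1-2\delta)-\epsilon}\,e^{-(\sqrt 2\gamma(1-2\delta)-\epsilon)K}e^{-\frac{\mu_s^2}{2(t-s)}}.
\]
As before, multiplying by $\tfrac{e^{-(t-s)}}{\sqrt{2\pi(t-s)}}$ and integrating over $I_{t,A}$ gives $\limsup_{t\to\infty}\tfrac{e^{2\gamma(1-\alpha)t}}{t^{3\gamma/2}}U_2(\sqrt 2\alpha t,t,I_{t,A},(-\infty,-K))\le C'' e^{-(\sqrt 2\gamma(1-2\delta)-\epsilon)K}\to 0$ as $K\to\infty$. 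Adding the two tail contributions proves \eqref{upbdsmallpositionA}.

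\textbf{Main obstacle.} The only genuine difficulty is controlling the negative tail: the Gaussian factor contributes a \emph{growing} exponential $e^{\sqrt 2\gamma|z|}$, and one needs the sharp decay rate $\sqrt 2\gamma$ of $u(m_s-|z|,s)$ — which is precisely the content of Lemma \ref{CHbdu}, and why its non-asymptotic, $(1-\delta)$-loss formulation (valid uniformly in $s$ as $s\to\infty$) is exactly what is needed here. Everything else is a routine Laplace-method bookkeeping identical in spirit to the estimates already carried out in Lemmas \ref{goodtime+position} and \ref{small2ndtermbadtime}.
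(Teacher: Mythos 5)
Your proof is correct and follows essentially the same approach as the paper's: a Laplace-method bound on the $s$-integral over $I_{t,A}$ to extract the leading factor $e^{-2\gamma(1-\alpha)t}(v_\alpha t)^{3\gamma/2}$, the trivial bound $u\le 1$ for the positive tail where the Gaussian weight already decays, and Lemma~\ref{CHbdu} for the negative tail where the Gaussian weight grows and the sharp decay of $u(m_s-\cdot,s)$ is needed. The only cosmetic difference is that you drop the $z^2$ term in $(\mu_s+z)^2$ and work directly with $\mu_s/(t-s)\to\sqrt2\gamma$, whereas the paper performs the change of variables $y=-z$ and re-expands the quadratic form as in Lemma~\ref{small2ndtermbadtime}; this incidentally gives you the slightly sharper decay rate $\sqrt2\gamma(1-2\delta)-\epsilon$ in $K$, but both are more than enough for the conclusion.
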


\begin{proof}
Let $K > 0$ and $A > 0$. We observe that with similar computations as in the proof of Lemma \ref{small2ndtermbadtime}, setting $a_t = \frac{3}{2\sqrt{2}} \log (v_\alpha t)$, we have
\begin{align*}
  &e^{2 \gamma(1-\alpha) t} U_2(\sqrt{2}\alpha t, t,[v_\alpha t-A\sqrt{t}, v_\alpha t+A\sqrt{t}],[-K,K]^c)\\
\leq & C t^{1/2}  \int_{-At^{-1/2}}^{At^{-1/2}} \dd v e^{- c t v^2} \int_{[-K,K]^c} \dd y e^{\frac{(y + a_t)(2 \sqrt{2} t (v_\alpha + v -\alpha) - (y + a_t))}{2t(1-v_\alpha - v)}} u(m_{(v_\alpha + v)t}-y,(v_\alpha + v)t) \\
  \leq &C t^{1/2}  \int_{-At^{-1/2}}^{At^{-1/2}} \dd v e^{- c t v^2} \int_{[-K,K]^c} \dd y e^{\frac{(y + a_t)(2 \sqrt{2} t (v_\alpha + v -\alpha) - (y + a_t))}{2t(1-v_\alpha - v)}}  e^{-2 \sqrt{2}\gamma(1-\delta) y_+},
\end{align*}
where we used again Lemma \ref{CHbdu}.

We then observe, with similar computations as in the proof of Lemma \ref{small2ndtermbadtime} again that
\begin{align*}
  &\int_{-\infty}^{-K} \dd y e^{\frac{(y + a_t)(2 \sqrt{2} t (v_\alpha + v -\alpha) - (y + a_t))}{2t(1-v_\alpha - v)}} \leq t^{3\gamma/2} e^{-(\gamma-\delta) K},\\
  &\int_{K}^\infty \dd y e^{\frac{(y + a_t)(2 \sqrt{2} t (v_\alpha + v -\alpha) - (y + a_t))}{2t(1-v_\alpha - v)}} e^{-2 \sqrt{2}\gamma(1-\delta) y} \leq t^{3\gamma/2} e^{-(\gamma-\delta) K},
\end{align*}
using that for all $t$ large enough, $\left| \frac{v_\alpha - v-\alpha}{1 - v_\alpha} - \gamma\right| \leq \delta$. Therefore, letting $t \to \infty$ then $K \to \infty$, we obtain, for all $A > 0$, that \eqref{upbdsmallpositionA} holds.
\end{proof}

Lemma \ref{badtime+badposition} is then a consequence of Lemmas \ref{small2ndterm}, \ref{small2ndtermbadtime} and \ref{goodtimebadposition+}.

\subsection{Proof of Lemma \ref{largetime}}
\label{lems3}

Similarly to the previous section, it is enough to prove Lemma \ref{largetime} for $\phi \equiv 0$ by a straightforward domination argument. The proof is obtained in a similar, but slightly simple fashion.

\begin{proof}
Let $\alpha<-\gamma$ here. Note that by change of variable $y=\sqrt{2} as$ and \eqref{DSupbdu}, for any $\epsilon>0$ and $A\geq t_{\epsilon,\beta}$, with $\beta = K\alpha$,
\begin{align*}
U_2(\sqrt{2}\alpha t, t, [A, t])=&\int_A^t\dd s\int_\R  \frac{e^{-(t-s)-\frac{(\sqrt{2}\alpha t-\sqrt{2}as)^2}{2(t-s)}}}{\sqrt{2\pi (t-s)}}u^2(\sqrt{2}as,s)\sqrt{2}s\dd a\\
\leq &\Sigma_1(A, t)+\Sigma_2(A,t)+\Sigma_3(A,t)+\Sigma_4(A,t),
\end{align*}
where
\begin{align*}
\Sigma_1(A, t):=&\int_A^t\dd s\int_{1}^\infty  \frac{e^{-(t-s)-\frac{(\sqrt{2}\alpha t-\sqrt{2}as)^2}{2(t-s)}}}{\sqrt{2\pi (t-s)}}\sqrt{2}s\dd a,\\
\Sigma_2(A, t):=&\int_A^t\dd s\int_{-\gamma}^1  \frac{e^{-(t-s)-\frac{(\sqrt{2}\alpha t-\sqrt{2}as)^2}{2(t-s)}}}{\sqrt{2\pi (t-s)}}e^{-4\gamma(1-a)s+2\epsilon s}\sqrt{2}s\dd a,\\
\Sigma_3(A, t):=&\int_A^t\dd s\int_{K\az}^{-\gamma}  \frac{e^{-(t-s)-\frac{(\sqrt{2}\alpha t-\sqrt{2}as)^2}{2(t-s)}}}{\sqrt{2\pi (t-s)}}e^{-2(1+a^2)s+2\epsilon s}\sqrt{2}s\dd a,\\
\Sigma_4(A, t):=&\int_A^t\dd s\int_{-\infty}^{K\az}  \frac{e^{-(t-s)-\frac{(\sqrt{2}\alpha t-\sqrt{2}as)^2}{2(t-s)}}}{\sqrt{2\pi (t-s)}}e^{-2a^2s}\sqrt{2}s\dd a.
\end{align*}
Recall $g_{\az}$ from \eqref{galpha}. By change of variables $z=\sqrt{2}as-\sqrt{2}\az t$ and $s=ut$ and by \eqref{basic}, one gets that
\begin{align*}
 \Sigma_1(A, t)=&\int_{A/t}^1 te^{-t(1-u)}\dd u \int_{\sqrt{2}ut-\sqrt{2}\az t}^\infty \frac{e^{-\frac{z^2}{2t(1-u)}}}{\sqrt{2\pi(1-u)t}}\dd z\\
 \leq &\int_{A/t}^1 te^{-t(1-u)}\frac{\sqrt{t(1-u)}}{\sqrt{2}(u-\az)t}e^{-\frac{(u-\az)^2}{1-u}t}\dd u\leq \frac{\sqrt{t}}{|\az|}\int_{A/t}^1e^{-tg_{\az}(u)}\dd u.
\end{align*}
Clearly, $g_{\az}(h)=g_{\az}(0)+g'_{\az}(0)h+o(h)$ as $|h|\to0$. Note that $g_{\az}(0)=1+\az^2$ and $g'_{\az}(0)=(\az-1)^2-2>0$ for $\az<1-\sqrt{2}$. Note that, for any $u\in[\frac{A}{\sqrt{t}},1]$,
\[
g_{\az}(u)\geq g_{\az}\left(\frac{A}{\sqrt{t}}\right)=g_{\az}(0)+(g'_{\az}(0)+o_t(1))\frac{A}{\sqrt{t}},
\]
which implies that, for $t$ sufficiently large,
\[
\frac{\sqrt{t}}{|\az|}\int_{\frac{A}{\sqrt{t}}}^1e^{-tg_{\az}(u)}\dd u\leq \frac{e^{-(1+\az^2)t}}{\sqrt{t}|\az|} te^{-\frac{Ag_{\az}'(0)}{2}\sqrt{t}}=o_t(1) \frac{e^{-(1+\az^2)t}}{\sqrt{t}}.
\]
On the other hand, since $g_{\az}(h)=g_{\az}(0)+g'_{\az}(0)h+o(h)$ as $|h|\to0$, then
\[
\frac{\sqrt{t}}{|\az|}\int_{\frac{A}{t}}^{\frac{A}{\sqrt{t}}}e^{-tg_{\az}(u)}\dd u=\frac{e^{-(1+\az^2)t}}{\sqrt{t}|\az|} \int_{\frac{A}{t}}^{\frac{A}{\sqrt{t}}} te^{-t(g'_{\az}(0)+o_t(1))u}\dd u=o_{t,A}(1)\frac{e^{-(1+\az^2)t}}{\sqrt{t}|\az|}.
\]
Thus $\Sigma_1(A, t)\leq o_{t,A}(1)\frac{e^{-(1+\az^2)t}}{\sqrt{t}|\az|}. $ Next, we shall handle  $\Sigma_2(A, t)$. If $\alpha<-2\gamma$, then $\gamma s-(\alpha+2\gamma)t>0$. So, by change of variable $z=as-\alpha t+2\gamma(s-t)$ and \eqref{basic},
\begin{align*}
\Sigma_2(A, t)=&\int_A^t e^{-(t-s)-4\gamma s+4\gamma\alpha t+4\gamma^2(t-s)+2\epsilon s}\dd s\int_{\gamma s-(\alpha+2\gamma)t}^{(1+2\gamma)s-(\alpha+2\gamma)t}\frac{e^{-\frac{z^2}{t-s}}}{\sqrt{2\pi (t-s)}}\sqrt{2}\dd z\cr \leq &\int_A^t e^{-(t-s)-4\gamma s+4\gamma\alpha t+4\gamma^2(t-s)+2\epsilon s} \frac{\sqrt{t-s}}{\sqrt{2}\left(\gamma s-(\alpha+2\gamma)t\right)}e^{-\frac{(\gamma s-(\alpha+2\gamma)t)^2}{t-s}}\dd s\\
=&\int_{A/t}^1\frac{\sqrt{t(1-u)}}{\sqrt{2}\left(\gamma u-(\alpha+2\gamma)\right)}e^{-t[\frac{(\alpha+\gamma)^2}{1-u}-2\gamma(1+\gamma)(1-u)+2-2\alpha\gamma-2\epsilon u]}\dd u\\
\leq & \frac{\sqrt{t}}{|\alpha|-2\gamma}\int_{A/t}^1e^{-tg_{\alpha, \epsilon}(u)}\dd u,
\end{align*}
where $g_{\alpha,\epsilon}(u)=\frac{(\alpha+\gamma)^2}{1-u}-(1+\gamma^2)(1-u)+2-2\alpha\gamma-2\epsilon u$. Observe that for $\epsilon\in(0,1/2)$ and $u\in(0,1)$,
\[
g'_{\alpha,\epsilon}(u)=\frac{(\alpha+\gamma)^2}{(1-u)^2}+(1+\gamma^2)-2\epsilon\geq L_\epsilon:=(\alpha+\gamma)^2+(1+\gamma^2)-2\epsilon,
\]
and that $g_{\alpha,\epsilon}(0)=\alpha^2+1$. Then, for any $h\in(0,1)$,
\[
\min_{u\in[h,1]}g_{\alpha,\epsilon}(u)\geq g_{\alpha,\epsilon}(h)\geq\alpha^2+1+L_\epsilon h.
\]
This implies that if $\alpha<-2\gamma$, then
\[
\Sigma_2(A, t)\leq  \frac{\sqrt{t}}{|\alpha|-2\gamma}\int_{A/t}^1e^{-t(\alpha^2+1+L_\epsilon u)}\dd u=\frac{e^{-(1+\alpha^2)t}}{\sqrt{t}(|\alpha|-2\gamma)} \int_{A/t}^1 e^{-L_\epsilon ut}t\dd u,
\]
which is $o_A(1)\frac{e^{-(1+\alpha^2)t}}{\sqrt{t}}$. If $-2\gamma\leq\alpha<-\gamma$, then
\begin{align*}
\Sigma_2(A, t)\leq &\int_A^t \left(\int_{-\gamma}^1  \frac{e^{-(t-s)-\frac{(\sqrt{2}\alpha t-\sqrt{2}as)^2}{2(t-s)}}}{\sqrt{2\pi (t-s)}}e^{-2\gamma(1-a)s+2\epsilon s}\sqrt{2}sda\right)\dd s\\
=&\int_A^t \left(\int_{-(\alpha+\gamma)t}^{(1+\gamma)s-(\alpha+\gamma)t}\frac{e^{-\frac{z^2}{t-s}}}{\sqrt{2\pi (t-s)}}\sqrt{2}\dd z\right)e^{\gamma^2(t-s)+2\gamma\alpha t-(t-s)-2\gamma s+2\epsilon s}\dd s\\
\leq &\int_A^t \frac{\sqrt{t-s}}{-(\alpha+\gamma)t}e^{-\frac{(\alpha+\gamma)^2t^2}{t-s}+\gamma^2(t-s)+2\gamma\alpha t-(t-s)-2\gamma s+2\epsilon s}\dd s\cr= & \int_{A/t}^1\frac{\sqrt{t(1-u)}}{|\alpha|-\gamma} e^{-t h(u)}\dd u,
\end{align*}
where in the first equality, we change variable $z=sa-\alpha t+\gamma (s-t)$, the second inequality holds by \eqref{basic} and $h(u)=\frac{(\alpha+\gamma)^2}{1-u}-2\epsilon u+(1+\alpha^2)-(\alpha+\gamma)^2$. Note that for any $\epsilon\in\left(0,\frac{(\alpha+\gamma)^2}{2}\right)$ and $u\in(0,1)$,
\[
h'(u)=\frac{(\alpha+\gamma)^2}{(1-u)^2}-2\delta\geq \tilde{L}_\epsilon:=(\alpha+\gamma)^2-2\epsilon>0,
\]
with $h(0)=\alpha^2+1$. It hence follows that if $-2\gamma\leq\alpha<-\gamma$, then
\begin{align*}
\Sigma_2(A, t)\leq \int_{A/t}^1\frac{\sqrt{t(1-u)}}{|\alpha|-\gamma} e^{-t(\alpha^2+1+\tilde{L}_\epsilon u)}\dd u
=o_A(1)\frac{e^{-(1+\alpha^2)t}}{\sqrt{t}}.
\end{align*}
For $\Sigma_3(A, t)$, one sees that
\begin{align*}
  \Sigma_3(A, t)
  =&\int_A^t e^{-t-s+2\epsilon s-\frac{2\alpha^2t^2}{2t-s}}\dd s\int_{K\alpha}^{-\gamma} e^{-\frac{s(2t-s)}{t-s}(a-\frac{\alpha t}{2t-s})^2}\frac{s}{\sqrt{\pi(t-s)}}da\\
  \leq & \int_A^t\frac{\sqrt{s}}{\sqrt{2t-s}} e^{-t-s+2\epsilon s-\frac{2\alpha^2t^2}{2t-s}}\dd s\leq \frac{e^{-(1+\alpha^2)t}}{\sqrt{t}}\int_A^t \sqrt{s}e^{-(1-2\epsilon)s}\dd s,
\end{align*}
which is $o_A(1)\frac{e^{-(1+\alpha^2)t}}{\sqrt{t}}$ as long as $\epsilon\in(0,1/2)$. On the other hand,
\begin{align*}
\Sigma_4(A, t)=&\int_A^t\dd s\int_{-\infty}^{K\alpha}  \frac{e^{-(t-s)-\frac{(\sqrt{2}\alpha t-\sqrt{2}as)^2}{2(t-s)}}}{\sqrt{2\pi (t-s)}}e^{-2a^2s}\sqrt{2}sda\\
=&\int_A^t e^{-t+s-\frac{2\alpha^2t^2}{2t-s}}\dd s\int_{-\infty}^{K\alpha} e^{-\frac{s(2t-s)}{t-s}(a-\frac{\alpha t}{2t-s})^2}\frac{s}{\sqrt{\pi(t-s)}}da\\
=&\int_A^t e^{-t+s-\frac{2\alpha^2t^2}{2t-s}}\sqrt{\frac{s}{2t-s}}\dd s\int_{-\infty}^{K\alpha-\frac{\alpha t}{2t-s}} e^{-\frac{s(2t-s)}{t-s}z^2} \frac{\dd z}{\sqrt{\pi\frac{t-s}{s(2t-s)}}}.
\end{align*}
Choose $K>1$ such that $(K-1)|\alpha|>1$ and $K\alpha-\frac{\alpha t}{2t-s}<-1$. Then by \eqref{basic},
\begin{align*}
\Sigma_4(A, t)\leq &\int_A^t e^{-t+s-\frac{2\alpha^2t^2}{2t-s}}\sqrt{\frac{s}{2t-s}}\frac{\sqrt{t-s}}{\sqrt{s(2t-s)}}e^{-\frac{s(2t-s)}{t-s}}\dd s\cr
=&\int_A^t \frac{\sqrt{t-s}}{2t-s} e^{-t-\frac{2\az^2t^2}{2t-s}} e^{-s-\frac{s^2}{t-s}}\dd s \leq \frac{e^{-(1+\alpha^2)t}}{\sqrt{t}}\int_A^t e^{-s}\dd s,
\end{align*}
as $\frac{1}{2t}\leq \frac{1}{2t-s}\leq \frac{1}{t}$ and $\sqrt{t-s}\leq \sqrt{t}$. Therefore, $\Sigma_4(A, t)=o_A(1)\frac{e^{-(1+\alpha^2)t}}{\sqrt{t}}$.
\end{proof}

\subsection{Proof of Lemma \ref{badtimecA}}
\label{lems4}

Using again a domination argument, it is enough to prove Lemma \ref{badtimecA} for $\phi \equiv 0$. We decompose it into the two following lemmas, that we prove one by one.

\begin{lemma}\label{badtimec}
\begin{align}\label{largebadtimec}
\lim_{A\to\infty}\lim_{t\to\infty}\frac{e^{(1+\gamma^2)t}}{t^{3\gamma/4}}U_2(-\sqrt{2}\gamma t, t, [A\sqrt{t},t])=&0;\\\label{smallbadtimec}
\lim_{A\to\infty}\lim_{t\to\infty}\frac{e^{(1+\gamma^2)t}}{t^{3\gamma/4}}U_2(-\sqrt{2}\gamma t, t, [0,{\sqrt{t}}/{A}])=&0.
\end{align}
\end{lemma}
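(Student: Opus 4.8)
Throughout I work with $\phi\equiv0$, which is what the statement of the lemma already involves; the same estimates hold verbatim with $u$ replaced by any $u_\phi$, $\phi\in\mathcal{C}_c^+(\R)$, since $u_\phi\le u$. Writing $y$ for the barrier seen by the two subtrees born at the first branching, so that $U_2(-\sqrt2\gamma t,t,[a,b])=\int_a^b\dd s\int_\R e^{-(t-s)}\tfrac{e^{-(y+\sqrt2\gamma t)^2/(2(t-s))}}{\sqrt{2\pi(t-s)}}\,u(y,s)^2\,\dd y$, the plan rests on the exact identity
\[
  e^{-(t-s)}\,\frac{e^{-(y+\sqrt2\gamma t)^2/(2(t-s))}}{\sqrt{2\pi(t-s)}}
  = e^{-(1+\gamma^2)t}\,\frac{e^{-\sqrt2\gamma y}\,e^{(1-\gamma^2)s}}{\sqrt{2\pi(t-s)}}\,e^{-(y+\sqrt2\gamma s)^2/(2(t-s))},
\]
valid for all $0\le s<t$, $y\in\R$ (it reduces to $(y+\sqrt2\gamma t)^2-(y+\sqrt2\gamma s)^2=\sqrt2\gamma(t-s)(2y+\sqrt2\gamma(t+s))$ together with $1+\gamma^2=2\sqrt2\gamma$), and on the arithmetic facts $1-\gamma^2-2\gamma=0$ and $m_s+\sqrt2\gamma s=2s-\tfrac{3}{2\sqrt2}\log s$ (the latter using $\sqrt2(1+\gamma)=2$). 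In particular $e^{(1-\gamma^2)s}e^{-\sqrt2\gamma m_s}=s^{3\gamma/2}$, since $e^{-\sqrt2\gamma m_s}=s^{3\gamma/2}e^{-2\gamma s}$.

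For \eqref{smallbadtimec}, since $s\le\sqrt t/A\le t/2$ I would bound the last Gaussian factor by $1$ and $(t-s)^{-1/2}$ by $Ct^{-1/2}$, so that $U_2(-\sqrt2\gamma t,t,[0,\sqrt t/A])\le Ct^{-1/2}e^{-(1+\gamma^2)t}\int_0^{\sqrt t/A}e^{(1-\gamma^2)s}I_s\,\dd s$ with $I_s:=\int_\R e^{-\sqrt2\gamma y}u(y,s)^2\,\dd y$. Splitting at $y=m_s$, using $u\le1$ above and Lemma~\ref{CHbdu} below, one gets $I_s\le C_\delta e^{-\sqrt2\gamma m_s}=C_\delta s^{3\gamma/2}e^{-2\gamma s}$ for $s\ge T_\delta$, while $I_s$ is bounded on $[0,T_\delta]$ (for instance from $u(y,s)\le\P(B_s\le y)$ and \eqref{base}). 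Since $1-\gamma^2-2\gamma=0$, this gives $e^{(1-\gamma^2)s}I_s\le C_\delta s^{3\gamma/2}$ for $s\ge T_\delta$, whence $\int_0^{\sqrt t/A}e^{(1-\gamma^2)s}I_s\,\dd s\le C_{T_\delta}+C_\delta(\sqrt t/A)^{3\gamma/2+1}$ and thus $\tfrac{e^{(1+\gamma^2)t}}{t^{3\gamma/4}}U_2(-\sqrt2\gamma t,t,[0,\sqrt t/A])\le C_\delta A^{-(3\gamma/2+1)}+o_t(1)$, which yields \eqref{smallbadtimec} on letting $t\to\infty$ then $A\to\infty$.

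For \eqref{largebadtimec} the Gaussian factor must be kept. Writing $U_2(-\sqrt2\gamma t,t,[A\sqrt t,t])=\tfrac{e^{-(1+\gamma^2)t}}{\sqrt{2\pi}}\int_{A\sqrt t}^t\tfrac{e^{(1-\gamma^2)s}}{\sqrt{t-s}}J(s,t)\,\dd s$, with $J(s,t)=\int_\R e^{-\sqrt2\gamma y}e^{-(y+\sqrt2\gamma s)^2/(2(t-s))}u(y,s)^2\,\dd y$, I would again split at $y=m_s$: for $y\ge m_s$ one has $u\le1$ and $\int_{m_s}^\infty e^{-\sqrt2\gamma y}e^{-(y+\sqrt2\gamma s)^2/(2(t-s))}\,\dd y\le\tfrac1{\sqrt2\gamma}e^{-\sqrt2\gamma m_s}e^{-(m_s+\sqrt2\gamma s)^2/(2(t-s))}$, and for $y<m_s$, Lemma~\ref{CHbdu} together with a completion of the square gives, up to an exponentially small remainder, the same type of bound $C_\delta e^{-\sqrt2\gamma m_s}e^{-c(m_s+\sqrt2\gamma s)^2/(t-s)}$. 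Using $e^{(1-\gamma^2)s}e^{-\sqrt2\gamma m_s}=s^{3\gamma/2}$ and $(m_s+\sqrt2\gamma s)^2\ge\tfrac94 s^2$ this bounds the integral by $C_\delta\int_{A\sqrt t}^t\tfrac{s^{3\gamma/2}}{\sqrt{t-s}}e^{-c's^2/(t-s)}\,\dd s$; cutting at $s=t/2$, the part $s\le t/2$ is, after the substitution $s=r\sqrt t$, at most $Ct^{3\gamma/4}\int_A^\infty r^{3\gamma/2}e^{-c'r^2}\,\dd r$, and the part $s>t/2$ (where $s^2/(t-s)\ge t^2/(4(t-s))$) is at most $Ct^{3\gamma/2+1/2}e^{-c't/2}$. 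Dividing by $t^{3\gamma/4}e^{-(1+\gamma^2)t}$ and sending $t\to\infty$ then $A\to\infty$ gives \eqref{largebadtimec}. (Alternatively one could split $u(y,s)^2$ according to the four regimes of Lemma~\ref{DSbdu}, as in the proof of Lemma~\ref{largetime}; the role of $g_\alpha$ is then played by $g_{-\gamma}(u)=(1-u)+\tfrac{(\gamma+u)^2}{1-u}$, which satisfies $g_{-\gamma}'(u)(1-u)^2=2u(2-u)$, so $g_{-\gamma}(0)=1+\gamma^2$, $g_{-\gamma}'(0)=0$, $g_{-\gamma}''(0)=4$ — the vanishing of the first derivative being precisely what produces the anomalous power $t^{3\gamma/4}$.)

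The main obstacle is the critical band $y\approx m_s$, i.e.\@ a branching position $\approx-\sqrt2\gamma t-m_{t-\tau}$: there the $o(t)$ slippage in Lemma~\ref{DSbdu} is too crude — in the critical case the coefficient $(\alpha+\gamma)^2$ that made the analogous estimate in Lemma~\ref{largetime} work degenerates to $0$ — so the sharper non-asymptotic bound of Lemma~\ref{CHbdu} must be used there instead; and one has to track the logarithmic corrections in $m_s$ and the $(t-s)^{-1/2}$ singularity near $s=t$ precisely enough to see that the surviving mass is of order exactly $t^{3\gamma/4}e^{-(1+\gamma^2)t}$, with a prefactor of the form $\int_A^\infty r^{3\gamma/2}e^{-cr^2}\,\dd r$ that vanishes as $A\to\infty$.
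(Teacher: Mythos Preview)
Your proof is correct. The overall strategy matches the paper's --- both split the $y$-integration around $m_s$, use $u\le 1$ above and Lemma~\ref{CHbdu} below, and both identify that the main contribution comes from $s\in[\sqrt t/A,A\sqrt t]$ via the factor $s^{3\gamma/2}e^{-cs^2/t}$ --- but your organisation is cleaner. Your exact identity
\[
  e^{-(t-s)}\,\tfrac{e^{-(y+\sqrt2\gamma t)^2/(2(t-s))}}{\sqrt{2\pi(t-s)}}
  = e^{-(1+\gamma^2)t}\,\tfrac{e^{-\sqrt2\gamma y}\,e^{(1-\gamma^2)s}}{\sqrt{2\pi(t-s)}}\,e^{-(y+\sqrt2\gamma s)^2/(2(t-s))}
\]
pulls the leading factor $e^{-(1+\gamma^2)t}$ out immediately, so all the arithmetic happens once; the paper instead reconstructs this factor case by case, carrying the function $g_{-\gamma}$ and an extra threshold parameter $K$ (it splits at $y=m_s-K$ rather than $y=m_s$) and a further split of the $s$-range at $\epsilon t$ (rather than your $t/2$). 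For \eqref{smallbadtimec} your reduction to the single quantity $I_s=\int e^{-\sqrt2\gamma y}u(y,s)^2\,\dd y$ and the observation $e^{(1-\gamma^2)s}I_s\le C_\delta s^{3\gamma/2}$ is particularly tidy. One small comment: what you call ``completion of the square'' for the $y<m_s$ part is really a split of the $z$-integral (e.g.\ at $z=M/2$, with $M=m_s+\sqrt2\gamma s$), since a direct completion of the square produces $e^{\lambda^2(t-s)/2}$ which blows up for small $s$; your stated conclusion --- a main term $e^{-cM^2/(t-s)}$ plus an ``exponentially small remainder'' $e^{-\lambda M/2}$ --- is exactly what that split yields, so the argument goes through.
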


\begin{lemma}\label{badpositionc}
For any $A>0$ fixed,
\begin{align}
  \label{badsmallpositionc}
  &\lim_{K\to\infty}\lim_{t\to\infty}\frac{e^{(1+\gamma^2)t}}{t^{3\gamma/4}}U_2(-\sqrt{2}\gamma t, t, [\frac{1}{A}\sqrt{t}, A\sqrt{t}], (-\infty,-K])=0\\
  \label{badlargepositionc}
  \text{and }\quad & \lim_{K\to\infty}\lim_{t\to\infty}\frac{e^{(1+\gamma^2)t}}{t^{3\gamma/4}}U_2(-\sqrt{2}\gamma t, t, [\sqrt{t}/{A}, A\sqrt{t}], [K,\infty))=0.
\end{align}
\end{lemma}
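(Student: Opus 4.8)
\medskip
\noindent\textbf{Proof plan for Lemma~\ref{badpositionc}.} Since $U_2$ carries no test function here, this is the critical-case analogue of Lemma~\ref{goodtimebadposition+}: on the event that $t-\tau$ already lies in $[\tfrac1A\sqrt t,A\sqrt t]$, we must localise the splitting position $X_\emptyset(\tau)$ near $-\sqrt2\gamma t+m_{t-\tau}$. Writing $\mu_s:=m_s+\sqrt2\gamma t$, the plan is, for $B=(-\infty,-K]$ and $B=[K,\infty)$, to bound the inner $z$-integral in $U_2(-\sqrt2\gamma t,t,[\tfrac1A\sqrt t,A\sqrt t],B)$ by $h(K)\,e^{-(t-s)-\mu_s^2/(2(t-s))}(2\pi(t-s))^{-1/2}$ with $h(K)\to0$ as $K\to\infty$, and then to integrate over $s$. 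The $s$-integral is the same Laplace computation as in Lemma~\ref{goodtimec}: with $s=r\sqrt t$ one gets $\int_{\sqrt t/A}^{A\sqrt t}e^{-(t-s)-\mu_s^2/(2(t-s))}\,\dd s/\sqrt{2\pi(t-s)}\sim \tfrac{1}{\sqrt{2\pi}}\,e^{-(1+\gamma^2)t}t^{3\gamma/4}\int_{1/A}^A r^{3\gamma/2}e^{-2r^2}\,\dd r$, so that $\limsup_{t\to\infty}\tfrac{e^{(1+\gamma^2)t}}{t^{3\gamma/4}}U_2(-\sqrt2\gamma t,t,[\tfrac1A\sqrt t,A\sqrt t],B)\le C(A)\,h(K)$, which tends to $0$ as $K\to\infty$.

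The crucial ingredient is a uniform expansion of the Gaussian kernel. Using the identities $\gamma^2+2\gamma=1$, $(1+\gamma)^2=2$ and $1+\gamma^2=2\sqrt2\gamma$ (all equivalent to $\gamma=\sqrt2-1$) to kill the $O(s)$ term, a Taylor expansion in $s/t$ gives, uniformly for $s\in[\tfrac1A\sqrt t,A\sqrt t]$, that $(t-s)+\mu_s^2/(2(t-s))=(1+\gamma^2)t+2s^2/t-\tfrac{3\gamma}{2}\log s+o(1)$, and likewise $\mu_s/(t-s)\to\sqrt2\gamma$; in particular $\mu_s/(t-s)\ge\sqrt2\gamma(1-\delta)$ for any fixed $\delta>0$ once $t$ is large. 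Expanding $(z+\mu_s)^2=\mu_s^2+2z\mu_s+z^2$ and dropping $e^{-z^2/(2(t-s))}\le1$, the inner integral over $B$ is at most $e^{-\mu_s^2/(2(t-s))}(2\pi(t-s))^{-1/2}\int_B e^{c_s z}\,u(m_s+z,s)^2\,\dd z$, where $c_s$ is a coefficient that converges uniformly in $s$ (its value depending on which bound for $u^2$ is used).

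For $B=[K,\infty)$ we take $u(m_s+z,s)^2\le1$ and $c_s=-\mu_s/(t-s)\le-\sqrt2\gamma(1-\delta)$, so $\int_K^\infty e^{c_s z}\,\dd z\le(\sqrt2\gamma(1-\delta))^{-1}e^{-\sqrt2\gamma(1-\delta)K}$, which yields \eqref{badlargepositionc}. For $B=(-\infty,-K]$, fix $\delta\in(0,\tfrac12)$ and $K>K_\delta$; since $s\ge\tfrac1A\sqrt t\ge T_\delta$ for $t$ large, Lemma~\ref{CHbdu} gives $u(m_s+z,s)^2\le c_\delta^2\,e^{2\sqrt2\gamma(1-\delta)z}$ for all $z\le-K$, so the effective coefficient becomes $c_s=2\sqrt2\gamma(1-\delta)-\mu_s/(t-s)\to\sqrt2\gamma(1-2\delta)>0$; hence $c_s\ge\kappa>0$ for $t$ large and $\int_{-\infty}^{-K}e^{c_s z}u(m_s+z,s)^2\,\dd z\le c_\delta^2\kappa^{-1}e^{-\kappa K}$, which yields \eqref{badsmallpositionc}.

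The only genuinely delicate point is making the kernel expansion \emph{uniform} in $s\in[\tfrac1A\sqrt t,A\sqrt t]$ — in particular keeping track of the $\tfrac{3\gamma}{2}\log s$ correction produced by the logarithmic part of $m_s$ — and, for the left tail, ensuring that enough of the quadratic Gaussian exponent survives to dominate the slow exponential growth $e^{-2\sqrt2\gamma(1-\delta)z}$ of $u(m_s+z,s)^{-2}$ as $z\to-\infty$; this is exactly where $1+\gamma^2=2\sqrt2\gamma$ and the sharpness of Lemma~\ref{CHbdu} (with arbitrarily small loss $\delta$) enter, just as in the proof of Lemma~\ref{goodtimebadposition+}. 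Everything else is routine bookkeeping.
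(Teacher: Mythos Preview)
Your proposal is correct and follows essentially the same approach as the paper: bound $u^2$ by $1$ on $[K,\infty)$ and by Lemma~\ref{CHbdu} on $(-\infty,-K]$, extract exponential decay in $K$ from the $z$-integral, and then recover the $t^{3\gamma/4}e^{-(1+\gamma^2)t}$ factor from the $s$-integral via the same expansion as in Lemma~\ref{goodtimec}. The only technical difference is that the paper keeps the full Gaussian in $z$ and invokes the tail bound~\eqref{basic} (producing expressions like $\frac{\sqrt{t-s}}{K+\mu_s}e^{-(K+\mu_s)^2/(2(t-s))}$), whereas you simply discard $e^{-z^2/(2(t-s))}\le 1$ and integrate a pure exponential; your route is slightly cleaner and yields the same $e^{-\sqrt{2}\gamma(1-2\delta)K}$ decay.
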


\begin{proof}[Proof of Lemma \ref{badtimec}] \textbf{Proof of \eqref{largebadtimec}:} Observe that
\begin{align}\label{U2F}
&U_2(-\sqrt{2}\gamma t, t, [A\sqrt{t},t])\cr&\quad=
U_2(-\sqrt{2}\gamma t, t, [A\sqrt{t},t], [-K,\infty)])+
U_2(-\sqrt{2}\gamma t, t, [A\sqrt{t},t], (-\infty,-K))
\cr&\quad=:U_{\eqref{U2F}a}+U_{\eqref{U2F}b}.
\end{align}
As $u(m_s+z,s)\leq 1$, one sees that
\begin{align*}
U_{\eqref{U2F}a}\leq &\int_{A\sqrt{t}}^t\dd s\int_{-K}^\infty\dd z \frac{e^{-(t-s)-\frac{(z+m_s+\sqrt{2}\gamma t)^2}{2(t-s)}}}{\sqrt{2\pi (t-s)}}\\
=&\int_{A\sqrt{t}}^t e^{-(t-s)}\dd s\int_{-K+m_s+\sqrt{2}\gamma t}^\infty \frac{e^{-\frac{z^2}{2(t-s)}}}{\sqrt{2\pi (t-s)}},
\end{align*}
which by \eqref{basic} is bounded by
\begin{align*}
&\int_{A\sqrt{t}}^t e^{-(t-s)}\frac{\sqrt{t-s}}{-K+m_s+\sqrt{2}\gamma t}e^{-\frac{(K+m_s+\sqrt{2}\gamma t)^2}{2(t-s)}}\dd s\\
\leq &c_4\frac{e^{-(1+\gamma^2)t}}{\sqrt{t}}\int_{A\sqrt{t}}^t e^{-\frac{2s^2}{t-s}+(\sqrt{2}K+\frac{3}{2}\log s)\frac{s+\gamma t}{t-s}}\dd s.
\end{align*}
For $t$ large enough, one has
\[
\left(\sqrt{2}K+\frac{3}{2}\log s\right)\frac{s+\gamma t}{t-s}\leq
\begin{cases} \frac{s^2}{t-s}, &\text{if }s\in [\sqrt{t}\log t, t];\cr
\frac{3\gamma}{2}\log s+2\sqrt{2} K+\frac{3\sqrt{2}(\log t)^2}{\sqrt{t}}, &\text{if }s\in[A\sqrt{t}, \sqrt{t}\log t],
\end{cases}
\]
which implies that
\begin{align*}
U_{\eqref{U2F}a}\leq & c_5\tfrac{e^{-(1+\gamma^2)t}}{\sqrt{t}} \left(\int_{\sqrt{t}\log t}^te^{-\frac{s^2}{t-s}}\dd s
+e^{2\sqrt{2}K}\int_{A\sqrt{t}}^{\sqrt{t}\log t} s^{3\gamma/2}e^{-\frac{2s^2}{t}}\dd s\right)
=o_t\left(\tfrac{e^{-(1+\gamma^2)t}}{\sqrt{t}}\right).
\end{align*}
On the other hand, for $s$ sufficiently large and $z<-K$, by similar reasonings as in Lemma \ref{small2ndtermbadtime}, we have
for $\delta\in(0,1/2],\, \eta=1-2\delta,\,\epsilon< \frac{\gamma\eta}{1+2\gamma(1-\delta)}$,
\begin{align*}
U_{\eqref{U2F}b}&\leq
c_\delta^2\int_{A\sqrt{t}}^{\epsilon t} \frac{\sqrt{t-s}e^{-(t-s)-\frac{(-\sqrt{2}\gamma t-m_s+K)^2}{2(t-s)}}}{-2s+\sqrt{2}\gamma\eta(t-s)}\dd s\cr&\qquad+c_\delta^2\int_{\varepsilon t}^t e^{-(t-s)(1-\gamma^2(1+\eta)^2)-\sqrt{2}\gamma(1+\eta)(m_s-\sqrt{2}\gamma t)}\dd s\cr
&=:U_{\eqref{U2F}b1}+U_{\eqref{U2F}b2},
\end{align*}
that we bound separately.

Note that
\begin{align*}
U_{\eqref{U2F}b2}
&=\int_\varepsilon^1 (ut)^{3\gamma(1+\eta/2)}t e^{-t[(1-u)(1-\gamma^2(1+\eta)^2)+2\gamma(1+\eta)(u+\gamma)]}\dd u\cr
&\leq  t^{\frac{3\gamma}{2}(1+\eta)+1} e^{-t\min_{u\in[\varepsilon,1]}[(1-u)(1-\gamma^2(1+\eta)^2)+2\gamma(1+\eta)(u+\gamma)]}.
\end{align*}
One can check that
\begin{align*}
&\min_{u\in[\varepsilon,1]}[(1-u)(1-\gamma^2(1+\eta)^2)+2\gamma(1+\eta)(u+\gamma)]
\cr&\quad=
(1+\gamma^2)+\varepsilon(1+\gamma^2)\eta-\eta^2\gamma^2(1-\varepsilon).
\end{align*}
Take $\varepsilon\in (\frac{\eta\gamma^2}{1+\gamma^2},\frac{\gamma \eta}{1+2\gamma(1-\delta)})$ as $\eta=1-2\delta\in(0,1)$. Then,
\[
U_{\eqref{U2F}b2}\leq t^{\frac{3\gamma}{2}(1+\eta)+1} e^{-t(1+\gamma^2+\varepsilon\eta^2\gamma^2)}=o_t(1)t^{3\gamma/4}e^{-t(1+\gamma^2)}.
\]
It remains to bound $U_{\eqref{U2F}b1}$. Recalling \eqref{galpha}, we observe that
\begin{align*}
U_{\eqref{U2F}b1}
\leq &\frac{C_{\delta,\epsilon}^{(7)}}{\sqrt{t}}\int_{A\sqrt{t}}^{\epsilon t} e^{-(t-s)-\frac{(-\sqrt{2}\gamma t-m_s+K)^2}{2(t-s)}}\dd s\\
\leq & C_{\delta,\epsilon}^{(7)}e^{C^{(8)}_{\delta,\epsilon}K}\sqrt{t}\int_{\frac{A}{\sqrt{t}}}^\epsilon e^{-tg_{-\gamma}(u)+\frac{3}{2}\log(ut)\frac{u+\gamma }{1-u}}\dd u,
\end{align*}
where we use the fact that for $s\in [{A\sqrt{t}},\, {\epsilon t}], $
\begin{align}\label{expanation}\frac{(-\sqrt{2}\gamma t-m_s+K)^2}{2(t-s)}&=\frac{2(\gamma t+s)^2+(\frac{3}{2\sqrt{2}}\log s+K)^2-2\sqrt{2}(\gamma t+s)(\frac{3}{2\sqrt{2}}\log s+K)}{2(t-s)}\cr
&\geq \frac{(\gamma t+s)^2}{t-s}-\frac{3(\gamma t+s)\log s}{2(t-s)}-\frac{\sqrt{2}\gamma K}{(1-\epsilon)}. \end{align}
Since
$g_{-\gamma}(u)=1+\gamma^2+2u^2+o(u^2),$ as $u\downarrow0$, then
\[
 \sqrt{t}\int_{\frac{\log t}{\sqrt{t}}}^\epsilon e^{-tg_{-\gamma}(u)+\frac{3}{2}\log(ut)\frac{u+\gamma }{1-u}}\dd u \leq \sqrt{t}\int_{\frac{\log t}{\sqrt{t}}}^\epsilon (ut)^{3(\epsilon+\gamma)/2}e^{-u^2t-(1+\gamma^2)t}\dd u,
\]
which is $o_t(1)t^{3\gamma/4}e^{-(1+\gamma^2)t}$. For $u\in[\frac{A}{\sqrt{t}},\frac{\log t}{\sqrt{t}}]$, $\log (ut)\frac{u+\gamma}{1-u}= \gamma\log (ut)+o_t(1)$. Therefore,
\begin{multline*}
\sqrt{t}\int_{\frac{A}{\sqrt{t}}}^{\frac{\log t}{\sqrt{t}}} e^{-tg_{-\gamma}(u)+\frac{3}{2}\log(ut)\frac{u+\gamma }{1-u}}\dd u
\leq  e\int_{\frac{A}{\sqrt{t}}}^{\frac{\log t}{\sqrt{t}}} (ut)^{3\gamma/2}e^{-t(1+\gamma^2)-u^2t}\dd u\\
  \leq et^{3\gamma/4}e^{-(1+\gamma^2)t}\int_{A}^{\log t} x^{3\gamma/2}e^{-2x^2}\dd x = o_A(1)t^{3\gamma/4}e^{-(1+\gamma^2)t}.
\end{multline*}
We have completed the proof of \eqref{largebadtimec}.

\textbf{Proof of \eqref{smallbadtimec}:} We have
\begin{align}\label{U2G}
&U_2(-\sqrt{2}\gamma t, t, [0,{\sqrt{t}}/{A}])\cr&\quad=U_2(-\sqrt{2}\gamma t, t, [0,{\sqrt{t}}/{A}],[-K,\infty))+U_2(-\sqrt{2}\gamma t, t, [0,{\sqrt{t}}/{A}],[-\infty, -K])
\cr&\quad=:U_{\eqref{U2G}a}+U_{\eqref{U2G}b}.
\end{align}
As $u(m_s+a,s)\leq 1$, applying \eqref{basic} gives that for $t$ large enough,
\begin{align*}
U_{\eqref{U2G}a}&\leq \int_0^{\sqrt{t}/A} \frac{\sqrt{t-s}}{-K+m_s+\sqrt{2}\gamma t}e^{-(t-s)-\frac{(m_s+\sqrt{2}\gamma t-K)^2}{2(t-s)}}\dd s\cr
&\leq c_7e^{\sqrt{2}K}e^{-(1+\gamma^2)t}\int_0^{\sqrt{t}/A} s^{3\gamma/2}e^{-\frac{2s^2}{t}}\frac{\dd s}{\sqrt{t}}
\cr&=c_7e^{\sqrt{2}K}t^{3\gamma/4}e^{-(1+\gamma^2)t}\int_0^{1/A}u^{3\gamma/2}e^{-2u^2}\dd u,
\end{align*}
which is $o_A(1)t^{3\gamma/4}e^{-(1+\gamma^2)t}$. Similarly as $U_{\eqref{U2F}b1}$,
\begin{align*}
U_{\eqref{U2G}b}\leq &C^{(12)}_{\delta,\epsilon} e^{C^{(11)}_{\delta,\epsilon}K}\sqrt{t}\int_0^{\frac{1}{A\sqrt{t}}} e^{-tg_{-\gamma}(u)+\frac{3\gamma}{2}\log(ut)\frac{u+\gamma }{1-u}}\dd u\\
\leq & C^{(1)}_{\delta,\epsilon,K} t^{3\gamma/4}e^{-(1+\gamma^2)t}\int_0^{\frac{1}{A\sqrt{t}}} (u\sqrt{t})^{3\gamma/2}e^{-u^2t}\sqrt{t}\dd u\\
=& c_8 t^{3\gamma/4}e^{-(1+\gamma^2)t}\int_0^{1/A}u^{3\gamma/2}e^{-u^2}\dd u=o_A(1)t^{3\gamma/4}e^{-(1+\gamma^2)t},
\end{align*}
concluding \eqref{smallbadtimec}.
\end{proof}

\begin{proof}[Proof of Lemma \ref{badpositionc}]
\textbf{Proof of \eqref{badsmallpositionc}:}
Take $\delta\in(0,1/3)$ and $\eta=1-2\delta$. By similar reasoning as above, we have
\begin{align*}
&\int_{\frac{\sqrt{t}}{A}}^{A\sqrt{t}}\dd s\int_{-\infty}^{-K}\dd z \frac{e^{-(t-s)-\frac{(z+m_s-\sqrt{2}ct)^2}{2(t-s)}}}{\sqrt{2\pi (t-s)}}u^2(m_s+z,s)\\\leq &c_\delta^2\int_{\frac{\sqrt{t}}{A}}^{A\sqrt{t}} \frac{\sqrt{t-s}e^{-(t-s)-\frac{(-\sqrt{2}\gamma t-m_s+K)^2}{2(t-s)}-\sqrt{2}\gamma(1+\eta)K}}{-2s+\sqrt{2}\gamma\eta(t-s)}\dd s\\
\leq & C^{(1)}_{\delta, \gamma, A}t^{3\gamma/4}e^{-K\sqrt{2}\gamma(1-3\delta)} \int_{\frac{\sqrt{t}}{A}}^{A\sqrt{t}}\frac{1}{\sqrt{t}}e^{-(t-s)-\frac{(s+\gamma t)^2}{t-s}}\dd s
\cr\leq& C^{(2)}_{\delta, \gamma, A} e^{-K\sqrt{2}\gamma(1-2\delta)}t^{3\gamma/4} e^{-(1+\gamma^2)t},
\end{align*}
where   for the second inequality, we used the fact that for $s\in [{\frac{\sqrt{t}}{A}},\, {A\sqrt{t}}], $
\begin{align*}\frac{(-\sqrt{2}\gamma t-m_s+K)^2}{2(t-s)}
&\geq \frac{(\gamma t+s)^2}{t-s}-\frac{3(\gamma t+s)\log s}{2(t-s)}-\frac{\sqrt{2}(\gamma t+s)\gamma K}{(t-s)}
\cr&\geq \frac{(\gamma t+s)^2}{t-s}-\frac{3\gamma}{4}\log t-\sqrt{2}\gamma^2K+o_t(1).
\end{align*}
and the last inequality follows from the fact that $(t-s)+\frac{(s+\gamma t)^2}{t-s}=(1+\gamma^2)t+\frac{(1+\gamma)^2s^2}{t-s}$.

\textbf{Proof of \eqref{badlargepositionc}:}
 For $z\geq K$, using the fact $u(m_s+z,s)\leq 1$, we obtain that
\begin{align*}
U_2(-\sqrt{2}\gamma t, t, [\sqrt{t}/{A}, A\sqrt{t}], [K,\infty))\leq & \int_{\frac{\sqrt{t}}{A}}^{A\sqrt{t}}\dd s\int_K^{\infty}\dd z \frac{e^{-(t-s)-\frac{(z+m_s+\sqrt{2}\gamma t)^2}{2(t-s)}}}{\sqrt{2\pi (t-s)}}\\
=&\int_{\frac{\sqrt{t}}{A}}^{A\sqrt{t}}e^{-(t-s)}\dd s\int_{K+m_s+\sqrt{2}\gamma t}^{\infty}\dd z \frac{e^{-\frac{z^2}{2(t-s)}}}{\sqrt{2\pi (t-s)}},
\end{align*}
which by \eqref{basic} is less than
\begin{align*}
\int_{\frac{\sqrt{t}}{A}}^{A\sqrt{t}}e^{-(t-s)}\tfrac{\sqrt{t-s}}{K+m_s+\sqrt{2}\gamma t}e^{-\frac{(K+m_s+\sqrt{2}\gamma t)^2}{2(t-s)}}\dd s
\leq \frac{C_{\gamma, A}}{\sqrt{t}} \int_{\frac{\sqrt{t}}{A}}^{A\sqrt{t}}e^{-(t-s)-\frac{(m_s+\sqrt{2}\gamma t)^2}{2(t-s)}-K\frac{\sqrt{2}\gamma t+m_s}{t-s}}\dd s.
\end{align*}
Similarly as above, we end up with
\[
U_2(-\sqrt{2}\gamma t, t, [\sqrt{t}/{A}, A\sqrt{t}], [K,\infty))\leq C_{\gamma, A} e^{-K\sqrt{2}\gamma} t^{3\gamma/4}e^{-(1+\gamma^2)t}.
\]
This suffices to conclude \eqref{badlargepositionc}.
\end{proof}

\newpage

\noindent{\textsc{Xinxin Chen}}

\noindent{Institut Camille Jordan, C.N.R.S. UMR 5208, Universite Claude Bernard Lyon 1, 69622 Villeurbanne Cedex, France.}

\noindent{E-mail: {\tt xchen@math.univ-lyon1.fr}}

\bigskip

\noindent{\textsc{Hui He}}

\noindent{School of Mathematical Sciences, Beijing Normal University,
Beijing 100875, People's Republic of China.}

\noindent{E-mail: {\tt hehui@bnu.edu.cn}}

\bigskip 

\noindent{\textsc{Bastien Mallein}}

\noindent{Université Sorbonne Paris Nord, LAGA, UMR 7539, F-93430, Villetaneuse, France.}

\noindent{E-mail: {\tt mallein@math.univ-paris13.fr}}

\end{document}